\makeatletter \renewcommand{\fnum@figure}{Fig. \thefigure} \makeatother
\newtheorem{theorem}{Theorem}[section]
\newtheorem{corollary}[theorem]{Corollary}
\newtheorem{definition}[theorem]{Definition}
\newtheorem{example}[theorem]{Example}
\newtheorem{conjecture}[theorem]{Conjecture}
\newtheorem{lemma}[theorem]{Lemma}
\newtheorem{observation}{Observation}
\newtheorem{proposition}[theorem]{Proposition}
\newtheorem{remark}[theorem]{Remark}
\newtheorem{question}[theorem]{Question}
\numberwithin{equation}{section}
\numberwithin{figure}{section}
\begin{document}
\title{The Domino Problem of the Hyperbolic Plane for Regular Polygons}
\author{Arun Maiti} 
\providecommand{\keywords}[1]{\textit{Keywords:} #1}
\maketitle
\begin{abstract}
We provide a definitive classification of all finite sets of regular polygons that admit a tiling of the hyperbolic plane, thereby establishing the decidability of the Domino Problem for this class of prototiles. We show that admissibility is determined by a finite set of local and inductive combinatorial constraints. This classification further leads to the discovery of the first known examples of weakly aperiodic protosets consisting of regular polygons in $\mathbb{H}^2$.
 \end{abstract}

\keywords{hyperbolic tilings, domino problems, homogeneous tilings, pseudo-homogeneous tilings, aperiodic tiles, Heesch problem}

\section{Introduction} \label{intro}

The classification of tilings by regular polygons has historically been a well studied subject within discrete geometry, famously beginning with the five Platonic solids and the eleven Archimedean tilings of the Euclidean plane. In Euclidean space, the possibility of such tilings is strictly limited by the fact that the interior angles of a regular $p$-gon are fixed at $\pi(p-2)/p$. Consequently, the vertex-type--the cyclic sequence of polygons meeting at a point--must satisfy a rigid interior angle sum of exactly $2\pi$, a constraint that allows only a handful of polygon combinations.
\par
In contrast, the hyperbolic plane ($\mathbb{H}^2$) offers a vastly larger variety for tilings by regular polygons. This geometric freedom arises because the interior angle of a regular hyperbolic $p$-gon is not constant; it is a strictly monotonic function of its side length $L$. This allows for the existence of a unique side length $L_0$ such that any combination of regular polygons satisfying a simple condition can be metrically realized around a single vertex to meet edge-to-edge.
\\
However, the ability to fit a set of polygons around a single vertex is merely a local necessary condition and does not guarantee the existence of a global tiling. The challenge is in ensuring that a patch of polygons can be expanded infinitely without eventually encountering a combinatorial mismatch or a \enquote{dead-end} where no available polygon from the set can fit the remaining gap.
\par
A finite collection of polygons in $\mathbb{H}^2$ with matching side lengths will be referred to as a \textit{protoset} here. For a certain class of protosets of regular polygons, admissibility of a tiling can be derived using elementary combinatorics or from known results on homogeneous tilings \cite{GS, DG18}. However, a complete classification requires a systematic approach that has been missing from the literature. To fill that gap we introduce the notion of pseudo-homogeneous tilings and develop tools to classify them, which in turn provides a complete resolution to the classification problem.
 \par
The \textit{type} of a vertex in a tiling is defined to be the cyclic sequence of the sizes (number of sides) of the polygons incident to a vertex. A vertex type and its mirror image are considered to be the same. A tiling of a surface is called \textit{homogeneous} (also known as Archimedean and semi-regular) if all vertices have the same type. It is called \textit{pseudo-homogeneous} if the vertex types are the same up to a permutation. 
The \textit{type} of a homogeneous (resp. pseudo-homogeneous) tiling is defined to be the type of its vertices (resp. an unordered tuple). By a slight abuse of notation, we will use the same symbol to denote a tuple and its cyclic equivalence class (a cyclic tuple). 

A tiling of a given type is assumed to be pseudo-homogeneous throughout this paper unless stated otherwise. 
\subsection{Pseudo-homogeneous tilings}

For a pseudo-homogeneous tiling of $\mathbb{H}^2$ of type $\mathfrak{k}$ by regular polygons, the common edge length is uniquely (up to isometry) determined by the tuple $\mathfrak{k}$ whenever $k_i<\infty$, $\vartheta(\mathfrak{k}) > 2$; see Lemma 2.1 in \cite{DG18}. This essentially allows us to view pseudo-homogeneous tilings of $\mathbb{H}^2$ as topological tilings of the plane (faces with continuous curved sides replacing regular tiles), and vice-versa: a pseudo-homogeneous tiling of the plane of type $\mathfrak{k}$ satisfying $\vartheta(\mathfrak{k}) > 2$ (or $\vartheta(\mathfrak{k}) = 2$) can be realized as a geometric tiling of $\mathbb{H}^2$ (respectively of $\mathbb{E}^2$) of type $\mathfrak{k}$ by use of the Cartan-Hadamard theorem; see Lemma 2.5 in \cite{DG18}. Thus, the classification of protosets of regular polygons that admit a pseudo-homogeneous tiling of $\mathbb{H}^2$ amounts to answering the following question:
\begin{question} \label{questionps} Given a tuple $\mathfrak{k}=[k_1, k_2,$ $ \cdots, k_d]$ with $3 \leq k_i < \infty$ satisfying $\vartheta(\mathfrak{k}) \geq 2$, is there a pseudo-homogeneous tiling of the plane of type $\mathfrak{k}$? 
\end{question}
Our answer to the above question is primarily an exhaustive verification of all tuples. For tuples that admit a tiling of the plane, a tiling is constructed inductively layer-by-layer. The zero-\textit{layer}, $X_0$, is a point on the plane, and next, by induction, $k$-layered tiling $X_k$ is a tiling of $D^k(X_0)$ (the disc of radius $k$ centered at $X_0$) such that no vertex lies in the annulus region $D^k(X_0) \setminus D^{k-1}(X_0)$; see Fig. \ref{tiling4455}. The boundary of layer $X_k$, $\partial X_k$ consists of edges of tiling. The inductive construction involves prescribing a rule to attach faces to $\partial X_k$ in circular order (taken to be clockwise) to form layer $X_K$ that applies for any $k$. 
\begin{figure}[H]
\centering
\begin{tikzpicture} [scale=0.9]
 \def\rThree{3cm} 
 \def\rTwo{2.2cm} 
 \def\rOne{1.2cm} 
 \draw (0,0) circle (\rTwo);

 \draw (0,0) circle (\rOne);

 \node[fill=black,circle,inner sep=1.5pt] (V) at (0,0) {}; 

 \draw (V) -- (-10:\rTwo); \fill (-10:\rTwo) circle (1.5pt); 
 
 \draw (V) -- (80:\rTwo); \fill (80:\rTwo) circle (1.5pt); 

 \draw (V) -- (240:\rTwo); \fill (240:\rTwo) circle (1.5pt); 
 \draw (195:\rOne) -- (180:\rTwo); \fill (195:\rOne) circle (1.5pt);
 \draw (195:\rOne) -- (210:\rTwo); 

 \draw (270:\rOne) -- (280:\rTwo); \fill (280:\rTwo) circle (1.5pt); \fill (270:\rOne) circle (1.5pt);
 \draw (270:\rOne) -- (250:\rTwo); \fill (250:\rTwo) circle (1.5pt);
 \draw (310:\rOne) -- (300:\rTwo); \fill (300:\rTwo) circle (1.5pt); \fill (310:\rOne) circle (1.5pt);
 \draw (310:\rOne) -- (330:\rTwo); \fill (330:\rTwo) circle (1.5pt);
 \draw (120:\rOne) -- (100:\rTwo); \fill (120:\rOne) circle (1.5pt); \fill (100:\rTwo) circle (1.5pt);
 \draw (120:\rOne) -- (140:\rTwo); \fill (140:\rTwo) circle (1.5pt);
 
 \draw (60:\rOne) -- (70:\rTwo); \fill (70:\rTwo) circle (1.5pt); \fill (60:\rOne) circle (1.5pt);
 \draw (60:\rOne) -- (50:\rTwo); \fill (50:\rTwo) circle (1.5pt);
 \draw (20:\rOne) -- (00:\rTwo); \fill (00:\rTwo) circle (1.5pt); \fill (20:\rOne) circle (1.5pt);
 \draw (20:\rOne) -- (30:\rTwo); \fill (30:\rTwo) circle (1.5pt);
 \fill (-10:\rOne) circle (1.5pt);

 \draw (V) -- (160:\rTwo); \fill (160:\rTwo) circle (1.5pt); 

 \fill (170:\rTwo) circle (1.5pt); 


 \fill (210:\rTwo) circle (1.5pt); 
 \fill (160:\rTwo) circle (1.5pt); 
 \fill (240:\rTwo) circle (1.5pt); 
 \fill (180:\rTwo) circle (1.5pt); 
 
 \fill (220:\rTwo) circle (1.5pt); \fill (250:\rTwo) circle (1.5pt); \fill (260:\rTwo) circle (1.5pt); \fill (270:\rTwo) circle (1.5pt); \fill (310:\rTwo) circle (1.5pt); \fill (320:\rTwo) circle (1.5pt); \fill (195:\rTwo) circle (1.5pt); \fill (75:\rTwo) circle (1.5pt); \fill (80:\rTwo) circle (1.5pt); \fill (10:\rTwo) circle (1.5pt);
\fill (150:\rTwo) circle (1.5pt); \fill (130:\rTwo) circle (1.5pt); \fill (115:\rTwo) circle (1.5pt);


 \fill (160:\rOne) circle (1.5pt);
 \fill (240:\rOne) circle (1.5pt);
 \fill (80:\rOne) circle (1.5pt);
 \fill (15:\rTwo) circle (1.5pt);

 \fill (50:\rTwo) circle (1.5pt);
 \fill (300:\rTwo) circle (1.5pt);

\path (110:\rTwo) arc (110:70:\rTwo) 
 node[pos=0.5, sloped, above, fill=white, inner sep=1pt, font=\scriptsize] {$\partial X_2$};

\path (110:\rOne) arc (110:70:\rOne) 
 node[pos=0.35, sloped, above, fill=white, inner sep=1pt, font=\scriptsize] {$\partial X_1$};

\node[fill=black,circle,inner sep=0.2pt,label={[font=\scriptsize]below:$X_0$}] (X) at (0.1,0) {};

\end{tikzpicture}
\caption{Two layers of tiling of type $[4, 4, 5, 5]$}
\label{tiling4455}
\end{figure}

It turns out that the existence is straightforward for the class of all $6$-tuples (tuples of size $6$) and $5$-tuples without triangles. This is because of ample flexibility in permuting five or more faces to cover consecutive vertices along $\partial X_{k}$ while extending $X_k$ to $X_{k+1}$ irrespective of the tiling $X_i$ for $i<k-1$. A detailed construction of this is presented in Proposition \ref{pseudodeg5}, both for the sake of completeness and to prepare for the more intricate construction required later.
For $3$-tuples, the classification is also straightforward and known; see Proposition \ref{dg3} below. 
\par
However, it proved impossible to formulate a reasonably simple inductive hypothesis that applies to the remaining tuples that admit tilings, or to determine which tuples do not admit one, particularly when triangles are present. We therefore divide the tuples into subclasses according to the number of triangles they contain. For $5$-tuples with triangles and $4$-tuples without any triangles, a non-trivial inductive hypothesis is required for the layer-by-layer construction; see Proposition \ref{pseudo4prop}. This hypothesis must take into account not only the current layer and fans around each vertex but also two to three consecutive layers. 
\par
 The remaining -- and certainly the most interesting -- case of $4$-tuples with triangles is classified in Theorem \ref{431prop}. In this case, we take a two-pronged approach. On the one hand, we attempt to formulate an inductive hypothesis for the layer-by-layer construction; on the other hand, we track the combinatorial constraints that arise in constructing neighborhoods around each of the tiles as successive layers are added. Here, a \textit{neighborhood} of a face $P$ in a tiling is the cyclic sequence of faces adjacent to $P$ up to a cyclic rotation and reversal, and they are expressed as cyclic sequence of the sizes of the faces. Our approach fortunately yields a complete classification without requiring verification over too many layers. 
 \\
 For the existence part, in the special case of tuples with a single triangle (e.g., for tuples $[3, 5, p, p]$, $[3, 5, k_3, k_4]$), we augment the layers -- calling this the \textit{$t$-layer} -- to include all the faces adjacent to the triangles that are adjacent to the boundary of the previous layers. This helps us to formulate a significantly simpler inductive hypothesis for this class. For the remaining cases, the existence is established using known results on homogeneous tilings and applying basic tiling operations such as rectification, truncation, or suitable combinations thereof. 
\par
We then complete the classification of protosets of regular polygons by proving the following theorem in \S \ref{regtile}.
 
 \begin{theorem}\label{solv1} If a protoset of regular polygons admits a tiling of $\mathbb{H}^2$ then it has a subset that admits a pseudo-homogeneous tiling.
 \end{theorem}
 The proof is established by showing that if two vertex types, neither of which admits a tiling on its own, are combined (in a tiling of mixed vertex types), the resulting mixed-type arrangement also does not admit a tiling. This failure is due to either a combinatorial impasse or a mismatch in the side lengths of the associated regular polygons. The statement and proof method in this theorem can be extended to include apeirogons; however, due to the technical complexities, we defer this discussion to a future article.

\subsection{Aperiodicity in $\mathbb{H}^2$}
In \S \ref{aperiodic}, we examine an important variant of the domino problem in $\mathbb{H}^2$: the periodic domino problem, which asks whether a given protoset of tiles can produce a periodic tiling. In the hyperbolic plane, there are two distinct notions of periodicity: weak and strong. A tiling of $\mathbb{H}^2$ is called \textit{strongly periodic} if it quotients to a compact domain under the action of its symmetry group, and \textit{weakly periodic} if its symmetry group contains a subgroup of infinite cyclic symmetry \cite{GS05}. In $\mathbb{E}^2$, however, these two notions coincide. A set of hyperbolic tiles is called \textit{weakly aperiodic} (or strongly aperiodic) if none of the tilings of $\mathbb{H}^2$ by isometric copies of them is strongly periodic (respectively, weakly periodic).
\par
While numerous examples of protosets that admit periodic tilings can be found in the literature \cite{GS79}, these are typically vertex-transitive (also known as uniform) constructions, which are necessarily homogeneous. Only recently have examples of regular polygons that admit only tilings with multiple vertex orbits been presented in the author's work \cite{AM20} and in numerous informal notes of Marek \v{C}trn\'{a}ct. Nevertheless, the existence of a weakly aperiodic protoset of regular polygons in $\mathbb{H}^2$  has remained a long-standing open question in the field.
 \par
Here, in \S \ref{aperiodic}, we employ a double-counting argument to first show that there does not exist any weakly periodic tiling of type $[3, 5, k_3, k_4]$ for $10 \leq k_3 < k_4$, $k_3, k_4 \neq 11$. More precisely, the argument compares two different counts of incidences between triangles and pentagons that would arise in any strongly periodic tiling of this type. Combined with Theorem \ref{431prop}, which already provides an inductive construction of such tilings, this yields a bi-infinite family of aperiodic protosets of regular tiles: 
\[\{3, 5, k_3, k_4\}, 10 \leq k_3 <k_4, k_3 \neq 11\]
with side length chosen so that they form a complete configuration around a vertex.
\\
Tiling spaces associated to aperiodic tile sets are important objects in symbolic dynamics \cite{B13} non-commutative geometry \cite{KP00, OYO11} and in physics \cite{BDF20, ZY22, LZ22}. A few properties of the tiling space associated to aperiodic tile set $\{3, 5, k_3, k_4\}$ is studied in \S \ref{tilingspace} and also in Appendix \S \ref{appen1} for a representative class ($[3, 5, 12, 14]$). 
\par
In \S\ref{conclu}, we present a brief discussion of the implications of our results together with additional observations concerning a few unresolved problems.

\section{Pseudo-homogeneous tilings}\label{planar}
Let us first introduce a few definitions and notations that will be used throughout this paper. 
\begin{definition} \label{fan}
For a $d$-tuple $\mathfrak{k}=[k_1, k_2,$ $ \cdots, k_d]$, a \textit{fan} of type $\mathfrak{k}$ around a vertex $v$ is a configuration of $d$ faces (in any order) around the vertex with side counts $ k_1, k_2,$ $ \cdots, k_d$, such that all the edges incident to $v$ are shared by two faces. A \textit{partial fan} around $v$ of type $\mathfrak{k}$ is a configuration of faces around $v$ such that all but two edges incident to $v$ are shared by two faces, and can be extended to a fan of type $\mathfrak{k}$ around the vertex.
\end{definition}
The boundary $\partial X_k$ of the $k$-th layer $X_k$ consists of a cycle of edges. Consequently, the vertices on $\partial X_k$ have a natural cyclic ordering.

\begin{definition}\label{innerlayer}
Let $C_k$ denote the $k$-th corona of the tiling, defined as the cyclic sequence of faces in $X_k$ that share at least one vertex (or edge) with the boundary $\partial X_k$. Note that $X_k = X_{k-1} \cup C_k$.
\end{definition}

\begin{definition} \label{indegree}
We say that a vertex $x$ on $\partial X_k$ has \textit{in-degree} $r$ if the number of edges of the form $(x, y)$ with $y \in X_{k} \setminus \partial X_{k}$ is $r$. A vertex of in-degree $0$ will usually be referred to as a \textit{free} vertex.
\end{definition}

In what follows, we describe a procedure to extend the layer $X_k$ to $X_{k+1}$ by starting with a fan around a suitably chosen vertex on $\partial X_k$, and then (circularly) inductively construct fans around all the vertices on $\partial X_k$.

\subsection{Tilings of degree $\geq 5$ } 
 \begin{proposition} \label{pseudodeg5} 
For $d \geq 5$ and a $d$-tuple $\mathfrak{k}=[k_1, k_2,$ $ \cdots, k_d]$ with $\vartheta(\mathfrak{k}) \geq 2 $, there exists a tiling of the plane of type $\mathfrak{k}$.
\end{proposition}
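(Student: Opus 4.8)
The plan is to prove existence by the inductive layer-by-layer construction set up above, producing a chain $X_0 \subset X_1 \subset X_2 \subset \cdots$ whose union is the desired map of type $\mathfrak{k}$. The base case is to place a single fan of type $\mathfrak{k}$ around the central vertex $X_0$, which constitutes $X_1$. The bookkeeping device throughout is the in-degree: a boundary vertex $x \in \partial X_k$ of in-degree $r$ has exactly $r+1$ faces of $X_k$ meeting it, so to attain the full type it must receive exactly $d - r - 1$ further faces in the outward direction. The faces already present at $x$ form a sub-multiset $S_x$ of $\mathfrak{k}$, and the outward faces to be placed must realize the complementary multiset, which I will write informally as $\mathfrak{k}\setminus S_x$.

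The extension step from $X_k$ to $X_{k+1}$ proceeds by traversing $\partial X_k$ in its cyclic order and completing the fan at each boundary vertex in turn. At a vertex $x$ of in-degree $r$ I would insert the $d-r-1$ outward faces realizing $\mathfrak{k}\setminus S_x$; the two extreme faces of this partial fan cover the two boundary edges incident to $x$ and are therefore shared with the two neighbours of $x$ on $\partial X_k$, while the remaining faces, together with the outer paths of all placed faces, contribute the free vertices of the next layer. Once $x$ is completed it becomes interior, so each new vertex joined to $x$ acquires in-degree at least $1$, and one checks that $\partial X_{k+1}$ again consists of in-degree-$1$ junction vertices interspersed with free vertices, exactly as in $X_1$. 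This yields the invariant I would maintain, namely that every boundary vertex has in-degree in $\{0,1\}$; since $d\ge 5$ this gives $d-r-1\ge 3$, so there is always room to place a distinct shared face toward each neighbour and still create at least one brand-new face, guaranteeing that the boundary does not collapse.

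For $d \geq 6$, and for $d=5$ with all $k_i\ge 4$, this is routine. The hypothesis $\vartheta(\mathfrak{k})\ge 2$ means the configuration is Euclidean or hyperbolic, so the boundary never shrinks and free vertices proliferate, while the complementary multisets $\mathfrak{k}\setminus S_x$ are large enough that the single face shared between any two consecutive boundary vertices can always be chosen to lie in both $\mathfrak{k}\setminus S_x$ and $\mathfrak{k}\setminus S_y$. The only genuinely global constraint is that the cyclic traversal must close up, i.e.\ the face shared between the last and the first vertices processed must match on both sides; with the slack available when $d\ge 6$ or when no triangle is present, I would fix in advance the order in which face-sizes are assigned along each layer so that this closing match is forced, thereby completing the induction.

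The main obstacle is the case of $5$-tuples containing a triangle, $\mathfrak{k}=[3,k_2,k_3,k_4,k_5]$. A triangle contributes no free vertex: its two non-central vertices are both junctions joined by the triangle's opposite edge, so placing a triangle rigidly imposes an adjacency between two boundary vertices and locks their fans together, violating the clean ``junctions separated by free vertices'' structure that drives the easy cases. With only five faces per vertex the multisets $\mathfrak{k}\setminus S_x$ are small, so the demand that each shared face simultaneously fit both endpoints becomes delicate, and a triangle-induced adjacency can propagate around the boundary and obstruct the closing match. I would resolve this by a finite case analysis organized by the number and cyclic positions of the triangles in the fan, adopting a modified placement rule that pairs each triangle with a compatible large neighbour and prescribes how triangle edges are allowed to meet along $\partial X_k$; in the residual configurations where the plain step stalls I would instead seed $X_1$ with a carefully chosen face-cycle and verify directly that the invariant is restored after finitely many layers. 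Essentially all of the work lies in this triangle bookkeeping, the remaining subcases following at once from the general extension step.
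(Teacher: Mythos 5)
Your overall strategy (layer-by-layer extension, tracking in-degrees, completing fans along $\partial X_k$ in cyclic order) is the same as the paper's, but there are two genuine problems. First, the invariant you propose to maintain --- that every vertex of $\partial X_k$ has in-degree in $\{0,1\}$ --- is false as soon as $\mathfrak{k}$ contains a $3$. When the face shared between two consecutive boundary vertices $x,y$ (i.e.\ the face covering the boundary edge $(x,y)$) is a triangle, its unique new vertex is joined to both $x$ and $y$ and therefore has in-degree $2$ in the next layer; with several triangles in $\mathfrak{k}$ this is unavoidable (e.g.\ $[3^5,4]$ at a vertex whose single inward face is the $4$-gon), and the paper's own induction explicitly admits in-degrees $0,1,2$ (and, for tuples with three triangles, configurations it must rule out separately). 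Since your count ``$d-r-1\ge 3$, so there is always room'' rests on $r\le 1$, the quantitative slack you invoke is not actually available in the cases where it is needed, and the clean picture of ``junction vertices of in-degree $1$ interspersed with free vertices'' does not propagate.

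Second, and more importantly, the case that carries essentially all the content of Proposition \ref{pseudodeg5} --- $5$-tuples containing a triangle --- is not proved but only announced as ``a finite case analysis organized by the number and cyclic positions of the triangles.'' The paper's argument here is not generic bookkeeping: for $[3^2,k_3,k_4,k_5]$ one must establish that $\partial X_k$ always contains a free vertex adjacent to a vertex of in-degree $1$, which requires excluding two specific boundary configurations (one by a separate induction on levels, one by exhibiting a homogeneous map of type $[4,4,3,4,3]$); the three-triangle case survives only because the angle-sum condition collapses it to $[3^3,4^2]$, realized by the known vertex-transitive map of type $(3^2,4,3,4)$; and the one-triangle case is reduced to the two-triangle case by a further boundary analysis. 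None of these steps follows from the ``slack'' available at degree $5$, and without them the closing-up of the circular traversal --- which you correctly identify as the global obstruction --- is not justified. As written, the proposal is a correct plan for the easy cases and a statement of intent for the hard one.
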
 
\begin{proof}

\begin{figure}[ht!]
\tikzstyle{ver}=[]
\tikzstyle{vert}=[circle, draw, fill=black!100, inner sep=0pt, minimum width=4pt]
\tikzstyle{vertex}=[circle, draw, fill=black!00, inner sep=0pt, minimum width=4pt]
\tikzstyle{edge} = [draw,thick,-]
 \begin{minipage}{.60\textwidth} 
\centering
\begin{tikzpicture}[scale=0.20]

\draw[edge, thick](5,5)--(32,5);
\draw[edge, thick](2,10)--(32,10);
\draw[edge, thick](2,15)--(32,15);

\draw[edge, thick](14, 5)--(5,10);
\draw[edge, thick](21, 5)--(15,10);
\draw[edge, thick](21, 5)--(27,10);
\draw[edge, thick](30, 5)--(27,10);
\draw[edge, thick](15, 10)--(20,15);
\draw[edge, thick](27, 10)--(30,15);
\draw[edge, thick](15, 10)--(10,15);
\draw[edge, thick](15, 10)--(15,15);

\node[ver] () at (16.5, 12.5){\scriptsize $S$};
\node[ver] () at (9.5, 12.5){\scriptsize ${U_{3}}$};
\node[ver] () at (23.5, 12.5){\scriptsize ${T}$};
\node[ver] () at (14, 12.5){\scriptsize ${R}$};
\node[ver] () at (8.0, 9.3){\scriptsize ${v_{i-1}}$};
\node[ver] () at (14.3, 9.4){\scriptsize ${v_{i}}$};
\node[ver] () at (29, 9.3){\scriptsize ${v_{i+1}}$};
\node[ver] () at (26.5, 7.5){\scriptsize ${W_1}$};
\node[ver] () at (30.5, 7.5){\scriptsize ${W_2}$};
\node[ver] () at (20.8,8){\scriptsize ${U_1}$};
\node[ver] () at (12.8,8){\scriptsize ${U_2}$};
\node[ver] () at (34,10){\scriptsize ${\partial X_{k}}$};
\node[ver] () at (34.5,5){\scriptsize ${\partial X_{k-1}}$};
\node[ver] () at (34.5, 15){\scriptsize ${\partial X_{k+1}}$};

\end{tikzpicture}
\captionof{figure}{Layer construction for $d=6$, case (a)}
\label{fig:pcu} 
\end{minipage}%
 \begin{minipage}{.45\textwidth} 
 \hspace{0.5cm}
\begin{tikzpicture}[scale=0.2]

\draw[edge, thick](8,5)--(30,5);
\draw[edge, thick](8,10)--(31,10);
\draw[edge, thick](8,15)--(32,15);

\draw[edge, thick](10, 5)--(15,10);
\draw[edge, thick](21, 5)--(15,10);
\draw[edge, thick](21, 5)--(27,10);
\draw[edge, thick](27, 10)--(30,15);

\draw[edge, thick](15, 10)--(10,15);
\draw[edge, thick](15, 10)--(20,15);

\node[ver] () at (15.5, 12.5){\scriptsize $S$};
\node[ver] () at (9.5, 12.5){\scriptsize ${U_{4}}$};
\node[ver] () at (23.5, 12.5){\scriptsize ${T}$};
\node[ver] () at (13.3, 9.4){\scriptsize ${v_{i}}$};
\node[ver] () at (28.5, 9.2){\scriptsize ${v_{i+1}}$};
\node[ver] () at (26.5, 7.5){\scriptsize ${W_1}$};
\node[ver] () at (20.8,8){\scriptsize ${U_1}$};
\node[ver] () at (14.8,8){\scriptsize ${U_2}$};
\node[ver] () at (9.8,8){\scriptsize ${U_3}$};
\node[ver] () at (34,10){\scriptsize ${\partial X_{k}}$};
\node[ver] () at (34,5){\scriptsize ${\partial X_{k-1}}$};
\node[ver] () at (34.5, 15){\scriptsize ${\partial X_{k+1}}$};

\end{tikzpicture}

\captionof{figure}{Case (b)-part-1}
\label{fig:6l34}

\end{minipage}

\end{figure}

\textbf{Degree $\geq 6$.} Let $\mathfrak{k}=[k_1, k_2, k_3, k_4, k_5, k_6]$ be a $6$-tuple. We may assume that $k_i \geq 4$ for at least one $i$, as the existence of a tiling of type $[3^6]$ is trivial. 
\newline
Assume inductively that $X_k$ is a tiling of type $\mathfrak{k}$ on $D^k(X_0)$. By induction on layers, the possible in-degrees of a vertex on $\partial X_k$ are $0$, $1$, or $2$. Moreover, either there is at least one free vertex (say $u$) or there are two consecutive vertices (say $\{v_1, v_2\}$) of in-degree $1$ on $\partial X_k$. In the first case, we construct a fan around the vertex succeeding $u$ on $\partial X_k$; in the latter, we construct a fan around a vertex following the edge $\{v_1, v_2\}$. 
\\
Let $U_1, U_2, U_3, U_4, U_5, U_6$ denote faces of sizes $k_1, k_2, k_3, k_4, k_5, k_6$, respectively. Assume that we have a complete fan around $v_{i-1} \in \partial X_k$ and an induced partial fan around the succeeding vertex $v_{i} \in \partial X_k$. Then either \textbf{(a)} the in-degree of $v_i$ is $1$ (see Fig.~\ref{fig:pcu}), in which case, we may assume, without loss of generality, that the partial fan around $v_i$ is of the form $[k_1, k_2, k_3, \cdots]$, or \textbf{(b)} the in-degree of $v_i$ is $2$ (see Fig.~\ref{fig:6l34}) and the partial fan around $v_i$ of the form $[k_1, k_2, k_3, k_4, \cdots]$. 
\\
In case (a), if further the in-degree of $v_{i+1}$ is $2$ (see Fig.~\ref{fig:pcu}), then we can choose faces $R$, $S$ and $T$ so that $T \notin \{U_1, U_2, U_3, W_1, W_2\} $. With this choice, the fan around $v_i$ determines a partial fan around $v_{i+1}$. If the in-degree of $v_{i+1}$ is $1$, there is greater flexibility in selecting $R$, $S$, and $T$ so as to obtain a partial fan at $v_{i+1}$. 
\\
In case (b), if the in-degree of $v_{i+1}$ is $1$, then we can choose faces $S$, $T$ such that $\{W_1, U_1, T\}$ is a partial fan around $v_{i+1}$ (see Fig.~\ref{fig:6l34}). If $v_{i+1}$ has in-degree $\leq 4$ then we observe that $U_1$, $U_2$ and $W_1$ must all be triangles (see Fig.~\ref{fig:6l44}). Thus, treating $W_1$ in the role of $U_2$, we may select $S$ and $T$ so that $\{W_2, W_1, U_1, T\}$ becomes a partial fan at $v_{i+1}$.
\\
Due to the choice of the starting vertex, the above induction can be continued up to the final vertex on $\partial X_k$. This completes the induction when the in-degree of $v_{i}$ is $1$ or $2$. The case when $v_i$ is free is simpler and is treated similarly.

 \begin{figure}[ht!]
\tikzstyle{ver}=[]
\tikzstyle{vert}=[circle, draw, fill=black!100, inner sep=0pt, minimum width=4pt]
\tikzstyle{vertex}=[circle, draw, fill=black!00, inner sep=0pt, minimum width=4pt]
\tikzstyle{edge} = [draw,thick,-]
\centering
\begin{tikzpicture}[scale=0.2]

\draw[edge, thick](8,5)--(32,5);
\draw[edge, thick](8,10)--(32,10);
\draw[edge, thick](8,15)--(32,15);

\draw[edge, thick](10, 5)--(15,10);
\draw[edge, thick](21, 5)--(15,10);
\draw[edge, thick](21, 5)--(27,10);
\draw[edge, thick](27, 10)--(30,15);
\draw[edge, thick](30, 5)--(27,10);

\draw[edge, thick](15, 10)--(10,15);
\draw[edge, thick](15, 10)--(20,15);

\node[ver] () at (15.5, 12.5){\scriptsize $S$};
\node[ver] () at (9.5, 12.5){\scriptsize ${U_{4}}$};
\node[ver] () at (23.5, 12.5){\scriptsize ${T}$};
\node[ver] () at (13.3, 9.4){\scriptsize ${v_{i}}$};
\node[ver] () at (29.1, 9.2){\scriptsize ${v_{i+1}}$};
\node[ver] () at (26.5, 7.5){\scriptsize ${V_1}$};
\node[ver] () at (30.5, 7.5){\scriptsize ${V_2}$};
\node[ver] () at (20.8,8){\scriptsize ${U_1}$};
\node[ver] () at (15, 7.5){\scriptsize ${U_2}$};
\node[ver] () at (9.8,8){\scriptsize ${U_3}$};
\node[ver] () at (34,10){\scriptsize ${\partial X_{k}}$};
\node[ver] () at (34,5){\scriptsize ${\partial X_{k-1}}$};
\node[ver] () at (34.5, 15){\scriptsize ${\partial X_{k+1}}$};

\end{tikzpicture}
\captionof{figure}{Degree-6: case (b), part-2 }
\label{fig:6l44} 

\end{figure}
A construction similar to the $6$-tuple case can be performed for a $d$-tuple with $d>6$; the higher degree provides additional flexibility in permuting faces within a fan.
\par
\textbf{Degree 5.} For a $5$-tuple $\mathfrak{k}=[k_1, k_2, k_3, k_4, k_5]$ with either $k_3$ or $k_4 \geq 4$, we follow the inductive construction used in degree $6$ case. Given a tiling of type $\mathfrak{k}$ on $X_k$, the only possible in-degrees of a vertex on $\partial X_k$ is $0$ or $1$ (the possibility of degree $2$ and above is ruled out by the absence of triangles in $\mathfrak{k}$). Moreover, there must exist at least one free vertex, say $v$. Starting with a fan around $v$, the $(k+1)$-layered tiling $X_{k+1}$ can be constructed as in degree $6$ case, completing the induction.
\newline
For $5$-tuples with triangles, we divide the discussion into several subcases in terms of the number of triangles present. 
\\
The tuple $[3^5]$ is ruled out by the angle-sum condition.
\newline
For types $[3^3, k_4, k_5]$ and $[3^4, k_5]$ satisfying the angle-sum condition, we note that there is a unique homogeneous tiling (indeed, a vertex-transitive one) of type $[3^2, k_4, 3, k_5]$ for all $k_4, k_5 \geq 4$. The same construction also applies when $k_4=3$ (with $k_5 \geq 6$), yielding a tiling of type $[3^4, k_5]$. 
\par
For types $[3^2, k_3, k_4, k_5]$ with $k_3, k_4, k_5 >3$, the possible in-degrees of a vertex on $\partial X_k$ is $0$, $1$ or $2$. A vertex of in-degree $3$ on $\partial X_k$ can arise only from a vertex of type $[3^3, \dots ]$, $\partial X_{k-1}$( see Fig. \ref{fig:deg53}).

\begin{figure}[ht!]
\tikzstyle{ver}=[]
\tikzstyle{vert}=[circle, draw, fill=black!100, inner sep=0pt, minimum width=4pt]
\tikzstyle{vertex}=[circle, draw, fill=black!00, inner sep=0pt, minimum width=4pt]
\tikzstyle{edge} = [draw,thick,-]
\begin{minipage}{.50\textwidth} 
\centering
\begin{tikzpicture}[scale=0.2]

\draw[edge, thick](12,5)--(25,5);
\draw[edge, thick](12,10)--(25,10);
\draw[edge, thick](12,15)--(25,15);

\draw[edge, thick](15, 5)--(19,10);
\draw[edge, thick](23, 5)--(19,10);

\draw[edge, thick](15, 10)--(19,15);
\draw[edge, thick](19, 10)--(19,15);
\draw[edge, thick](23, 10)--(19,15);

\node[ver] () at (20.4, 14.3){\scriptsize ${v_{i}}$};
\node[ver] () at (21.4, 9.3){\scriptsize ${v_{i+1}}$};
\node[ver] () at (27.5,10){\scriptsize ${\partial X_{k-1}}$};
\node[ver] () at (27.5,5){\scriptsize ${\partial X_{k-2}}$};
\node[ver] () at (27.5, 15){\scriptsize ${\partial X_{k}}$};

\end{tikzpicture}
\captionof{figure}{in-degree for type $[3^2, k_3, k_4, k_5]$}
\label{fig:deg53}

\end{minipage}%
\begin{minipage}{.50\textwidth} 
\centering
\begin{tikzpicture}[scale=0.2]

\draw[edge, thick](10,5)--(30,5);
\draw[edge, thick](10,10)--(30,10);
\draw[edge, thick](10,15)--(30,15);

\draw[edge, thick](14, 5)--(16,10);
\draw[edge, thick](19, 5)--(16,10);
\draw[edge, thick](22, 5)--(24,10);
\draw[edge, thick](24, 10)--(26,15);
\draw[edge, thick](28, 10)--(26,15);
\draw[edge, thick](26, 5)--(24,10);

\draw[edge, thick](16, 10)--(13,15);

\draw[edge, thick](12, 10)--(13,15);

\node[ver] () at (19, 15){\scriptsize $\bullet$};

\node[ver] () at (15, 14.3){\scriptsize ${v_{i-1}}$};
\node[ver] () at (19, 14){\scriptsize ${v_{i}}$};
\node[ver] () at (24.3, 14.3){\scriptsize ${v_{i+1}}$};
\node[ver] () at (32.5,10){\scriptsize ${\partial X_{k-1}}$};
\node[ver] () at (32.5,5){\scriptsize ${\partial X_{k-2}}$};
\node[ver] () at (32, 15){\scriptsize ${\partial X_{k}}$};

\end{tikzpicture}
\captionof{figure}{Type $[3^2, k_3, k_4, k_5]$: case (a)}
\label{deg5l31y} 
\end{minipage}

\end{figure}

 Moreover, there is at least one free vertex on $\partial X_k$ along with either the preceding or the succeeding vertex (or both) of in-degree $1$. This assertion is verified by noting that a free vertex $v_{i}$ flanked by two adjacent vertices of in-degree $2$ implies either \textbf{(a)} two consecutive vertices of in-degree $4$ on $\partial X_{k-1}$ (see Fig.~\ref{deg5l31y}) or \textbf{(b)} a fan of type $[4, 4, 3, 4, 3]$ around a vertex (say $y$) on $\partial X_{k-1}$ (see Fig.~\ref{fig:deg532y}). It can be shown that the case (a) cannot arise by arguing inductively on the layers. For the case (b), it can be shown that there exists a homogeneous tiling of type $[4, 4, 3, 4, 3]$. With these observations, it is straightforward to verify that there is no further obstruction in constructing layer $X_{k+1}$ by induction, as established for degree $6$ case above.

\begin{figure}[ht!]
\tikzstyle{ver}=[]
\tikzstyle{vert}=[circle, draw, fill=black!100, inner sep=0pt, minimum width=4pt]
\tikzstyle{vertex}=[circle, draw, fill=black!00, inner sep=0pt, minimum width=4pt]
\tikzstyle{edge} = [draw,thick,-]
\centering
\begin{tikzpicture}[scale=0.2]

\draw[edge, thick](10,5)--(32,5);
\draw[edge, thick](10,10)--(32,10);
\draw[edge, thick](10,15)--(32,15);

\draw[edge, thick](14, 5)--(14,10);
\draw[edge, thick](22, 5)--(22,10);
\draw[edge, thick](22, 10)--(26,15);
\draw[edge, thick](29, 10)--(26,15);
\draw[edge, thick](29, 5)--(29,10);

\draw[edge, thick](22, 10)--(18,15);

\draw[edge, thick](14, 10)--(18,15);

\node[ver] () at (22, 15){\scriptsize $\bullet$};

\node[ver] () at (15.7, 14.3){\scriptsize ${v_{i-1}}$};
\node[ver] () at (22, 14){\scriptsize ${v_{i}}$};
\node[ver] () at (28, 14.3){\scriptsize ${v_{i+1}}$};
\node[ver] () at (21, 9){\scriptsize ${y}$};

\node[ver] () at (34.5,10){\scriptsize ${\partial X_{k-1}}$};
\node[ver] () at (34.5,5){\scriptsize ${\partial X_{k-2}}$};
\node[ver] () at (34, 15){\scriptsize ${\partial X_{k}}$};

\end{tikzpicture}

\captionof{figure}{Type $[3^2, k_3, k_4, k_5]$: case (b)}
\label{fig:deg532y}

\end{figure}

For types $[3, k_2, k_3, k_4, k_5]$ with $k_i \geq 4$ for all $i$, the possible in-degree of a vertex on $\partial X_k$ is $0$, $1$, or $2$. Moreover, there is at least one vertex of degree $2$ on $\partial X_k$ with at least one of its two adjacent vertices having degree $3$; otherwise, it leads to a vertex type of the form
$[3^2,..]$ on $\partial X_{k-1}$ as shown in Fig.~\ref{deg5l31y}. Thus the situation is equivalent to constructing tilings for tuples with only two triangles.

\end{proof}

\subsection{Tilings of degree $4$.} \label{deg4pseudo} We will first prove that the above approach can also be used to construct tilings for $4$-tuples without triangles. To do so, we require the following lemma to address a special class of tuples. 
 \begin{lemma}\label{twoeven}There exists a tiling of type $\mathfrak{k}=[2p, q, 2p, s]$, $\alpha (\mathfrak{k}) \geq 2$, on a closed surface, and hence on the plane.
\end{lemma}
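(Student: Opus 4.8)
\emph{The plan.} The idea is to exploit the evenness to convert the problem into constructing a single, very symmetric auxiliary map on a closed surface, and then to produce that map by a covering-space construction; the plane case then follows by passing to the universal cover.

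\textbf{Step 1 (structural reduction).} First I would read off the local structure forced by the type. In the cyclic vertex-type $[2p,q,2p,s]$ the $q$-gon is flanked by the two $2p$-gons and likewise the $s$-gon, so every edge of a $q$-gon or of an $s$-gon is shared with a $2p$-gon, while consecutive edges of a $2p$-gon are shared alternately with a $q$-gon and an $s$-gon. Because $2p$ is even, this alternation closes up consistently around each $2p$-gon --- this is the one and only place the evenness hypothesis enters, and it is essential. It follows that a map of type $\mathfrak{k}$ is exactly the \emph{medial} of a map $N^{*}$ (the $4$-valent map whose vertices are the edges of $N^{*}$ and whose faces are the vertices and the faces of $N^{*}$), where $N^{*}$ is required to be \emph{biregular and bipartite}: all of its faces are $2p$-gons, and its vertices split into two classes of degrees $q$ and $s$ with every edge joining the two classes. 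A direct check shows that the medial of such an $N^{*}$ has every vertex of type $[2p,q,2p,s]$; moreover the evenness of $2p$ is precisely the parity condition that permits $N^{*}$ to be bipartite in the first place. Thus it suffices to construct one biregular bipartite $2p$-angulation $N^{*}$ on a closed surface.

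\textbf{Step 2 (construction of $N^{*}$).} To build $N^{*}$ I would use a voltage-graph (covering) construction. Let $g=\gcd(q,s)$ and take a finite bipartite base graph $B$ with $s/g$ vertices of degree $q$ and $q/g$ vertices of degree $s$, embedded on some surface so that every face has (even) boundary length dividing $2p$. I then assign voltages in a finite abelian group $\Gamma$ to the edges of $B$, chosen so that around each face of boundary length $\ell$ the voltage-product has order exactly $2p/\ell$, and so that the voltages generate $\Gamma$ (to keep the derived cover connected). In the derived covering map vertex degrees are preserved (so the two classes still have degrees $q$ and $s$), bipartiteness is inherited, and each face of length $\ell$ lifts to a cycle of length $\ell\cdot(2p/\ell)=2p$. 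Hence the cover is the desired $N^{*}$, living on a closed surface whose orientability and Euler characteristic can be tuned by enlarging $\Gamma$; its medial is then a map of type $\mathfrak{k}$.

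\textbf{Step 3 (the obstacle, and the plane).} I expect the genuine difficulty to lie entirely in Step 2: one must simultaneously realise the two \emph{distinct} degrees $q\neq s$ inside a single bipartite base, find an embedding of that base all of whose face lengths divide $2p$, and then choose voltages so that faces of possibly several different base-lengths all blow up to exactly $2p$-gons while the cover stays connected. The freedom to enlarge $\Gamma$, together with the parity afforded by the evenness of $2p$, is what makes these divisibility and connectivity conditions solvable. Finally, to obtain the map on the plane I would pass to the universal cover: a covering map is a local isomorphism, so the lifted map again has every vertex of type $\mathfrak{k}$, and since $\vartheta(\mathfrak{k})\geq 2$ the universal cover is $\mathbb{E}^{2}$ or $\mathbb{H}^{2}$ --- that is, the plane.
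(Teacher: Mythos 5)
Your Step 1 reduction is correct and is a genuinely different route from the paper's: you realise a map of type $[2p,q,2p,s]$ as the medial of a bipartite map $N^{*}$ with all faces $2p$-gons and vertex degrees $q$ and $s$, whereas the paper simply invokes a known existence theorem (Theorem 1.3 of the cited work of Edmonds--Ewing--Kulkarni) for a trivalent homogeneous map of type $[2p,2q,2s]$ on a closed orientable surface and contracts the edges common to the $2q$-gons and $2s$-gons. These are essentially two faces of the same underlying object, and your Step 3 (lifting to the universal cover) matches what the paper does implicitly.

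The problem is that Step 2, which you yourself identify as the entire content of the argument, is not carried out: it is a description of what a proof would have to accomplish, not a proof. You need (i) a finite $(q,s)$-biregular bipartite base graph $B$ \emph{embedded} in a surface so that every face length divides $2p$, and (ii) voltages in a finite group whose net value around each face has exactly the order $2p/\ell$ required by that face, while the derived graph stays connected and bipartite. Neither existence claim is established; for arbitrary $p,q,s$ the divisibility constraint on the base face lengths and the simultaneous order conditions on several faces are precisely where the difficulty lives, and ``the freedom to enlarge $\Gamma$'' does not by itself resolve them (the order conditions are constraints of the form ``this element has order exactly $n_j$'' for several walks at once, and a generic enlargement of $\Gamma$ changes all of these orders together). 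In effect you have reduced the lemma to a statement of the same strength as the theorem the paper cites from the literature; without either citing such a result or completing the voltage construction explicitly, the argument has a genuine gap.
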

\begin{proof} By Theorem 1.3 of \cite{EEK2}, there exists a homogeneous tiling of type $\mathfrak{k}=[2p, 2q, 2s]$ for $\alpha (\mathfrak{k}) \geq 2$ on an orientable closed surface. For $ q, s \geq 3$, we can contract common edges of the $2q$-gons and $2s$-gons of this tiling to a point to obtain a homogeneous tiling of type $[2p, q, 2p, s]$.
\end{proof}
 \begin{proposition} \label{pseudo4prop} 
For a cyclic-tuple $\mathfrak{k}=[k_1, k_2, k_3, k_4]$ with $k_i \geq 4$ for all $i$ and $\vartheta(\mathfrak{k}) \geq 2 $, there exists a pseudo-homogeneous tiling on the plane of type $\mathfrak{k}$.
\end{proposition}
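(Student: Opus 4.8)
The plan is to recycle the layer-by-layer scheme of Proposition~\ref{pseudodeg5}, but to peel off first the family of tuples for which Lemma~\ref{twoeven} already delivers a map, and only then run the induction on what remains. Since the type is pseudo-homogeneous, I am free to arrange the four sizes in any cyclic order at each vertex; in particular, if the multiset $\{k_1,k_2,k_3,k_4\}$ contains an even size occurring at least twice, I may place those two equal even faces in opposite positions and write $\mathfrak{k}=[2p,q,2p,s]$. For such tuples Lemma~\ref{twoeven} produces a homogeneous---hence pseudo-homogeneous---map on a closed surface, and therefore on the plane, and nothing is left to do. So I would assume henceforth that no even size is repeated, which is precisely the range the lemma does not reach.

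For the remaining tuples I would build $X_0\subset X_1\subset\cdots$ exactly as before. The first step is the in-degree bound: since every $k_i\geq 4$ there are no triangles, and by the same reasoning used in the degree-$5$ no-triangle case this forces every vertex of $\partial X_k$ to have in-degree $0$ or $1$ (an in-degree $\geq 2$ at a $4$-valent boundary vertex would squeeze a triangle into one of the angular slots, which is impossible here). Granting this, each boundary cycle carries a free vertex; I fix it as the starting vertex and run the circular induction. Assuming a completed fan around $v_{i-1}$ and the partial fan it induces around the successor $v_i$, I complete the fan at $v_i$ by inserting the missing faces in a suitable cyclic order, and verify that the partial fan thereby induced at $v_{i+1}$ is again extendable, continuing until the cycle closes.

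The hard part will be the tightness of having only four faces. At a boundary vertex of in-degree $1$ exactly one neighbouring face is fixed, so once I choose the cyclic order of the three remaining faces the partial fan passed on to $v_{i+1}$ is almost entirely determined; there is far less permutation freedom than in the degree-$\geq 5$ case, and the local choices must be made coherently all the way around the boundary for the induction to close. Read along a single face-cycle, this amounts to a compatibility condition on how the two faces flanking a fixed face $P$ alternate from vertex to vertex, and closing up after $|P|$ steps imposes a parity constraint on the sizes. The bulk of the work is to check that, under the standing assumption that no even size is repeated, these orderings can always be chosen consistently so that every face-cycle closes. I expect the configurations that resist this construction to be exactly those of the form $[2p,q,2p,s]$, which is why that family was separated off at the outset and dispatched by Lemma~\ref{twoeven} through the edge-contraction trick rather than by the induction.
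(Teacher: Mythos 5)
Your case split is sound as far as it goes: invoking Lemma \ref{twoeven} for every tuple with a repeated even entry is legitimate (and in fact covers more ground than the paper, which only routes the ``at least two $4$'s'' case through that lemma and handles tuples such as $[6,6,7,9]$ by the layer induction), and the in-degree $\leq 1$ claim for triangle-free tuples agrees with the paper's. The gap is in your last paragraph. You predict that the tuples resisting the circular induction are exactly those of the form $[2p,q,2p,s]$ --- i.e.\ exactly the ones you removed --- and you leave ``the bulk of the work'' as an expectation rather than an argument. That prediction is wrong: the tuples that actually cause trouble are those containing a single $4$, e.g.\ $[4,5,7,9]$, which has no repeated even entry and therefore survives your reduction untouched. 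For such a tuple, a $4$-gon sitting on an edge of $\partial X_{k-1}$ produces two \emph{adjacent} in-degree-$1$ vertices $v_{i-1}, v_i$ on $\partial X_k$; completing the fan at $v_{i-1}$ then determines the entire fan at $v_i$ (only one face remains to be placed there), and the face thereby forced onto the edge $v_i v_{i+1}$ need not extend to a valid partial fan at $v_{i+1}$. This is precisely the obstacle the paper isolates in Figure \ref{fig:deg4l}, and it is resolved there by backtracking to the preceding free vertex $v_{i-2}$ (which exists because the adjacent face has more than $4$ sides) and permuting two faces in its fan. Your proposal contains no counterpart of this step, which is the actual content of the proof for the remaining tuples.

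Relatedly, the ``parity constraint on the sizes'' you anticipate from closing a face-cycle is a red herring in this triangle-free setting: the proposition asserts unconditional existence for all $4$-tuples with $k_i\geq 4$, so no divisibility obstruction can occur (such constraints genuinely appear only for tuples containing a $3$, as in Theorem \ref{431prop}). The difficulty is not a global parity condition but the local forced propagation of fans across the outer edge of a $4$-gon described above.
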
 
\begin{proof}
The construction of tilings for tuples $\mathfrak{k}$ with $k_i \geq 5$ is straightforward using the method described in Proposition \ref{pseudodeg5} for degree greater than $5$. This relies on the fact that the in-degree of any vertex on $\partial X_k$ is either $0$ or $1$, and furthermore, no two consecutive vertices have degree $3$. For a tuple $\mathfrak{k}$ with at least two of the $k_i$'s equal to $4$, the existence of a homogeneous tiling is guaranteed by Lemma \ref{twoeven}.
\newline
For tuples $\mathfrak{k}$ with $k_1=4$ and $k_2, k_3, k_4>4$, the in-degrees of the vertices on $\partial X_k$ are either $0$ or $1$. Moreover, we have one of the following two scenarios: \textbf{(a)} no two consecutive vertices on $\partial X_k$ have in-degree $1$ \textbf{(b)} there are vertices of a $4$-gon as illustrated in Fig.~\ref{fig:deg4l} by vertices $v_i$ and $v_{i+1}$. In the latter case, these vertices of in-degree $1$ are preceded and followed by vertices of in-degree $0$, $v_{i-1}$ and $v_{i+2}$ in the figure. For case (a), it is straightforward to verify that there is no obstacle in extending $X_k$ to $X_{k+1}$ inductively as done for degree $\geq 5$ above. For case (b), a potential obstacle may arise if a fan around a vertex $v_{i-1}$ completely determines the fan around the succeeding vertex $v_i$, which in turn determines a sequence of faces around the vertex $v_{i+1}$ that does not constitute a partial fan. This situation, however, can be avoided by permuting the faces appropriately in the fan of the free vertex $v_{i-2}$ preceding $v_{i-1}$(as $k_3>4$); more precisely, by permuting $P$ and the $k_2$-gon, and consequently the $k_4$ and $k_2$-gon in the fan of $v_{i}$.

\begin{figure}[ht!]
\tikzstyle{ver}=[]
\tikzstyle{vert}=[circle, draw, fill=black!100, inner sep=0pt, minimum width=4pt]
\tikzstyle{vertex}=[circle, draw, fill=black!00, inner sep=0pt, minimum width=4pt]
\tikzstyle{edge} = [draw,thick,-]
\centering
\begin{tikzpicture}[scale=0.2]

\draw[edge, thick](5,5)--(35,5);
\draw[edge, thick](2,10)--(35,10);
\draw[edge, thick](2,15)--(35,15);

\draw[edge, thick](20, 5)--(5,10);
\draw[edge, thick](20, 5)--(20,10);

\draw[edge, thick](20, 10)--(20,15);
\draw[edge, thick](30, 10)--(32,15);

\draw[edge, thick](16, 10)--(11,15);
\draw[edge, thick](16, 10)--(15,15);

\draw[edge, thick](10, 10)--(9,15);
\draw[edge, thick](10, 10)--(5,15);

\draw[edge, thick](30, 5)--(30,10);

\node[ver] () at (18.5, 12.5){\scriptsize ${K_{4}}$};

\node[ver] () at (23.5, 12.5){\scriptsize ${K_{2}}$};

\node[ver] () at (11.5, 12.5){\scriptsize ${K_{2}}$};
\node[ver] () at (8.6, 13){\scriptsize ${P}$};
\node[ver] () at (14.5, 13){\scriptsize ${4}$};

\node[ver] () at (9.5, 9.3){\scriptsize ${v_{i-2}}$};
\node[ver] () at (15.5, 9.3){\scriptsize ${v_{i-1}}$};

\node[ver] () at (28.5, 9.3){\scriptsize ${v_{i+1}}$};
\node[ver] () at (21, 9.3){\scriptsize ${v_{i}}$};

\node[ver] () at (25.5, 7.5){\scriptsize ${4}$};

\node[ver] () at (32.5, 7.5){\scriptsize ${K_{2}}$};

\node[ver] () at (16.8,8){\scriptsize ${K_{3}}$};

\node[ver] () at (37.5,10){\scriptsize ${\partial X_{k}}$};
\node[ver] () at (37,5){\scriptsize ${\partial X_{k-1}}$};
\node[ver] () at (37.5, 15){\scriptsize ${\partial X_{k+1}}$};

\end{tikzpicture}
\captionof{figure}{An obstacle for layer construction for type $[4, k_2, k_3, k_4]$}
\label{fig:deg4l}
\end{figure}

\end{proof}
The classification of $4$-tuples with triangles is considerably more complex as we shall see below. For some classes, we use known results to establish the existence of tilings, as shown in the following examples.

\begin{example} \label{tiling3369} We construct a weakly periodic tiling of type $[3, 3, 6, 9]$ on $\mathbb{H}^2$. First, there exists a weakly periodic homogeneous tiling $\mathcal{T}$ of type $[4, 6, 4, 6]$; see Theorem 1.3 in \cite{EEK2}. The dual of $\mathcal{T}$ is obtained by successive reflections of a single quadrilateral across its sides. Note that the edges of a quadrilateral admit a proper two-label edge labelling in which adjacent edges receive distinct labels. Since the degree of $\mathcal{T}$ is even, the reflections preserve the labeling. Consequently, each face of the dual tiling of $\mathcal{T}$ inherits the two-labelling, and so do the faces of $\mathcal{T}$ themselves. This is illustrated in Figure~\ref{fig:3369}, where the two labels are indicated by different line styles (dotted versus solid). 
\begin{figure}[H] 
\begin{minipage}{.45\textwidth} 
\centering
	\begin{tikzpicture}[scale=0.5,
 x=1cm, y=1cm, line cap=round, line join=round,
 Thick/.style={line width=0.6pt},
 Thin/.style={line width=0.5pt},
 Dashed/.style={dash pattern=on 1.5pt off 3.0pt}
]
 
 \draw[Thick, Dashed] 
 (-1.0,-2.2) -- (2.5,-2.2)
 (1.1,2.2) -- (-1.0,3.3)
 (1.1,2.2) -- (3.7,3.0)
 (2.5,0.0) -- (4.9,0.8)
 (-3.7,-3.1) -- (-4.3,-1.8);

\draw[Thick] 
 (-1.0,-2.2) -- (-1.0,0.0) -- (-3.0,1.3)
 (2.5,-2.2) -- (2.5,0.0) -- (1.1,2.2)
 (-3.0,2.8) -- (-1.0,3.3)
 (3.7,3.0) -- (4.9,0.8)
 (-3.7,-3.1) -- (-2.0,-3.3)
 (-4.3,-1.8) -- (-3.3,-0.7) -- (-4.9,0.4);

\draw[Thin, Dashed] 
 (2.5,0.0) -- (-1.0,0.0)
 (-3.0,1.3) -- (-3.0,2.8)
 (-1.0,-2.2) -- (-2.0,-3.3)
 (-3.0,1.3) -- (-4.9,0.4)
 (-3.3,-0.7) -- (-1.0,0.0);

\end{tikzpicture}
\captionof{figure}{Tiling of type $[4^2,6^2]$}
\end{minipage}%
\begin{minipage}{.45\textwidth}
\centering
\includegraphics[scale=0.40]{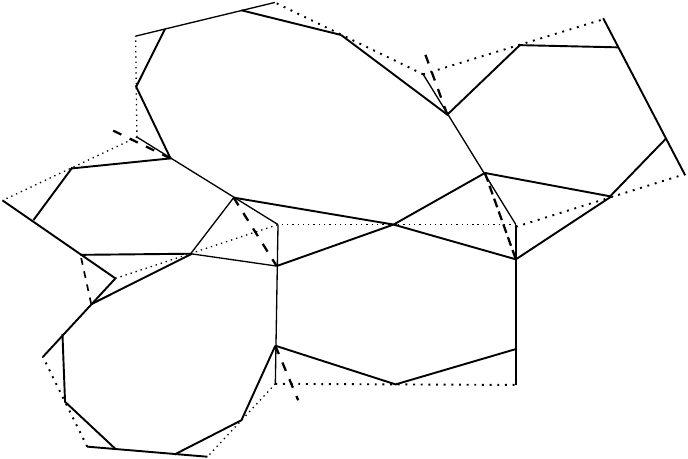}
\captionof{figure}{Tiling of type $[3^2,6, 9]$}
\label{fig:3369} 
\end{minipage}
\end{figure}
We then perform a sequence of operations on $\mathcal{T}$ resembling a rectification along the dotted edges and truncation along the solid edges. More precisely, we form quadrilaterals centered around each vertex of $\mathcal{T}$ by joining edges incident to them so that the vertices (of the quadrilaterals) on the dotted edges are precisely the midpoints and vertices on the solid edges lie before the midpoint; see Fig.~\ref{fig:3369}. Next, we remove the dotted edges together with the original vertices of $T$ and edges incident to them inside the quadrilaterals. Finally, we introduce edges between opposite vertices of the quadrilaterals that have degree less than $4$, shown by dashed edges in the figure. This produces a periodic tiling of type $[3,3,6,9]$ on $\mathbb{H}^2$. 
\end{example}
 
\begin{corollary} \label{33pq} Using the same method as above, we can construct a homogeneous tiling of type $[3, 3, 3p, 3q]$ from a homogeneous tiling of type $[2p, 2q, 2p, 2q]$ for $p, q \geq 2$ on $\mathbb{H}^2$.

\end{corollary}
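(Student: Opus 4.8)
The plan is to reuse verbatim the rectification–truncation construction of Example \ref{map3369}, now applied to a homogeneous map $T$ of type $[2p,2q,2p,2q]$, and to check only that the bookkeeping of face sizes produces $[3,3,3p,3q]$ for all $p,q\ge 2$ rather than merely for the pair $(p,q)=(2,3)$ treated there. The input map $T$ on a closed orientable surface $S$ is furnished by Theorem 1.3 of \cite{EEK2} (whenever the associated Euler-characteristic equation admits a solution), and I would note at the outset that the condition $\vartheta(\mathfrak k)\ge 2$ for $\mathfrak k=[2p,2q,2p,2q]$ reduces to $1/p+1/q\le 1$, which holds automatically for $p,q\ge 2$; the same computation shows the output type $[3,3,3p,3q]$ satisfies $\vartheta\ge 2$ under exactly the same inequality, so validity is preserved.

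First I would produce the two-colouring of the edges of $T$ into \emph{dotted} and \emph{smooth} classes, alternating around every vertex, by the same even-degree argument used in the example. Every vertex of $T$ has degree $4$, so one colours the four edges at a chosen vertex alternately and propagates. Since every face of $T$ has even length ($2p$ or $2q$), the colour returns to itself after traversing any face once, so on the simply connected cover $\mathbb H^2$ the propagation is globally consistent, and the even degree guarantees the colouring is preserved by the reflections generating the tiling, hence descends to $S$ (alternatively one works directly on $\mathbb H^2$, which already delivers the desired plane tiling). A consequence I would record is that the edges around each face also alternate, so a $2p$-gon carries exactly $p$ dotted and $p$ smooth edges, and a $2q$-gon carries $q$ of each.

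Next I would carry out the identical operation: collapse each dotted edge to its midpoint, truncate each smooth edge near both endpoints, form the quadrilateral $m_1t_1m_2t_2$ around each vertex $v$ (with $m_1,m_2$ the midpoints of the two dotted edges at $v$ and $t_1,t_2$ the truncation points on the two smooth edges), delete the dotted edges, the original vertices of $T$, and the edge-stubs interior to the quadrilaterals, and finally insert the diagonal $t_1t_2$ joining the two truncation vertices—those of deficient degree—splitting each such quadrilateral into two triangles. The heart of the argument is then the face count: tracing the boundary of the image $F'$ of a $2p$-gon $F$, each of its $p$ dotted edges contributes a single vertex (the collapsed midpoint) and no edge, each of its $p$ smooth edges contributes one retained segment, and each of its $2p$ corners contributes one quadrilateral edge, for a total of $p+2p=3p$ boundary edges; thus $F$ becomes a $3p$-gon. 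Identically a $2q$-gon becomes a $3q$-gon, and each vertex of $T$ yields two triangles.

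Finally I would confirm that the vertex-type is uniformly $[3,3,3p,3q]$. At a midpoint vertex $m$ (degree $4$) the four surrounding faces are the two triangles arising from the quadrilaterals at the endpoints of the collapsed dotted edge, together with the $3p$-gon and $3q$-gon that bordered that edge, giving $[3,3p,3,3q]$; at a truncation vertex $t$ (raised to degree $4$ by the inserted diagonal) the four faces are the two triangles of its own quadrilateral and the $3p$-gon and $3q$-gon lying on either side of the retained smooth segment, giving $[3,3,3p,3q]$. Hence the resulting map is pseudo-homogeneous of type $[3,3,3p,3q]$, as claimed. I expect the main obstacle to be exactly the two pieces of bookkeeping above—verifying the global consistency of the edge two-colouring (which rests essentially on all faces of $T$ having even length) and verifying the $3p/3q$ count uniformly in $p,q$—while everything else is a direct transcription of Example \ref{map3369}.
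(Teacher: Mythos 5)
Your proposal is correct and follows exactly the route the paper intends: the paper's entire proof of Corollary \ref{33pq} is the phrase ``using the same method as above,'' i.e.\ repeat Example \ref{map3369} with a homogeneous map of type $[2p,2q,2p,2q]$ in place of $[4,6,4,6]$. Your write-up merely supplies the bookkeeping the paper leaves implicit (consistency of the edge two-colouring via even vertex degree and even face lengths, the $p+2p=3p$ boundary count, and the two vertex types $[3,3p,3,3q]$ and $[3,3,3p,3q]$), all of which checks out.
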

Before establishing the classification of 4-tuples, we make two fundamental observations regarding the valid neighborhoods of triangles and pentagons. These constraints will be crucial for the subsequent proofs.

\begin{observation} \label{A1}
For a tiling of type $[3, k_2, k_3, k_4]$ with $3 < k_2 < k_3 < k_4$, the neighborhoods of all triangles are the same. In particular, for a valid neighborhood of a $k_2$-gon with $m$ $\Delta$-triangles around it, $k_2-m$ must be even, and likewise for $k_3$- and $k_4$-gon.

\end{observation}
\begin{observation} \label{A2} For a tiling of type $[3, 5, k_3, k_4]$ with $5<k_3<k_4$, for a valid neighborhood, all pentagons must be of type $4$. This implies, in particular, that there cannot be $\nabla$-triangles attached to a $k_3$-gon (or a $k_4$-gon) on two consecutive vertices which are also incident to an edge of a pentagon; see Fig.~\ref{3triangle}.
\end{observation}
\begin{figure}[H] 
\tikzstyle{ver}=[]
\tikzstyle{vert}=[circle, draw, fill=black!100, inner sep=0pt, minimum width=4pt]
\tikzstyle{vertex}=[circle, draw, fill=black!00, inner sep=0pt, minimum width=4pt]
\tikzstyle{edge} = [draw,thick,-]
\centering
\begin{tikzpicture}[scale=0.10]
\draw[edge, thick](1,35)--(35,35);
\draw[edge, thick](5,35)--(1,42)--(9,42)--(5,35);
\draw[edge, thick](16,35)--(12,42)--(18.5,42)--(16,35);
\draw[edge, thick](27,35)--(24.5,42)--(31,42)--(27,35);
\draw[edge, thick](13,47)--(18.5,42)--(21.5,47)--(24.5,42)--(29,47);
\draw[edge, thick](3,35)--(3,28);
\draw[edge, thick](33,35)--(33,28);
\node[ver] () at (21.5,37){\scriptsize ${5}$};
\node[ver] () at (21.5,48.2){\scriptsize ${v}$};
\node[ver] () at (10.5,38){\scriptsize ${ k_{4}}$};
\node[ver] () at (18,30){\scriptsize ${ k_{3}}$};
\node[ver] () at (18.2,45.2){\scriptsize ${ k_{4}}$};
\node[ver] () at (24.7,45.2){\scriptsize ${ k_{4}}$};
\end{tikzpicture}
\captionof{figure}{Consecutive $\nabla$-triangles for type $[3, 5, k_3, k_4]$}
\label{3triangle} 
\end{figure}

 \begin{theorem} \label{431prop} For a $4$-tuple $ \mathfrak{k}$ with $\vartheta(\mathfrak{k}) \geq 2$ with at least one triangle, there exists a tiling on the plane of type $\mathfrak{k}$ if and only if $\mathfrak{k}$ is none of the following with $5<k_3 < k_4$:
 \newline
 1. $[3, 3, k_3, k_4]$, $k_3$ or $k_4$ is not a multiple of $3$;
\newline
2. a) $[3, 4, 5, 5]$, $[3, 4, 6, 8]$ ; b) $[3, 4, k_3, k_4]$, $k_3$ or $k_4$ is not even;
\newline
3. a) $[3, 5, k_3, k_4]$, $k_3 \leq 11$, $k_3 \neq 10$. 
\end{theorem}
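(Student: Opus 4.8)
The plan is to prove both directions by exhausting the (finite description of the) family according to the number of triangles in $\mathfrak{k}$ and the value of its smallest non-triangular entry, treating existence by explicit construction and non-existence by a local analysis of the admissible fans and face-cycles. First the reductions: since $\vartheta(\mathfrak{k})\geq 2$ rules out three or more triangles, only two shapes remain, namely two triangles $\mathfrak{k}=[3,3,k_3,k_4]$ with $3<k_3\leq k_4$ (the angle sum forces $1/k_3+1/k_4\leq 1/3$, so both entries are $\geq 6$ in the surviving case where they are multiples of $3$), and one triangle $\mathfrak{k}=[3,k_2,k_3,k_4]$ with $4\leq k_2\leq k_3\leq k_4$. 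In the one-triangle case I split by $k_2$ into the families $[3,4,\cdots]$ and $[3,5,\cdots]$ of cases 2 and 3 together with the residual family $k_2\geq 6$, which the statement never forbids and which I therefore expect to always admit a map. Throughout, at a vertex of type $[3,k_2,k_3,k_4]$ with distinct entries the triangle is the unique triangular face, so each triangle is edge-adjacent to exactly one copy of each of the other three faces, while its three vertex-neighbours (the $\nabla$-faces) repeat those three sizes; I record this as the combinatorial backbone of the whole argument.

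For the existence direction (when $\mathfrak{k}$ is not on the list) I assemble a toolbox from the constructions already in hand: $[3,3,3p,3q]$ by Corollary \ref{33pq}, $[3,4,4p,4q]$ by Example \ref{34k3}, all tuples with two equal even entries (in particular $[3,4,4,k]$) by Lemma \ref{twoeven}, and the surgery of Example \ref{map3369} as a template. The genuinely new existence cases are the survivors $[3,4,5,k]$ for $k\geq 6$, $[3,5,5,k]$, $[3,5,10,k]$ and $[3,5,k_3,k_4]$ for $k_3\geq 12$, together with $k_2\geq 6$. For these I would either start from a suitable homogeneous map furnished by Theorem~1.3 of \cite{EEK2} and apply a rectification, truncation or cantellation-type surgery exactly as in Examples \ref{map3369} and \ref{34k3}, or build the map directly by a modification of the layer-by-layer scheme of Proposition \ref{pseudodeg5}, exploiting the extra room afforded by the large faces to permute faces within each fan and thereby avoid obstructions.

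For the non-existence direction I argue locally. In the two-triangle family I read off the face-cycle of a $k_3$-gon $P$: its edge-neighbours form a cyclic word in $\{\text{triangle},\,k_4\text{-gon}\}$ in which the $k_4$-gons are isolated, and tracking how the $\Delta$- and $\nabla$-triangles chain around $P$ forces the number of edges of $P$ to be divisible by $3$; the symmetric argument on the $k_4$-gon gives $3\mid k_4$. In the family $[3,4,k_3,k_4]$ with $5<k_3<k_4$ the analogous bookkeeping around a square $S$, whose four edge-neighbours are a proper $3$-colouring of $C_4$, propagates a period-$4$ alternation of squares and triangles around the $k_3$- and $k_4$-gons, forcing $4\mid k_3$ and $4\mid k_4$. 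The sporadic tuples $[3,4,5,5]$, $[3,5,k_3,k_4]$ with $k_3\in\{6,7,8,9\}$, and $[3,5,11,k_4]$ are each excluded by a direct finite check: enumerating the admissible cyclic fans around a triangle and around the pentagon (equivalently, forcing the unique triangle- and pentagon-neighbour of each face to interlock) leaves no consistent way to fill the second shell around a triangle.

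The main obstacle I anticipate is exactly this non-existence analysis for the infinite families and the pentagon sporadics. The subtlety is that pseudo-homogeneity permits the three distinct cyclic orderings of $[3,k_2,k_3,k_4]$ to occur at different vertices, so no single local pattern may be fixed; the argument must show that \emph{every} admissible distribution of these cyclic types around each triangle and each small face is globally inconsistent unless the divisibility holds. Pinning down the $[3,5,k_3,k_4]$ boundary precisely — non-existence for $k_3\in\{6,7,8,9,11\}$ but existence for $k_3=5,10$ and all $k_3\geq 12$ — is the most delicate point, and I expect it to require an essentially exhaustive description of the configurations of faces around the triangles and pentagons, which is also the counting input reused later in the aperiodicity results.
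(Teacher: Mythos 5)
Your outline follows the same architecture as the paper's proof --- explicit constructions for the existence half, and a local analysis of fans and face-cycles for the non-existence half --- and your period-$3$ and period-$4$ divisibility arguments for cases 1 and 2(b) are essentially the ones the paper uses. The gap is concentrated exactly where you yourself flag the difficulty, and it is a real gap rather than a deferrable detail. The claim that $[3,5,k_3,k_4]$ with $k_3\in\{6,7,8,9\}$ and $[3,5,11,k_4]$ are "excluded by a direct finite check" around a triangle and a pentagon cannot work as stated: for $k_3=7$ and $k_3=11$ the entry $k_4$ is unconstrained, and the local configurations around every triangle and every pentagon are in fact consistent. The paper's argument first establishes that every pentagon must be of type $4$ and that $\nabla$-triangles cannot occupy two consecutive vertices flanking a pentagon edge (Observation 2), then shows these constraints force a unique periodic extension of the partial face-cycle around a $k_4$-gon on either side of a type-$4$ pentagon, and finally obtains the contradiction because the two forced sequences run with opposite orientations and can never meet to close the face-cycle around the $k_4$-gon (Figure \ref{3578k}). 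That obstruction lives on the large face and is invisible in any bounded neighbourhood of a triangle or pentagon, so "filling the second shell around a triangle" will not detect it.

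The existence half has a matching gap: to certify that the exclusion list is exactly right you must actually produce maps of types $[3,5,5,p]$ and $[3,5,k_3,k_4]$ for $10\le k_3<k_4$, $k_3\ne 11$, and neither "extra room afforded by the large faces" nor a surgery on a homogeneous map from \cite{EEK2} obviously delivers these. The paper needs a two-stage t-layer construction in which triangles are attached to the pentagons first, in a prescribed pattern governed by an inductive hypothesis on the admissible sequences of faces between consecutive pentagons on the boundary, precisely so that Observation 2 is never violated while the layers close up. The sharpness of the boundary --- existence at $k_3=10$ and $12$, failure at $k_3=11$ and at $k_3=9$ --- is evidence that no soft argument settles this family; your proposal correctly localizes the difficulty but does not supply the mechanism that resolves it in either direction.
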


\begin{proof}
We first prove the \enquote{only if} direction of the theorem, i.e., none of the types (1)--(3) listed in the statement admits a tiling.
\par
\textbf{Type $[3, 3, k_3, k_4]$.} A vertex in any tiling of this type can only be one of the two cyclic vertex-types: $[3^2, k_3, k_4]$ and $[3, k_3,3, k_4]$. By inspection, the neighborhood of a $k_3$-gon (and, a $k_4$-gon) in such a tiling has the same (cyclic) repeating pattern of fans of type $[3^2, k_3, k_4]$ around two consecutive vertices followed by a fan of type $[3, k_3, 3, k_4]$ around a third vertex. Hence the boundary edges of a $k_3$–gon are partitioned into blocks of length $3$, so $k_3$ is a multiple of $3$. The same argument gives $k_4 \equiv 0 \pmod 3$.
\par
\textbf{Type $[3, 4, 5, 5]$.} Type-$5$ pentagon is ruled out in such a tiling as we cannot form a neighborhood of it. We verify that a type-$4$ pentagon $P$ leads to an invalid fan in its second layer, as shown in Fig.~\ref{fig:3455}-(a) around the encircled vertex. So, there can only be type-$3$ pentagons in such a tiling. On the other hand, a type-$2$ quadrilateral $Q$ leads, without loss of generality, to a type-$4$ pentagon $P$, as shown in Fig.~\ref{fig:3455}-(b). With these constraints, starting with a type-$3$ pentagon, $P$, we arrive at an invalid fan around the encircled vertex shown in Fig.~\ref{fig:3455}-(c).

\begin{figure}[H]
\centering
\subfigure[Around a type-4 pentagon]{
\begin{tikzpicture}[scale=0.25,
 x=0.05cm, y=-0.05cm,
 line cap=round,
 line join=round,
 thick
 ]

\coordinate (P_Center) at (232,375);
\coordinate (A) at (186,359);
\coordinate (B) at (238,416);
\coordinate (C) at (290,376);
\coordinate (D) at (274,328);
\coordinate (E) at (215,317);
\coordinate (CircleVert) at (317,424);
\coordinate (F) at (193,286);
\coordinate (G) at (230,287);
\coordinate (H) at (285,289);
\coordinate (I) at (328,320);
\coordinate (J) at (343,388);
\coordinate (K) at (360,342);
\coordinate (L) at (391,303);
\coordinate (M) at (410,355);
\coordinate (N) at (386,412);
\coordinate (O) at (413,394);
\coordinate (P) at (365,450);
\coordinate (Q) at (252,450);
\coordinate (R) at (173,412);
\coordinate (S) at (296,262);
\coordinate (T) at (327,262);
\coordinate (U) at (352,282);
\coordinate (V) at (264,251);
\coordinate (W) at (231,259);

\draw (A) -- (B) -- (C) -- (D) -- (E) -- cycle;
\draw (E) -- (F) -- (G) -- (E);
\draw (D) -- (H) -- (I) -- (D);
\draw (C) -- (J) -- (CircleVert) -- (C);
\draw (CircleVert) -- (P) -- (N) -- (O) -- (M) -- (L) -- (K) -- (I);
\draw (K) -- (J);
\draw (J) -- (N);
\draw (CircleVert) -- (Q) -- (B);
\draw (B) -- (R) -- (A);
\draw (G) -- (H);
\draw (H) -- (S) -- (T) -- (U) -- (L);
\draw (I) -- (U);
\draw (S) -- (V) -- (W) -- (G);
\draw (K) -- (M);

\node[font=\bfseries] at (P_Center) {P};
\node[draw, circle, inner sep=3pt, thick] at (CircleVert) {};

\end{tikzpicture}
}\hspace{1cm} \centering
\subfigure[Around a type-2 quadrilateral]{
\begin{tikzpicture}[scale=0.25,
 x=0.05cm, y=-0.05cm,
 line cap=round,
 line join=round,
 thick
 ]

\coordinate (Q_TR) at (326,284);
\coordinate (Q_TL) at (266,284);
\coordinate (Q_BL) at (267,349);
\coordinate (Q_BR) at (324,348);
\coordinate (P_R) at (333,386);
\coordinate (P_Top) at (293,402);
\coordinate (P_L) at (246,385);
\coordinate (P_FarL_1) at (204,382);
\coordinate (P_FarL_2) at (192,352);
\coordinate (P_Ext_L) at (233,316);
\coordinate (P_Ext_R) at (365,315);
\coordinate (Out_R1) at (403,293);
\coordinate (Out_R2) at (414,350);
\coordinate (Out_R3) at (443,318);
\coordinate (Out_R4) at (403,252);
\coordinate (Out_R5) at (349,251);
\coordinate (Out_R6) at (399,402);
\coordinate (Out_L1) at (197,286);
\coordinate (Out_L2) at (269,246);
\coordinate (Out_L3) at (156,312);
\coordinate (Out_L4) at (214,226);
\coordinate (Tip_Bottom) at (313,220);
\coordinate (Tip_R_Top) at (381,225);
\coordinate (Tip_R_Far) at (446,291);
\coordinate (Tip_R_Low) at (431,383);
\coordinate (Tip_P_Top) at (330,410);
\coordinate (Tip_P_Left) at (219,404);
\coordinate (Label_Q) at (291,324);
\coordinate (Label_P) at (290,382);

\draw (Q_TR) -- (Q_TL) -- (Q_BL) -- (Q_BR) -- cycle;
\draw (Q_TR) -- (P_Ext_R) -- (Q_BR);
\draw (Q_TL) -- (P_Ext_L) -- (Q_BL);
\draw (Q_BL) -- (P_L);
\draw (Q_BR) -- (P_R) -- (P_Top) -- (P_L) -- (P_FarL_1) -- (P_FarL_2) -- (P_Ext_L) -- (Out_L1);
\draw (Q_TL) -- (Out_L2);
\draw (Q_TR) -- (Out_R5);
\draw (Out_L2) -- (Tip_Bottom) -- (Out_R5);
\draw (Out_R5) -- (Out_R4) -- (Out_R1) -- (P_Ext_R);
\draw (Out_R1) -- (Out_R3) -- (Out_R2) -- (P_Ext_R);
\draw (Out_R2) -- (Out_R6) -- (P_R);
\draw (Out_L1) -- (Out_L3) -- (P_FarL_2);
\draw (Out_L2) -- (Out_L4) -- (Out_L1);
\draw (Out_R4) -- (Tip_R_Top) -- (Out_R5);
\draw (Out_R1) -- (Tip_R_Far) -- (Out_R3);
\draw (Out_R2) -- (Tip_R_Low) -- (Out_R6);
\draw (P_R) -- (Tip_P_Top) -- (P_Top);
\draw (P_L) -- (Tip_P_Left) -- (P_FarL_1);

\node[font=\bfseries] at (Label_P) {P};
\node[font=\bfseries] at (Label_Q) {Q};

\end{tikzpicture}
} \hspace{1cm} \centering
\subfigure[Around a type-3 pentagon]{
\begin{tikzpicture}[scale=0.25,
 x=0.05cm, y=-0.05cm,
 line cap=round,
 line join=round,
 thick
 ]

\coordinate (C_TL) at (242,383);
\coordinate (C_TR) at (289,399);
\coordinate (C_BR) at (274,461);
\coordinate (C_BL) at (246,426);
\coordinate (L_Tip1) at (163,426);
\coordinate (L_Tip2) at (154,382);
\coordinate (L_Tip3) at (169,357);
\coordinate (L_Low) at (191,470);
\coordinate (L_Conn) at (207,440);
\coordinate (L_M) at (200,388);
\coordinate (B_L) at (251,497);
\coordinate (B_R) at (309,499);
\coordinate (B_Pk) at (317,433);
\coordinate (B_F) at (353,470);
\coordinate (B_Tip) at (377,493);
\coordinate (R_InT) at (318,405);
\coordinate (R_InB) at (349,413);
\coordinate (R_M) at (389,424);
\coordinate (R_Tip1) at (392,466);
\coordinate (R_Tip2) at (430,464);
\coordinate (R_Tip3) at (434,437);
\coordinate (R_V) at (421,412);
\coordinate (R_Circ) at (406,382);
\coordinate (T_RM) at (357,363);
\coordinate (T_RT) at (394,347);
\coordinate (T_MR) at (342,341);
\coordinate (T_ML) at (319,360);
\coordinate (T_TP) at (323,325);
\coordinate (T_LP) at (301,316);
\coordinate (T_LM) at (293,350);
\coordinate (T_FL) at (275,322);
\coordinate (T_C) at (242,344);
\coordinate (TL_P) at (252,317);
\coordinate (TL_M) at (220,322);
\coordinate (TL_L) at (192,340);
\coordinate (Label_P) at (240,471);

\draw (L_Low) -- (L_Conn) -- (L_Tip1) -- cycle;
\draw (B_L) -- (C_BR) -- (B_R) -- cycle;
\draw (L_Conn) -- (C_BL) -- (C_BR);
\draw (L_Low) -- (B_L);
\draw (C_BL) -- (C_TL) -- (C_TR) -- cycle;
\draw (L_Conn) -- (L_M);
\draw (L_M) -- (C_TL);
\draw (L_Tip1) -- (L_Tip2) -- (L_Tip3) -- (L_M);
\draw (L_Tip3) -- (TL_L) -- (L_M);
\draw (C_BR) -- (B_Pk);
\draw (B_R) -- (B_F);
\draw (B_Pk) -- (B_F);
\draw (B_F) -- (B_Tip);
\draw (B_F) -- (R_Tip1) -- (B_Tip);
\draw (C_TR) -- (R_InT) -- (B_Pk);
\draw (R_InT) -- (R_InB) -- (B_Pk);
\draw (R_InB) -- (R_M) -- (R_Tip1);
\draw (R_M) -- (R_Circ) -- (R_V) -- cycle;
\draw (R_V) -- (R_Tip3) -- (R_Tip2) -- (R_Tip1);
\draw (R_InB) -- (T_RM) -- (R_Circ);
\draw (R_InT) -- (T_ML) -- (T_MR) -- (T_RM);
\draw (T_RM) -- (T_RT) -- (R_Circ);
\draw (T_ML) -- (T_LM) -- (C_TR);
\draw (T_ML) -- (T_TP) -- (T_MR);
\draw (C_TL) -- (T_C) -- (T_FL) -- (T_LM);
\draw (T_FL) -- (T_LP) -- (T_LM);
\draw (TL_L) -- (TL_M) -- (T_C);
\draw (T_C) -- (TL_P) -- (TL_M);

\node[font=\bfseries] at (Label_P) {P};
\node[draw, circle, thick, inner sep=2.5pt] at (R_Circ) {};

\end{tikzpicture}
}
\captionof{figure}{Dead ends in the tiling of type $[3, 4, 5, 5]$}
\label{fig:3455} 
\end{figure}

\textbf{Type $[3, 4, k_3, k_4]$, $k_3$ and $k_4$ odd and $k_3<k_4$.} By Observation \ref{A1}, in a tiling of this type all the 4-gons must be either of type-2 or type-4. It follows that a neighborhood of a $k_3$-gon must be a composition of two kinds of partial neighborhoods shown in Fig.~\ref{fig:34k3k4}-(a) and Fig.~\ref{fig:34k3k4}-(b). Note that a partial neighborhood of the first kind covers four edges and the second kind covers two edges of the $k_3$-gon. Thus, for a valid neighborhood of a $k_3$-gon, $k_3$ must be even. The same parity argument applies to the $k_4$-gons.

\begin{figure}[H]
\tikzstyle{ver}=[]
\tikzstyle{vert}=[circle, draw, fill=black!100, inner sep=0pt, minimum width=4pt]
\tikzstyle{vertex}=[circle, draw, fill=black!00, inner sep=0pt, minimum width=4pt]
\tikzstyle{edge} = [draw,thick,-]
\centering
\subfigure[Type-1]{
\begin{tikzpicture}[scale=0.12]
\draw[edge, thick](70,25)--(75.5,25);
\draw[edge, thick](70,25)--(70,28)--(75.5,28)--(75.5,25);
\draw[edge, thick](70,25)--(65,22)--(64,17.5)--(65,14.5)--(67,12.8)--(70,11.2)--(72.7,10.8);
\draw[edge, thick](69.9,25)--(65.2,26.2)--(65,22)--(60.8,20.8);
\draw[edge, thick](75.5,25)--(80,22)--(81,17.7)--(80,14.5)--(78.5,12.8)--(75.5,11.2)--(72.7,10.8);
\draw[edge, thick](75.6,25)--(79.9,26.2)--(80,22)--(84,21.2);
\draw[edge, thick](83.5,16)--(81.1,17.7)--(84, 18.5);
\draw[edge, thick](61,16)--(63.9,17.5)--(60.8, 18.3);
\node[ver] () at (72.6,26.5){\scriptsize ${4}$};
\node[ver] () at (66.8,24.3){\scriptsize ${3}$};
\node[ver] () at (78.3,24.5){\scriptsize ${3}$};
\node[ver] () at (82.5,20.1){\scriptsize ${k_{4}}$};
\node[ver] () at (82,22.7){\scriptsize ${4}$};
\node[ver] () at (63,20){\scriptsize ${k_{4}}$};
\node[ver] () at (68.8,26.6){\scriptsize ${k_{4}}$};
\node[ver] () at (77,26.8){\scriptsize ${ k_{4}}$};
\node[ver] () at (73.1,17.5){\scriptsize ${ k_{3}}$};
\end{tikzpicture}}
\subfigure[Type-2]{
\begin{tikzpicture}[scale=0.15]
\draw[edge, thick](90,45)--(95,45);
\draw[edge, thick](90,45)--(90,48.8)--(95,48.8)--(95,45);
\draw[edge, thick](90,45)--(87,46.8)--(90,48.8);
\draw[edge, thick](95,48.8)--(98,46.8)--(95,45);
\draw[edge, thick](90,45)--(87,43.2)--(86.0,40.5)--(86.3,39)--(87.5,37.5)--(89.5,36)--(92.5,35.4);
\draw[edge, thick](95,45)--(98,43.2)--(99.0,40.5)--(98.7,39)--(97.3,37.3)--(95.3,36)--(92.5,35.4);
\draw[edge, thick] (84.5, 43.2)--(87.0,43.2)--(85,45.2);
\draw[edge, thick] (100.5, 43.2)--(98.0,43.2)--(100,44.8);
\node[ver] () at (92.5,46.8){\scriptsize ${4}$};
\node[ver] () at (88.9,46.8){\scriptsize ${3}$};
\node[ver] () at (96.2,46.8){\scriptsize ${3}$};
\node[ver] () at (92.7,40.5){\scriptsize ${ k_{3}}$};
\node[ver] () at (87.7,45){\scriptsize ${ k_{4}}$};
\node[ver] () at (97.3,45){\scriptsize ${ k_{4}}$};

\end{tikzpicture}}
\captionof{figure}{Partial neighborhoods of $k_3$-gon for the type $[3, 4, k_3, k_4]$}
\label{fig:34k3k4}
\end{figure}

\textbf{Type $[3, 4, 6, 8]$.} By Observation \ref{A1}, at least one $k_4=8$-gon must have a partial neighborhood of Type-2 above (Fig.~\ref{fig:34k3k4} -(b)) in the face-cycle of any triangle. To form a valid neighborhood of such an $8$-gon, at least two consecutive partial neighborhood of Type-2 is required. Consequently, we cannot form a valid neighborhood around the adjacent 6-gon opposite to the type-4 face adjacent to the $8$-gon, as shown by the mismatch around the encircled vertex in Fig.~\ref{fig:3568}. 
\begin{figure}[H] 
\centering
\begin{minipage}{.6\textwidth} 
\centering
\begin{tikzpicture}[scale=0.6,
 x=0.05cm, y=-0.05cm,
 line cap=round,
 line join=round,
 thick
 ]

\coordinate (C1) at (254.1,125.3);
\coordinate (C2) at (281.6,99.4);
\coordinate (C3) at (313.4,99.1);
\coordinate (C4) at (339.6,121.5);
\coordinate (C5) at (339.6,162.5);
\coordinate (C6) at (314.4,184.5);
\coordinate (C7) at (280.3,184.5);
\coordinate (C8) at (254.4,158.7);

\coordinate (R_TT) at (346.8,171.0);
\coordinate (R_CM) at (356.0,161.5);
\coordinate (R_QT) at (355.7,120.2);
\coordinate (R_M1) at (371.8,130.9);
\coordinate (R_M2) at (371.0,153.4);

\coordinate (R_ET1) at (363.8,113.4);
\coordinate (R_ET2) at (380.1,124.8);
\coordinate (R_EB1) at (378.1,160.0);
\coordinate (R_EB2) at (362.9,168.0);

\coordinate (R_V) at (381.6,143.5);
\coordinate (R_T1) at (392.8,136.9);
\coordinate (R_T2) at (392.3,149.6);

\coordinate (T_H) at (313.2,87.7);
\coordinate (T_D1) at (322.6,91.1);
\coordinate (T_D2) at (347.8,113.3);

\coordinate (B_L) at (313.7,194.9);
\coordinate (B_D) at (321.9,192.1);

\coordinate (L_T) at (281.6,88.8);
\coordinate (L_MT) at (254.5,99.8);
\coordinate (L_FL1) at (239.0,125.3);
\coordinate (L_FL2) at (239.0,158.7);
\coordinate (L_MB) at (253.8,184.9);
\coordinate (L_B) at (280.3,195.5);

\draw (C1) -- (C2) -- (C3) -- (C4) -- (C5) -- (C6) -- (C7) -- (C8) -- cycle;

\draw (C4) -- (R_QT) -- (R_M1) -- (R_M2) -- (R_CM) -- (C5);

\draw (C5) -- (R_CM) -- (R_TT) -- cycle;
\draw (R_TT) -- (B_D) -- (C6);

\draw (R_QT) -- (R_ET1) -- (R_ET2) -- (R_M1);

\draw (R_CM) -- (R_EB2) -- (R_EB1) -- (R_M2);

\draw (R_M1) -- (R_V) -- (R_M2);

\draw (R_V) -- (R_T1);
\draw (R_V) -- (R_T2);

\draw (C3) -- (T_H);
\draw (C3) -- (T_D1) -- (T_D2);
\draw (R_QT) -- (T_D2);
\draw (C4) -- (T_D2);

\draw (C6) -- (B_L) -- (B_D);

\draw (L_T) -- (C2);
\draw (C2) -- (L_MT) -- (C1);
\draw (C1) -- (L_FL1) -- (L_FL2) -- (C8);
\draw (C8) -- (L_MB) -- (C7);
\draw (C7) -- (L_B);

\node[draw, circle, inner sep=1.7pt] at (381.6,143.5) {};

\node[font=\small] at (295.0,145.0) {8};
\node [font=\small] at (350.0,145.0) {6};
\node [font=\small] at (355.5,114.0) {8};
\node [font=\small] at (355.5,169.0) {8};
\node [font=\small] at (293.0,94.0) {6};
\node [font=\small] at (293.0,191.0) {6};
\node [font=\small] at (382.0,135.0) {8};
\node [font=\small] at (382.0,151.0) {8};

\end{tikzpicture}

\captionof{figure}{Neighborhood of a $8$-gon for type $[3, 4, 6, 8]$ }
\label{fig:3568} 
\end{minipage}
\end{figure}
\textbf{Type $[3, 5, k_3, k_4]$.} We divide this case into further subcases depending on values of $k_3$. Along with Observation \ref{A2}, we note that a type-$4$ pentagon forces a partial neighborhood of the $k_3$-gon (or $k_4$-gon) with two $\Delta$-triangles attached to two alternating edges; see Fig.~\ref{35all}.

\begin{figure}[H] 
\tikzstyle{ver}=[]
\tikzstyle{vert}=[circle, draw, fill=black!100, inner sep=0pt, minimum width=4pt]
\tikzstyle{vertex}=[circle, draw, fill=black!00, inner sep=0pt, minimum width=4pt]
\tikzstyle{edge} = [draw,thick,-]
\centering
\begin{tikzpicture}[scale=0.12]
\draw[edge, thick](45,42)--(60,42);
\draw[edge, thick](45,42)--(43.9,46)--(45.2,50);
\draw[edge, thick](60,42)--(61.1,46);
\draw[edge, thick](45,42)--(41.2,43.7)--(43.9,46)--(42,48.2);
\draw[edge, thick](60,42)--(63.8,43.7)--(61.1,46)--(62.5,48.2);
\draw[edge, thick](45,42)--(46,38)--(52.5,36)--(58.9,38)--(60,42);
\draw[edge, thick](46,38)--(42.2,38.2)--(46.3,35)--(46,38);
\draw[edge, thick](52.5,36)--(57,35.3)--(58.9,38);
\draw[edge, thick](54,42)--(61.1,46)--(59,50);
\draw[edge, thick](45.2,50)--(46.5,52)--(48,53)--(50.5,54)--(53,54)--(55,53.5)--(57,52.5)--(59,50);
\node[ver] () at (52.5,46.5){\scriptsize ${ k_{3}}$};
\node[ver] () at (43.6,49){\scriptsize ${5}$};
\node[ver] () at (61.2,49){\scriptsize ${5}$};
\node[ver] () at (51,39.5){\scriptsize ${5}$};
\node[ver] () at (54,39.5){\scriptsize ${P}$};
\end{tikzpicture}
\captionof{figure}{A neighborhood of a $k_3$-gon for $[3, 5, k_3, k_4]$}
\label{35all} 
\end{figure}
\textbf{For $k_3=6, 9$.} 
It is straightforward to verify that this partial neighborhood of the $k_3$-gon cannot be extended to a (full) neighborhood without contradicting Observations \ref{A1}--\ref{A2}.
\par
\textbf{For $k_3=8$.} Note that there is an $8$-gon in a neighborhood of a type-$4$ pentagon with a (at least one) $\nabla$-triangle around it. It is again straightforward to verify that we cannot form a neighborhood of the $8$-gon which is consistent with Observations \ref{A1}--\ref{A2}. 
\par
\textbf{For $[3, 5, 7, k_4]$.} In light of Observations \ref{A1}--\ref{A2}, it is straightforward to verify that for a valid neighborhood, each $7$-gon in such a tiling must have only one $\nabla$-triangle attached to it. This along with observations \ref{A1}--\ref{A2} forces a unique extension of the partial neighborhood of a particular $k_4$-gon adjacent to a type-$4$ pentagon used above for $k_3$-gon for $k_3=6, 9$; see Fig.~\ref{3578k}. One can also see that the orientation of the same repetitive sequence of faces on two sides of the type-$4$ pentagon $P_5$ is opposite, so they can never meet to form a neighborhood of the $k_4$-gon. 

\begin{figure}[H] 
\tikzstyle{ver}=[]
\tikzstyle{vert}=[circle, draw, fill=black!100, inner sep=0pt, minimum width=4pt]
\tikzstyle{vertex}=[circle, draw, fill=black!00, inner sep=0pt, minimum width=4pt]
\tikzstyle{edge} = [draw,thick,-]
\begin{minipage}{.5\textwidth} 
\centering
\begin{tikzpicture}[scale=0.15]
\draw[edge, thick](45,42)--(59,42);
\draw[edge, thick](45,42)--(44,45.5)--(44.2,47.5)--(45,49.5);
\draw[edge, thick](59,42)--(60.1,45.5);
\draw[edge, thick](45,42)--(46,40)--(52,38)--(58,40)--(59,42);
\draw[edge, thick](45,49.5)--(46.5,52)--(48,53)--(50,54)--(53.5,54.1)--(55,53.7)--(57,52.7)--(58.5,51.5)--(59.5, 49.5)--(60.2,47.5)--(60.1,45.5);
\draw[edge, thick](59,42)--(61.5,43.6)--(60.1,45.5)--(62,45.6);
\draw[edge, thick](60.2,47.5)--(61.5,48.6)--(61.9,46.8)--(60.2,47.5);
\draw[edge, thick](61,49.6)--(59.5, 49.5)--(60.5, 51.3)--(58.5, 51.5)--(59.5, 52.5);
\draw[edge, thick](48, 53)--(46.3,53.8)--(47.5,54.7)--(48, 53);
\draw[edge, thick](45.3,53.1)--(46.5,52)--(44.2,51.7)--(45,49.5)--(43.5,50);
\draw[edge, thick](44.2, 47.5)--(42.3,47)--(43,49)--(44.2, 47.5);
\draw[edge, thick](45,42)--(42.5,43.7)--(44,45.5)--(42.1,45.6);
\node[scale=1.25] at (52.5,46.5){\scriptsize ${ k_{4}}$};
\node[ver] () at (44,49){\scriptsize ${5}$};
\node[ver] () at (43.5,46.5){\scriptsize ${7}$};
\node[ver] () at (48.5,54){\scriptsize ${5}$};
\node[ver] () at (46.5,52.8){\scriptsize ${7}$};
\node[ver] () at (60.5,48.8){\scriptsize ${5}$};
\node[ver] () at (60.7,46.3){\scriptsize ${7}$};
\node[ver] () at (58.3,52.5){\scriptsize ${7}$};
\node[scale=1.2] at (52,39.8){\scriptsize ${P_5}$};
\end{tikzpicture}
\captionof{figure}{Neighborhood of a $k_4$-gon for $[3, 5, 7, k_4]$}
\label{3578k} 
\end{minipage}%
\begin{minipage}{.6\textwidth} 
\centering
\begin{tikzpicture}[scale=0.15]
\draw[edge, thick](47,42)--(58,42);
\draw[edge, thick](58,42)--(60.1,45);
\draw[edge, thick](45.5,50.5)--(48,53)--(52,54.1)--(56,53.3)--(59, 51)--(60.2,48)--(60.1,45);
\draw[edge, thick](45.5,50.5)--(44.5,47.5)--(45,45)--(47,42);
\draw[edge, thick](47,42)--(45,40)--(46.5,38.5)--(48,37.5)--(51,37)--(53.5,37)--(56.5,37.5)--(58.5,38.5)--(60,40)--(58,42);
\draw[edge, thick](47,42)--(44,42.5)--(42,44)--(43,45)--(45,45);
\draw[edge, thick](42.5,49)--(44.5,47.5)--(42.5,47)--(45,45);
\draw[edge, thick](45,40)--(44,42.5);
\draw[edge, thick](62.2,48.7)--(60.2,48)--(62.3,47)--(60.1,45);
\draw[edge, thick](58,42)--(61,42)--(63,43.5)--(62,45)--(60.1,45);
\draw[edge, thick](61,42)--(60,40);
\node[scale=1.25] at (52.5,46.5){\scriptsize ${11}$};
\node[ver] () at (44,49){\scriptsize ${k_4}$};
\node[ver] () at (44.5,43.5){\scriptsize ${5}$};
\node[ver] () at (60.8,49.5){\scriptsize ${k_4}$};
\node[ver] () at (61,43.5){\scriptsize ${5}$};
\node[scale=1.25] () at (53,39.8){\scriptsize ${k_4}$};
\node[scale=1.5] at (50,39.8){\scriptsize ${U}$};
\end{tikzpicture}
\caption{Partial neighborhood of a $11$-gon}
\label{fig:3511k4}
\end{minipage}
\end{figure}

\textbf{For $[3, 5, 10, 11]$ and $[3, 5, 11, k_4]$.} We observe that for tuple  $[3, 5, 11, k_4]$, $\nabla$-triangles cannot be attached to a $k_4$-gon at two consecutive vertices which are also incident to an edge of an $11$-gon. Such configuration forces a partial neighborhood of the $11$-gon that cannot be extended to a neighborhood consistent with observations \ref{A1}--\ref{A2}, as shown in Fig.~\ref{fig:3511k4} ($k_4$-gon $U$). This constraint, combined with Observations \ref{A1}--\ref{A2} force a unique extension of the partial neighborhood of a particular $k_4$-gon adjacent to a type-$4$ pentagon as shown in Fig.~ \ref{3578k} with the $7$-gons replaced by $11$-gons. Thus, using the same argument used for the tuple $[3, 5, 7, k_4]$, it is impossible to form a neighborhood of the $k_4$-gon. Hence there does not exist a tiling of type $[3, 5, 11, k_4]$. The same argument applies to the tuple $[3, 5, 10, 11]$ by treating $10$-gon as $k_4$-gon.
\par
We now present the proof of the \enquote{if} (existence) part of the theorem. This is also carried out in a case by case manner. 
\par
\textbf{Type $[3, 3, 3p, 3q]$.} The existence of a tiling is shown in Corollary \ref{33pq} below.
\par
\textbf{Type $[3, 4, p, p]$, $p \geq 6$.} By Lemma \ref{twoeven}, we may assume $p \geq 7$. In this case, we modify the previous layer-by-layer construction. The possible in-degrees of a vertex on $\partial X_k$ are $0, 1$, or $2$. Vertices of in-degree $2$ occur only in the configurations shown in Fig.~\ref{34pp1} (the vertices $u_i$ and $v$) and in Fig.~\ref{34pp2} (the vertex $v_i$). Observe that no two consecutive vertices on $\partial X_k$ have in-degree $2$. First, for all the vertices of in-degree $2$ vertex on $\partial X_k$ arising from face sequence $(p, 4, p)$, we modify $X_k$ by attaching a triangle, illustrated in Fig.~\ref{34pp1} by the triangle incident to the vertex $v$ with one curved side. We then proceed with the inductive layer construction. In the remaining two cases of vertices of in-degree $2$ arise when a triangle is attached to $\partial X_k$ at a single vertex by faces in $X_k$ corresponding to: (a) $\{p, 3, p\}$ and (b) $\{4, 3, p\}$. 
\par
Case (a) $\{p, 3, p\}$. In this case, as shown in Fig.~\ref{34pp1} (at vertex $u_i$ on $\partial X_{k-2}$), we require that the $4$-gon incident to $u_i$ is preceded and succeeded by $p$-gons. Since $p\geq 7$, there are at least three free vertices $u_{i-3}, u_{i-2}, u_{i-1}$ preceding $u_i$. Consequently, regardless of the face $S$ adjacent to $u_{i-3}$, we can choose the other two faces in the fan around $u_{i-3}$ such that $T$ is a $p$-gon. This yields the desired configuration around $u_i$. 
\par
 Case (b) $[4, 3, p]$. Here, we observe that that the vertices of the triangles are preceded by two vertices of in-degree $2$ as shown in Fig.~\ref{34pp2}. This provides sufficient flexibility to choose the faces appropriately. It is straightforward to verify that no obstacle to the induction exists in any of these cases.. 
\begin{figure}[H] 
\begin{minipage}{.50\textwidth} 
\centering 
\begin{tikzpicture}[scale=0.5,
	x=0.6mm, y=-0.6mm,
	line cap=round, 
	line join=round,
	thick
	]
		
	\def\yTop{208}
	\def\yMid{248}
	\def\yBot{279}
		
	\coordinate (ST) at (228, \yTop);
	\coordinate (ET) at (420, \yTop);
		
	\coordinate (SM) at (229, \yMid);
	\coordinate (EM) at (426, \yMid);
		
	\coordinate (SB) at (266, \yBot);
	\coordinate (EB) at (402, \yBot);
		
	\coordinate (t1) at (249, \yTop);
	\coordinate (t2) at (267, \yTop);
	\coordinate (t3) at (278, \yTop);
	\coordinate (t4) at (304, \yTop);
	\coordinate (t5) at (330, \yTop);
	\coordinate (t6) at (372, \yTop);
	\coordinate (t7) at (395, \yTop);
	\coordinate (t5a) at (350, \yTop);
		
	\coordinate (m1) at (246, \yMid);
	\coordinate (m2) at (270, \yMid);
	\coordinate (m3) at (285, \yMid);
	\coordinate (m4) at (322, \yMid);
	\coordinate (m5) at (347, \yMid);
	\coordinate (m6) at (376, \yMid);
	\coordinate (m7) at (412, \yMid);
		
	\coordinate (b1) at (278, \yBot);
	\coordinate (b2) at (319, \yBot);
	\coordinate (b3) at (368, \yBot);
		
	\draw (ST) -- (ET);
	\draw (SM) -- (EM);
	\draw (SB) -- (EB);
		
	\draw (t1) -- (m2) -- (t2);
	\draw (m4) -- (t5a) -- (m6);
	\draw (b1) -- (m1);
	\draw (m3) -- (t3);
	\draw (m3) -- (t4) -- (m4);
	\draw (b2) -- (m5) -- (b3);
	\draw (m6) -- (t7) -- (m7);
		
	\draw[thin] (t5) .. controls (342, 192) and (360, 190) .. (t6); 
	\draw[dashed] (t5) -- (318, 185);
	\draw[dashed] (t6) -- (382, 185);
		
	\draw (b2) -- +(0, 12);
	\draw (b3) -- +(0, 12);
		
	\node[anchor=east] at (ST) {$\partial X_k$};
	\node[anchor=east] at (SM) {$\partial X_{k-1}$};
	\node[anchor=east] at (SB) {$\partial X_{k-2}$};
		
	\node at (255, 239) {$S$};
	\node at (287, 227) {$T$};
		
	\node[font=\footnotesize] at (305, 239) {$3$};
	\node[font=\footnotesize] at (347, 235) {$4$};
	\node[font=\footnotesize] at (415, 239) {$3$};
		
	\node at (301, 270) {$p$};
	\node at (372, 268) {$p$};
	\node at (325, 230) {$p$};
	\node at (375, 230) {$p$};
	\node at (350, 190) {$p$};
	\node [font=\footnotesize] at (350, 200) {$3$};
	\node [font=\footnotesize] at (320, 203) {$4$};
	\node[font=\footnotesize] at (380, 203) {$4$};
	\node[font=\footnotesize] at (345, 265) {$3$};
		
	\node[font=\footnotesize] at (272, 253) {$u_{i\!-\!3}$};
	\node[font=\footnotesize] at (290, 253) {$u_{i\!-\!2}$}; 
	\node[font=\footnotesize] at (322, 253) {$u_{i\!-\!1}$};
	\node[font=\footnotesize] at (356, 252.5) {$u_{i}$};
		
	\node[below, font=\footnotesize, inner sep = 5pt] at (t5a) {$v$};
		
\end{tikzpicture}

\captionof{figure}{Case (a)}
\label{34pp1} 
\end{minipage}%
\begin{minipage}{.50\textwidth} 
\centering	
\begin{tikzpicture}[scale=0.8,
	x=0.6mm, y=-0.6mm,
	line cap=round, 
	line join=round,
	thick
	]
		
\def\yTop{90}
\def\yMid{115}
\def\yBot{141}
		
\coordinate (Start_Top) at (223, \yTop);
\coordinate (End_Top) at (329, \yTop);
		
\coordinate (Start_Mid) at (225, \yMid);
\coordinate (End_Mid) at (330, \yMid);
		
\coordinate (Start_Bot) at (232, \yBot);
\coordinate (End_Bot) at (330, \yBot);
		
\coordinate (T1) at (235, \yTop);
\coordinate (T2) at (244, \yTop);
\coordinate (T3) at (257, \yTop);
\coordinate (T4) at (263, \yTop);
\coordinate (T5) at (281, \yTop);
\coordinate (T6) at (324, \yTop);
\coordinate (T7) at (326, \yTop);
		
\coordinate (M1) at (241, \yMid);
\coordinate (M2) at (256, \yMid);
\coordinate (M3) at (271, \yMid);
\coordinate (M4) at (293, \yMid);
\coordinate (M5) at (312, \yMid);
\coordinate (M6) at (323, \yMid);
\coordinate (M7) at (326, \yMid);
		
\coordinate (B1) at (259, \yBot);
\coordinate (B2) at (283, \yBot);
\coordinate (B3) at (299, \yBot);
\coordinate (B4) at (322, \yBot);

\draw (Start_Top) -- (End_Top);
\draw (Start_Mid) -- (End_Mid);
\draw (Start_Bot) -- (End_Bot);
		
\draw (M1) -- (T1);
\draw (M2) -- (T2);
\draw (M2) -- (T3);
\draw (M3) -- (T4);
		
\draw (M3) -- (T5) -- (M4);
		
\draw (B3) -- (M5) -- (B4);
\draw (T7) -- (M7);		
\draw (B2) -- (M4);
\draw (B2) -- (Start_Mid);
		
\draw (B1) -- +(0, 8);
\draw (B3) -- +(0, 8);
\draw (B4) -- +(0, 8);
		
\node[anchor=east] at (Start_Top) {$\partial X_{k+1}$};
\node[anchor=east] at (Start_Mid) {$\partial X_k$};
\node[anchor=east] at (Start_Bot) {$\partial X_{k-1}$};
		
\node[below, inner sep=1pt, font=\footnotesize] at (M1) {$v_{i\!-\!4}$};
\node[below, inner sep=1pt, font=\footnotesize] at (M2) {$v_{i\!-\!3}$};
\node[below, inner sep=1pt, font=\footnotesize] at (M3) {$v_{i\!-\!2}$};
\node[below right, inner sep=1pt, font=\footnotesize] at (M4) {$v_{i\!-\!1}$};
\node[above, inner sep=1pt, font=\footnotesize] at (M5) {$v_i$};
		
\node at (245,105) {$S$};
\node at (280,105) {$T$};
\node at (305,105) {$U$};
		
\node[font=\footnotesize] at (310,127) {$3$};
\node at (325,127) {$p$};
\node at (280,127) {$p$};
\node at (280,145) {$p$};
\node[font=\footnotesize] at (250,135) {$3$};
		
\end{tikzpicture}
\captionof{figure}{Case (b) }
\label{34pp2}
\end{minipage}
\captionof{figure}{Layer construction for type $[3, 4, p, p]$ }
\end{figure}
\textbf{Type $[3, 5, p, p]$ with $p \geq 5$.} The existence of a tiling of type $[3, 5^3]$ was established in \cite{AM20}, and that of type $[3, 5, 6^2]$ follows from Proposition \ref{twoeven}. For $p \geq 7$, we may employ the construction method used for $[3, 4, p, p]$ above without modifying $\partial X_k$, as no vertex of in-degree $2$ (of the type shown in Fig.~\ref{34pp1}) arises on $\partial X_k$.
\par
\textbf{Type $[3, k_2, k_3, k_4]$, $k_2, k_3, k_4 \geq 6$.} The inductive layer construction method used thus far does not readily apply for these tuples. We modify the construction by first attaching triangles to $\partial X_k$ and subsequently constructing neighborhoods of these triangles as illustrated in Fig.~\ref{fig:35big}. As mentioned in the introduction, these modified layers are referred to as \textit{t-layers}. Evidently, in this construction $C_k$ does not contain any triangle; consequently, no vertex on $\partial X_k$ has in-degree $1$ or greater. Since $k_2, k_3, k_4 \geq 6$, each face in $C_k$ possesses at least two free vertices on $\partial X_k$.
 \begin{figure}[H]
\tikzstyle{ver}=[]
\tikzstyle{vert}=[circle, draw, fill=black!100, inner sep=0pt, minimum width=4pt]
\tikzstyle{vertex}=[circle, draw, fill=black!00, inner sep=0pt, minimum width=4pt]
\tikzstyle{edge} = [draw,thick,-]
\centering
\begin{tikzpicture}[scale=0.12]

\draw[edge, thick](5,5)--(67,5);
\draw[edge, thick](5,15)--(67,15);

\draw[edge, thick](11, 10)--(14,5);
\draw[edge, thick](8, -1)--(8,5);
\draw[edge, thick](11, 10)--(8,5);
\draw[edge, thick](11, 10)--(14,15);
\draw[edge, thick](16, 15)--(14,5);
\draw[edge, thick](11, 10)--(8,15);

\draw[edge, thick](23, 5)--(20,10);
\draw[edge, thick](23, 5)--(26,10);
\draw[edge, thick](20, 10)--(26,10);
\draw[edge, thick](26, 10)--(24,15);
\draw[edge, thick](26, 10)--(27,15);
\draw[edge, thick](20, 10)--(18,15);
\draw[edge, thick](20, 10)--(22,15);

\draw[edge, thick](33, 5)--(30,10);
\draw[edge, thick](33, 5)--(36,10);
\draw[edge, thick](30, 10)--(36,10);
\draw[edge, thick](30, 10)--(29,15);
\draw[edge, thick](30, 10)--(32,15);
\draw[edge, thick](36, 10)--(34,15);
\draw[edge, thick](36, 10)--(37,15);

\draw[edge, thick](43, 10)--(46,5);
\draw[edge, thick](43, 10)--(40,5);
\draw[edge, thick](43, 10)--(45,15);
\draw[edge, thick](43, 10)--(41,15);
\draw[edge, thick](39, 15)--(40,5);
\draw[edge, thick](47, 15)--(46,5);

\draw[edge, thick](53, 5)--(50,10);
\draw[edge, thick](53, 5)--(56,10);
\draw[edge, thick](56, 10)--(50,10);
\draw[edge, thick](50, 10)--(49,15);
\draw[edge, thick](50, 10)--(52,15);
\draw[edge, thick](56, 10)--(54,15);
\draw[edge, thick](56, 10)--(57,15);

\draw[edge, thick](61, 10)--(64,5);
\draw[edge, thick](58, -1)--(58,5);
\draw[edge, thick](61, 10)--(58,5);
\draw[edge, thick](61, 10)--(64,15);
\draw[edge, thick](61, 10)--(59,15);

\node[ver] () at (6.5, 2){\scriptsize $R$};
\node[ver] () at (9, 4){\scriptsize $u$};
\node[ver] () at (30, 2){\scriptsize $P$};
\node[ver] () at (57, 4){\scriptsize $v$};
\node[ver] () at (18,8){\scriptsize $Q$};
\node[ver] () at (37, 8){\scriptsize $Q$};
\node[ver] () at (29,8){\scriptsize $R$};
\node[ver] () at (58, 8){\scriptsize $Q$};
\node[ver] () at (49,8){\scriptsize $R$};
\node[ver] () at (60, 2){\scriptsize $R$};
\node[ver] () at (72,5){\scriptsize ${\partial X_{k}}$};
\node[ver] () at (72.5, 15){\scriptsize ${\partial X_{k+1}}$};

\end{tikzpicture}
\captionof{figure}{t-layer for type $[3, k_2, k_3, k_4]$}
\label{fig:35big} 

\end{figure}

The only obstruction that may arise in constructing t-layers is the following: \textbf{($\mathcal{O}$)} a fan around a vertex $u$ (see Fig.~\ref{fig:35big}) of in-degree $1$ can force an invalid fan around the next vertex $v$ of in-degree $1$ along $\partial X_k$, depending on the sequence of $\Delta$-triangles and $\nabla$-triangles attached to the vertices between $u$ and $v$.
\par
By Lemma \ref{twoeven}, there exists a tiling of type $[3, 6, 6, 6]$. Thus, we may assume that one of the $k_i$ is at least $7$. We also assume $k_2, k_3$ and $k_4$ are distinct labels. We begin by constructing fans around the final (in clockwise direction) vertex of a $7$-gon of in-degree $1$ on $\partial X_k$. We observe that for all possible sequences of faces preceding and succeeding a $6$-gon in $C_{k}$ and for all possible placement of the first triangle attached to a $6$-gon along $\partial X_k$, the triangles can be placed suitably (as shown in Fig.~\ref{fig:3678}) in forward or reverse order so that obstruction \textbf{($\mathcal{O}$)} is avoided. The logic applies for $p>6$, completing the the construction of the t-layer and, consequently, the desired tiling.

\begin{figure}[H]
\tikzstyle{ver}=[]
\tikzstyle{vert}=[circle, draw, fill=black!100, inner sep=0pt, minimum width=4pt]
\tikzstyle{vertex}=[circle, draw, fill=black!00, inner sep=0pt, minimum width=4pt]
\tikzstyle{edge} = [draw,thick,-]
\centering
\begin{minipage}{.30\textwidth} 
\begin{tikzpicture}[scale=0.15]
\draw[edge, thick](1,-40)--(18,-40);
\draw[edge, thick](3,-40)--(3,-43);
\draw[edge, thick](13,-40)--(13,-43);
\draw[edge, thick](3,-40)--(4.8,-37.5)--(6.6,-40)--(7.8,-37.5);
\draw[edge, thick](10.5,-40)--(9,-37.5)--(11.5,-37.5)--(10.5,-40);
\draw[edge, thick](13,-40)--(14.8,-37.5)--(16.6,-40);
\node[ver] () at (-1,-40){\scriptsize $\partial X_{k}$};
\node[ver] () at (2,-41.7){\scriptsize ${q}$};
\node[ver] () at (9,-41.7){\scriptsize ${p}$};
\node[ver] () at (14.5,-41.7){\scriptsize ${r}$};
\node[ver] () at (12,-39){\scriptsize ${q}$};
\node[ver] () at (8,-39){\scriptsize ${q}$};
\end{tikzpicture}
\end{minipage}
\begin{minipage}{.30\textwidth} 
\begin{tikzpicture}[scale=0.15]
\draw[edge, thick](20,-40)--(36,-40);
\draw[edge, thick](22,-40)--(22,-43);
\draw[edge, thick](34,-40)--(34,-43);
\draw[edge, thick](22,-40)--(23.8,-37.5)--(25.6,-40)--(26.8,-37.5);
\draw[edge, thick](34,-40)--(32.2,-37.5)--(30.4,-40)--(29.2,-37.5);
\node[ver] () at (21,-41.7){\scriptsize ${q}$};
\node[ver] () at (26.5,-41.7){\scriptsize ${p}$};
\node[ver] () at (35.5,-41.7){\scriptsize ${q}$};
\node[ver] () at (28,-39){\scriptsize ${r}$};
\end{tikzpicture}
\end{minipage}
\begin{minipage}{.30\textwidth} 
\begin{tikzpicture}[scale=0.15]
\draw[edge, thick](4,-55)--(26,-55);
\draw[edge, thick](9,-55)--(9,-58);
\draw[edge, thick](9,-55)--(7.2,-52.5)--(5.4,-55);
\draw[edge, thick](21,-55)--(21,-58);
\draw[edge, thick](21,-55)--(22.8,-52.5)--(24.6,-55);
\draw[edge, thick](12,-55)--(10.2,-52.5)--(13.8,-52.5)--(12,-55);
\draw[edge, thick](17,-55)--(15.2,-52.5)--(18.8,-52.5)--(17,-55);
\node[ver] () at (8,-56.5){\scriptsize ${q}$};
\node[ver] () at (22,-56.5){\scriptsize ${q}$};
\node[ver] () at (16.5,-56.5){\scriptsize ${p}$};
\node[ver] () at (19,-54){\scriptsize ${r}$};
\node[ver] () at (14.5,-54){\scriptsize ${q}$};
\node[ver] () at (9.5,-54){\scriptsize ${r}$};
\end{tikzpicture}
\end{minipage}
\captionof{figure}{Matching in t-layer for $[3, k_2, k_3, k_4]$}
\label{fig:3678} 
\end{figure}

\textbf{Type $[3, 4, 2p, 2q]$, $3<p<q$.} We will follow the $t$-layer construction described above. We have already noted in the proof of the non-existence part of tiling of type $[3, 4, k_3, k_4]$ for $k_3$ or $k_4$ odd, that the $4$-gons are either of type-2 or type-4, and neighborhoods of the $k_3$ and $k_4$-gons are composed of partial neighborhoods of type-1 or type-2 shown in Fig.~\ref{fig:34k3k4}. It follows that there is a unique way one can attach triangles to the vertices of the $4$-gons on $\partial X_k$. Due to the cyclic repetition of faces around each triangle, a face in $C_k$ must reappear after at most three other faces. In particular, there are three faces ($k_3$ and $k_4$-gons) between two $4$-gons. Then we can form the neighborhoods of $k_3$-gon and $k_4$-gons between any two $4$-gons in $C_k$ by composing partial neighborhoods of type-1 or type-2. 
\par
\textbf{Type $[3, 5, k_3, k_4], k_3, k_4 \geq 5:$} For these tuples, we again construct t-layers. However, the presence of pentagon introduces the risk of violating the Observation \ref{A2}. To overcome this, we modify the constructions of t-layers: we first attach triangles to the vertices of the pentagons on $\partial X_k$ in a prescribed manner, then attach triangles to the remaining faces, and finally construct neighborhoods of these triangles. 
\par
By \textit{$n(r)$-gon} in $C_k$, we mean an $n$-gon that has exactly $n-r$ free vertices. In other words, a $n(r)$-gon appears as a $r$-gon when all its edges on $\partial X_k$ are viewed as a single edge. 
\par
\textbf{Type $[3, 5, 5, p]$, $p \geq 5$.} The existence of a homogeneous tiling of type $[3, 5^3]$ is established in \cite{AM20}; we therefore assume $p \geq 6$. 
\par
\textbf{Step 1.} Inductive hypothesis: a $5(r)$-gon appear in $C_{k}$ (up to order reversal) between two consecutive $p$-gons in one of the following sequences:
\[ \{p, 5(5), p\}, \{p, 5(4), p\}, \{p(3), 5(4), p(5)\}, \{p, 5(5), 5(4), p \}, \{p, 5(3), 5(5), p \} \]

We then place the triangles on the $5$-gons as shown in Fig.~\ref{fig:355parts}. The ambiguity in the orientation of triangles on a $5(4)$-gon in the sequence $\{p, 5(4), p\}$ is resolved as follows: since a $5(4)$-gon can appear in either $\{p(3), 5(4), p(4)\}$ or $\{p(3), 5(4), p(5)\}$, the triangle adjacent to both the $5(4)$-gon and the $p(3)$-gon is placed such that it shares an edge with the $p(3)$-gon; see Fig.~\ref{fig:355parts}-(a).

\begin{figure}[H]
\centering
\tikzstyle{ver}=[]
\tikzstyle{vert}=[circle, draw, fill=black!100, inner sep=0pt, minimum width=4pt]
\tikzstyle{vertex}=[circle, draw, fill=black!00, inner sep=0pt, minimum width=4pt]
\tikzstyle{edge} = [draw,thick,-]
 \begin{minipage}{.50\textwidth}
\begin{tikzpicture}[scale=0.15]

\draw[edge, thick](45,20)--(55,20);

\draw[edge, thick](47,20)--(47,16);

\draw[edge, thick](47,16)--(51,16);
\draw[edge, thick](51,16)--(53,18);
\draw[edge, thick](53,18)--(51,20);

\draw[edge, thick](47,20)--(49,23);

\draw[edge, thick](49,23)--(51,20);

\node[ver] () at (46,19){\scriptsize ${p}$};
\node[ver] () at (49,18){\scriptsize ${5}$};
\node[ver] () at (53,19){\scriptsize ${p}$};

\node[ver] () at (49,21.5){\scriptsize ${3}$};

\draw[edge, thick](57,20)--(70,20);
\draw[edge, thick](62,20)--(62,16)--(68,16)--(68,20);
\draw[edge, thick](62,20)--(61,23)--(60,20.1)--(59,22);
\draw[edge, thick](68,20)--(66.5,23)--(65,20.1)--(64,22);
\node[ver] () at (61,18){\scriptsize ${p}$};
\node[ver] () at (65,18){\scriptsize ${5}$};
\node[ver] () at (69,18){\scriptsize ${p}$};
\node[ver] () at (63,21){\scriptsize ${5}$};
\node[ver] () at (61,21){\scriptsize ${3}$};
\node[ver] () at (66.5,21){\scriptsize ${3}$};

\draw[edge, thick](74,20)--(97,20)--(86,20)--(86,15)--(76,20);
\draw[edge, thick](86,15)--(95,17)--(95,20);
\draw[edge, thick](74,25)--(97,25);
\draw[edge, thick](95,20)--(91,22.5)--(89,20.2)--(87,25);
\draw[edge, thick](91,22.5)--(93,25);
\draw[edge, thick](91,22.5)--(88,25);
\draw[edge, thick](86,20)--(83,22.5)--(81,20.2)--(80,25);
\draw[edge, thick](83,22.5)--(85,25);
\draw[edge, thick](83,22.5)--(81,25);
\draw[edge, thick](76,20)--(77,22.5)--(79,20.2)--(79,22.5);
\node[ver] () at (75,21){\scriptsize ${5}$};
\node[ver] () at (76,18){\scriptsize ${p}$};
\node[ver] () at (82,18.5){\scriptsize ${5}$};
\node[ver] () at (90,18){\scriptsize ${5}$};
\node[ver] () at (96,18){\scriptsize ${p}$};
\node[ver] () at (94,22){\scriptsize ${5}$};
\node[ver] () at (91,24){\scriptsize ${p}$};
\node[ver] () at (89,23){\scriptsize ${5}$};
\node[ver] () at (87,21){\scriptsize ${p}$};
\node[ver] () at (83,24.2){\scriptsize ${5}$};
\node[ver] () at (81.5,23){\scriptsize ${5}$};
\node[ver] () at (80,21){\scriptsize ${p}$};
\node[ver] () at (72,20){\scriptsize ${\partial X_{k}}$};
\node[ver] () at (73,24.5){\scriptsize $\partial X_{k+1}$};
\node[ver] () at (77.3,21){\scriptsize ${3}$};
\node[ver] () at (83,21){\scriptsize ${3}$};
\node[ver] () at (91,21){\scriptsize ${3}$};

\draw[edge, thick](74,25)--(97,25);

\end{tikzpicture}
\end{minipage}%
 \begin{minipage}{.20\textwidth}
 \centering
\begin{tikzpicture}[scale=0.15]

\draw[edge, thick](10,20)--(26,20);
\draw[edge, thick](12,20)--(12,16)--(15,15)--(18,16)--(18,20);
\draw[edge, thick](18,16)--(24,16)--(24,20)--(22.3,22.5)--(20.5,20.2)--(19.5,22.5);
\draw[edge, thick](12,20)--(15,22.5)--(18,20);
\node[ver] () at (11.2,18.5){\scriptsize ${p}$};
\node[ver] () at (15,17.5){\scriptsize ${5}$};
\node[ver] () at (19.5,17.5){\scriptsize ${5}$};
\node[ver] () at (25,18){\scriptsize ${p}$};
\node[ver] () at (18.8,21){\scriptsize ${p}$};
\end{tikzpicture}

\end{minipage}%
 \begin{minipage}{.20\textwidth}
\centering
\begin{tikzpicture}[scale=0.12]

\draw[edge, thick](29,20)--(56,20);
\draw[edge, thick](44,20)--(33,11)--(53.5,11)--(51.5,15)--(44,20);
\draw[edge, thick](51.5,15)--(50,20);
\draw[edge, thick](32.9,11)--(30.5,15)--(37.6,15)--(33,20);
\draw[edge, thick](33,20)--(34.5,23.5)--(36.5,20.3)--(37,23.5);
\draw[edge, thick](41,20.1)--(41,24);
\draw[edge, thick](47,20.1)--(47,24);
\draw[thick](47,20.1) arc (0:180:3);
\node[ver] () at (33,16.5){\scriptsize ${p}$};
\node[ver] () at (33.2,13){\scriptsize ${3}$};
\node[ver] () at (44, 21.5){\scriptsize ${3}$};
\node[ver] () at (38.8,18.5){\scriptsize ${5}$};
\node[ver] () at (44,16){\scriptsize ${5}$};
\node[ver] () at (49,18.5){\scriptsize ${p}$};
\node[ver] () at (52.5,17.8){\scriptsize ${5}$};
\node[ver] () at (49,22){\scriptsize ${5}$};
\node[ver] () at (44,24.5){\scriptsize ${5}$};
\node[ver] () at (38.5,21.5){\scriptsize ${p}$};
\node[ver] () at (59,20){\scriptsize ${\partial X_{k}}$};
\end{tikzpicture}

\end{minipage}

\captionof{figure}{Placement of triangles around pentagons for type $[3, 5, 5, p]$}
\label{fig:355parts}
\end{figure}

\textbf{Step 2.} We will now demonstrate that it is possible to attach triangles to the remaining vertices of $p$-gons to form neighborhoods consistent with the Observation \ref{A2}, and further, the inductive hypothesis holds for $k+1$-the layer. The primary obstacle arises when two $\nabla$-triangles are attached to the $p$-gon at the consecutive vertices, and the faces preceding (say $R$) and succeeding (say $S$) the pentagons between the $\nabla$-triangles are the same as llustrated in Fig.~\ref{355ppart1}. 

\begin{figure}[ht!]
\tikzstyle{ver}=[]
\tikzstyle{vert}=[circle, draw, fill=black!100, inner sep=0pt, minimum width=4pt]
\tikzstyle{vertex}=[circle, draw, fill=black!00, inner sep=0pt, minimum width=4pt]
\tikzstyle{edge} = [draw,thick,-]
\centering
\begin{tikzpicture}[scale=0.15]

\draw[edge, thick](7,3)--(32,3);
\draw[edge, thick](7,10)--(32,10);

\draw[edge, thick](10, 3)--(12,6.5)--(15,3);
\draw[edge, thick](12,6.5)--(16,10);
\draw[edge, thick](16,10)--(21,6.5)--(23, 10);
\draw[edge, thick](10,3)--(8, 10);
\draw[edge, thick](12,6.5 )--(12,10);
\draw[edge, thick](24, 3)--(21,6.5)--(27, 6.5)--(24, 3);

\draw[edge, thick](15, 3)--(15, 0);
\draw[edge, thick](28, 3)--(28, 0);
\draw[edge, thick](24, 10)--(27, 6.5)--(29,10);

\node[ver] () at (30, 1.5){\scriptsize $5$};
\node[ver] () at (12, 1.5){\scriptsize $5$};
\node[ver] () at (20, 1.5){\scriptsize $p$};
\node[ver] () at (13, 8.5){\scriptsize $S$};
\node[ver] () at (20, 8.6){\scriptsize $R$};
\node[ver] () at (23.5, 8.5){\scriptsize $S$};
\node[ver] () at (27, 8.6){\scriptsize $R$};
\node[ver] () at (17, 6.5){\scriptsize $5$};
\node[ver] () at (29, 6.5){\scriptsize $5$};

\node[ver] () at (34,10){\scriptsize ${\partial X_{k}}$};
\node[ver] () at (35.5,3){\scriptsize ${\partial X_{k-1}}$};

\end{tikzpicture}
\captionof{figure}{Around a $p$-gon for type $[3, 5, 5, p]$}
\label{355ppart1}

\end{figure}

We will focus on the non-trivial cases of $p=6$ or $7$ to show how this is overcome; the logic extends naturally to higher values of $p$.
\par
In the $t$-layer construction, following step 1., a $p(r)$-gon can appear in $C_{k}$ in one of the following sequences: 
 \begin{multline*}
 \{ 5(3), p(5), 5(4)\}, \{ 5(3), p(4), 5(5)\}, \{ 5(4), p(4), 5(4)\}, \{ 5(4), p(3), 5(4)\}, \\
 \{ 5(3), p(3), 5(4)\}, \{5(3), 5(5), p(3), 5(4)\}
 \end{multline*}

Note that a $p(r)$-gon in $C_k$ must be attached to a $5(r)$-gon in $C_{k-1}$. This requirement rules out the sequences $\{5(3), p(6), 5(3)\}$ and \{ 5(3), p(4), 5(4)\} etc. 
\par
 A $p(5)$-gon in $C_k$ can arise only from the sequence shown in the third and fourth figure in Fig.~\ref{fig:355parts}. For $p=6$, we attach a $\nabla$-triangle to the remaining free vertex, and for $p=7$, we attach a $\Delta$-triangle to the remaining two free vertices. In the $p=6$ case, we must verify that the face $S$ in Fig.~\ref{355ppart1} can be chosen to be a $p$-gon. Indeed, the $5(3)$-gon preceding the $6$-gon must appear in the sequence $\{p, 5(4), 5(3), p \}$ which is again covered in the manner shown in the third figure in Fig.~\ref{fig:355parts}, consequently, $S$ is a $p$-gon. 
\par
The sequence $ \{ 5(3), p(4), 5(5)\}$ arises exclusively from the sequence $ \{p, 5(4), p\}$ in $C_{k-1}$ as shown in Fig.~\ref{fig:355parts}-(b). Following Step 1., we observe $p(4)$-gon has two remaining free vertices (an edge) to cover; we attach a $\nabla$-triangle to this edge, as shown in Fig.~\ref{355p(4)p2}.
\begin{figure}[ht!]
\tikzstyle{ver}=[]
\tikzstyle{vert}=[circle, draw, fill=black!100, inner sep=0pt, minimum width=4pt]
\tikzstyle{vertex}=[circle, draw, fill=black!00, inner sep=0pt, minimum width=4pt]
\tikzstyle{edge} = [draw,thick,-]
\centering
\begin{tikzpicture}[scale=0.15]

\draw[edge, thick](7,4)--(32,4);
\draw[edge, thick](5,10)--(32,10);

\draw[edge, thick](15, 4)--(15,10);
\draw[edge, thick](15, 4)--(6,10);
\draw[edge, thick](25, 4)--(25,10);
\draw[edge, thick](25, 10)--(27,12);
\draw[edge, thick](29, 10)--(27,12);
\draw[edge, thick](29, 4)--(32,7.5);

\draw[edge, thick](32, 7.5)--(29,10);

\draw[edge, thick](16, 12)--(18, 10)--(20,12)--(22, 10)--(24,12);

\draw[edge, thick](9, 12)--(11, 10)--(13,12)--(15, 10);

\node[ver] () at (15.8, 11.2){\scriptsize $5$};
\node[ver] () at (18.2, 11.5){\scriptsize $5$};
\node[ver] () at (22, 11.5){\scriptsize $5$};
\node[ver] () at (24.5, 11.2){\scriptsize $5$};

\node[ver] () at (13, 7.4){\scriptsize $5(3)$};
\node[ver] () at (20, 6.5){\scriptsize $P(4)$};
\node[ver] () at (28, 6.5){\scriptsize $5 (5)$};

\node[ver] () at (34.5,10){\scriptsize ${\partial X_{k}}$};
\node[ver] () at (35.5,4){\scriptsize ${\partial X_{k-1}}$};

\end{tikzpicture}
\captionof{figure}{Around a $p$-gon for type $[3, 5, 5, p]$}
\label{355p(4)p2}

\end{figure}

The last case of $\{5(3), 5(5), p(3), 5(4)\}$ is shown in Fig.~\ref{355pall}. Now it is straightforward to verify that the inductive hypothesis is satisfied at corona $C_{k+1}$. 

\begin{figure}[H] 
\centering
\begin{minipage}{0.5\textwidth} 
	\begin{tikzpicture}[x=0.25mm, y=-0.25mm,
		line cap=round, 
		line join=round,
		thick
		]
		
		\def\yTop{488}
		\def\yMid{539}
		\def\yBot{592}
		
		\coordinate (Start_Top) at (146, \yTop);
		\coordinate (End_Top) at (450, \yTop);
		
		\coordinate (Start_Mid) at (143, \yMid);
		\coordinate (End_Mid) at (493, \yMid);
		
		\coordinate (Start_Bot) at (175, \yBot);
		\coordinate (End_Bot) at (407, \yBot);
		
		\coordinate (TL_1) at (188, \yTop);
		\coordinate (TL_2) at (205, \yTop);
		\coordinate (TL_3) at (215, \yTop);
		\coordinate (TL_4) at (252, \yTop);
		\coordinate (TL_5) at (292, \yTop);
		\coordinate (TL_4A0) at (316, \yTop);
		\coordinate (TL_4A) at (328, \yTop);
		\coordinate (TL_4B) at (334, \yTop);
		\coordinate (TL_4C) at (345, \yTop);
		\coordinate (TL_6) at (350, \yTop);
		\coordinate (TL_7) at (357, \yTop);
		\coordinate (TL_8) at (385, \yTop);
		\coordinate (TL_9) at (404, \yTop);
		\coordinate (TL_10) at (422, \yTop);
		
		\coordinate (TM_1) at (183, \yMid);
		\coordinate (TM_2) at (212, \yMid);
		\coordinate (TM_3) at (243, \yMid);
		\coordinate (TM_4) at (302, \yMid);
		\coordinate (TM_5) at (350, \yMid);
		\coordinate (TM_5A) at (332, \yMid);
		\coordinate (TM_6) at (374, \yMid);
		\coordinate (TM_7) at (407, \yMid);
		\coordinate (TM_8) at (438, \yMid);
		
		\coordinate (Float_L) at (202, 516);
		\coordinate (Float_R1) at (362, 520);
		\coordinate (Float_R2) at (428, 520);
		\coordinate (Float_R0) at (324, 520);
		\coordinate (Float_R01) at (340, 520);
		
		\draw (TL_4A)--(Float_R0) -- (Float_R01)--(TL_4B);
		\draw (TL_4A0)--(Float_R0);
		\draw (Float_R01)--(TL_4C);
		\coordinate (P_Top) at (276, \yMid);
		\coordinate (P_ML) at (224, 567);
		\coordinate (P_MR) at (322, 565);
		\coordinate (P_BL) at (213, \yBot);
		\coordinate (P_BR) at (329, \yBot);
		
		\coordinate (BL_Mid) at (173, 567);
		\coordinate (BL_Top) at (159, \yMid);
		\coordinate (BR_Mid) at (386, 564);
		\coordinate (BR_Top) at (486, \yMid);

		\draw (Start_Top) -- (End_Top);
		\draw (Start_Mid) -- (End_Mid);
		\draw (Start_Bot) -- (End_Bot);

		\draw (TL_1) -- (Float_L) -- (TL_2);
		\draw (TM_1) -- (Float_L);
		\draw (TL_3) -- (TM_2);
		\draw (TM_5) --(Float_R1);
		\draw (TM_2) --(Float_L);
		
		\draw (TL_4) -- (TM_3);
		\draw (TL_5) -- (TM_4);
		\draw (TM_3) .. controls (262, 524) and (281, 515) .. (TM_4);
		
		\draw (TL_6) -- (TM_5); 
		\draw (TL_7) -- (Float_R1) -- (TL_8); 
		\draw (Float_R1) -- (TM_6); 
		\draw (TL_9) -- (TM_7); 
		\draw (TL_10) -- (Float_R2) -- (TM_8); 
		\draw (TM_7) -- (Float_R2); 
		\draw (TM_6)--(P_MR);
		\draw (TM_5A)--(Float_R0); 
		\draw (TM_5A)--(Float_R01); 
		\draw (P_Top) -- (P_ML) -- (P_BL) -- (P_BR) -- (P_MR) -- cycle;
		\draw (TM_1) -- (P_ML);
		\draw (BL_Top) -- (BL_Mid) -- (P_ML);
		\draw (BL_Mid) -- (P_BL); 
		
		\draw (P_MR) -- (BR_Mid);
		\draw (BR_Mid) -- (P_BR); 
		\draw (BR_Mid) -- (TM_8);
		\draw (BR_Mid) -- (BR_Top);
		
		\draw (187, \yBot) -- +(0, 24);
		\draw (371, \yBot) -- +(0, 22);

		\node[font=\footnotesize] at (227, 513) {$p$};
		\node[font=\footnotesize] at (319, 552) {$p$};
		\node[font=\footnotesize] at (340, 495) {$p$};
			\node[font=\footnotesize] at (323, 495) {$p$};
		\node[font=\footnotesize] at (435, 547) {$p$};
		\node[font=\footnotesize] at (266, 608) {$p$};
		\node[font=\footnotesize] at (365, 502) {$p$};
		\node[font=\footnotesize] at (415, 512) {$p$};
		\node[font=\footnotesize] at (177, 558) {$p$};
		\node[font=\footnotesize] at (200, 495) {$p$};

		\node[font=\footnotesize] at (219, 552) {$5$};
		\node[font=\footnotesize] at (267, 580) {$5$};
		\node[font=\footnotesize] at (372, 556) {$5$};
			\node[font=\footnotesize] at (180, 526) {$5$};
		\node[font=\footnotesize] at (208, 515) {$5$};
		\node[font=\footnotesize] at (372, 584) {$5$};
		\node[font=\footnotesize] at (346, 518) {$5$};
		\node[font=\footnotesize] at (390, 515) {$5$};
		\node[font=\footnotesize] at (310, 530) {$5$};
		\node[font=\footnotesize] at (280, 509) {$5$};
		
		\node[font=\footnotesize] at (274, 532) {$3$};
		\node[font=\footnotesize] at (200, 575) {$3$};
		\node[font=\footnotesize] at (198, 532) {$3$};
		\node[font=\footnotesize] at (362, 532) {$3$};
		\node[font=\footnotesize] at (424, 532) {$3$};
		\node[font=\footnotesize] at (332, 526) {$3$};
		\node[font=\footnotesize] at (340, 575) {$3$};
		
	\end{tikzpicture}
	
\captionof{figure}{Around a $p$-gon for type $[3, 5, 5, 6]$}
\label{355pall} 
\end{minipage}
\end{figure}
All the other cases are relatively straightforward to verify. 
\par
\textbf{Type $[3, 5, k_3, k_4]$, $ 10 \leq k_3 < k_4$, $k_3 \neq 11$.} \textbf{Step 1.} In Fig.~\ref{fig:357part1}, we show all possible sequence of faces preceding and succeeding (in the clockwise direction) a pentagon in $C_{k}$ (up to the symmetry of interchanging $k_3$ and $k_4$). The figure also demonstrate standard placement of triangles. The only exceptions occur when a $5(3)$-gon succeeds a $10(6)$-gon or $13(6)$-gon; in these instances, the triangles are placed as shown in the last figure of Fig.~\ref{fig:357part1}.

\begin{figure}[H]
\tikzstyle{ver}=[]
\tikzstyle{vert}=[circle, draw, fill=black!100, inner sep=0pt, minimum width=4pt]
\tikzstyle{vertex}=[circle, draw, fill=black!00, inner sep=0pt, minimum width=4pt]
\tikzstyle{edge} = [draw,thick,-]
\begin{minipage}{.6\textwidth} 
\centering
\begin{tikzpicture}[scale=0.18]

\draw[edge, thick](0,1)--(16,1);
\draw[edge, thick](2,1)--(3.5,3)--(5,1)--(5,-1.5)--(8,-2.5)--(11,-1.5)--(11,1)--(12.5,3)--(14,1);
\node[ver] () at (8,-0.5){\scriptsize ${5}$};
\node[ver] () at (3.5,-0.3){\scriptsize ${ k_{3}}$};
\node[ver] () at (12.5,-0.3){\scriptsize ${ k_{3}}$};
\node[ver] () at (8,2){\scriptsize ${ k_{4}}$};
\node[ver] () at (-2,1){\textbf{$\partial X_{k}$}};
------------------
\draw[edge, thick](18,1)--(31,1);
\draw[edge, thick](19,1)--(20.5,3)--(22,1)--(22,-2.5)--(27,-2.5)--(27,1)--(28.5,3)--(30,1);
\draw[edge, thick](24.5,1)--(23,3)--(26,3)--(24.5,1);
\node[ver] () at (20.5,-0.5){\scriptsize ${ k_{3}}$};
\node[ver] () at (28.5,-0.5){\scriptsize ${ k_{4}}$};
\node[ver] () at (22.5,2){\scriptsize ${ k_{4}}$};
\node[ver] () at (26.5,2){\scriptsize ${ k_{3}}$};
\node[ver] () at (24.5,-1){\scriptsize ${5}$};
------------------
\draw[edge, thick](33,1)--(43.5,1);
\draw[edge, thick](34,1)--(35.5,3)--(37,1)--(37,-2.5)--(42,-2.5)--(42,1)--(40.5,3)--(39,1)--(38.5,3);
\node[ver] () at (35.5,-0.5){\scriptsize ${ k_{3}}$};
\node[ver] () at (43.5,-0.5){\scriptsize ${ k_{3}}$};
\node[ver] () at (39.5,-1){\scriptsize ${5}$};
\node[ver] () at (37.7,2){\scriptsize ${ k_{4}}$};

------------------
 
\end{tikzpicture}
\end{minipage}%
\begin{minipage}{.6\textwidth} 
\begin{tikzpicture}[scale=0.18]

\draw[edge, thick](0,1)--(16,1);
\draw[edge, thick](1,1)--(2.5,3)--(4,1)--(8,-2.5)--(12,1)--(13.5,3)--(15,1);
\draw[edge, thick](5.5,3)--(6.5,1)--(8,3)--(9.5,1)--(10.5,3);
\node[ver] () at (8,-0.5){\scriptsize ${5}$};
\node[ver] () at (3,-0.5){\scriptsize ${ k_{4}}$};
\node[ver] () at (11.2,2){\scriptsize ${ k_{4}}$};
\node[ver] () at (12.8,-0.5){\scriptsize ${ k_{3}}$};
\node[ver] () at (4.6,2){\scriptsize ${ k_{3}}$};
------------------

\draw[edge, thick](17,1)--(34,1);
\draw[edge, thick](19,1)--(20.5,3)--(22,1)--(23,3);
\draw[edge, thick](19,1)--(24,-2.5)--(29,1)--(30.5,3)
--(32,1);
\draw[edge, thick](26,1)--(24.5,3)--(27.5,3)--(26,1);
\node[ver] () at (18.5,-0.5){\scriptsize ${10(6)}$};
\node[ver] () at (24,-0.5){\scriptsize ${5}$};
\node[ver] () at (28.2,2){\scriptsize ${10}$};
\node[ver] () at (24,2){\scriptsize ${ k_{4}}$};

 \node[ver] () at (30.5, -0.5){\scriptsize ${ k_{4}}$};
 
\end{tikzpicture}
\end{minipage}
\captionof{figure}{Neighborhoods of pentagons for type $[3, 5, k_3, k_4]$}
\label{fig:357part1} 

 \end{figure}

Due to the cyclic repetition of faces around each triangle, a face in $C_k$ must reappear after at most three other faces. Further, by inspecting the t-layer construction, we note that between any two successive pentagons in $C_k$, the sequence of $k_3$-gons or $k_4$-gons must be one of the following up to order reversal and interchanging $k_3$ and $k_4$:
 \[ a) \{5(3), k_3(6), 5(3)\}, \{5(3), k_3(5), 5(4)\}, \{5(4), k_3(4), 5(4)\}; \]
 \[b) \{5(3), k_4(5), k_3(3), 5(4)\}, \{5, k_4(4), k_3(3), 5\}; \]
 \[c) \{5(3), k_4(4), k_3(5), k_4(3), 5(3)\}, \{5(4), k_4(3), k_3(6), k_4(3), 5(4)\} \] 
 
The other possible sequences, e.g., $\{5(4), k_3(6), 5(3)\}$ and $ \{5(3), k_4(4), k_3(4), k_4(4), 5(3)\}$ do not arise because of the manner in which we placed the triangle on the pentagons; nevertheless, our argument below covers such cases should they arise.

\textbf{Step 2.} We now attach triangles to the other faces. We have the following possible neighborhoods of a $10$-gon shown in Fig.~\ref{fig:3512k4}, and one of the possible types of neighborhoods of a $12$-gon is shown in Fig.~$\ref{fig:35aroundk31}$ which are consistent with Observations \ref{A1}--\ref{A2}.

 \begin{figure}[H] 
\centering
\tikzstyle{ver}=[]
\tikzstyle{vert}=[circle, draw, fill=black!100, inner sep=0pt, minimum width=4pt]
\tikzstyle{vertex}=[circle, draw, fill=black!00, inner sep=0pt, minimum width=4pt]
\tikzstyle{edge} = [draw,thick,-]
 \begin{minipage}{.20\textwidth} 
\centering
\begin{tikzpicture}[scale=0.15]

 \draw[edge, thick](-1.6,5)--(1.6,5)--(4.25,3.07)--(5.25,0.0)--(4.25,-3.1)--(1.6,-5.0)--(-1.6,-5.0)--(-4.25,-3.1)--(-5.25,-0.0)--(-4.25,3.07)--(-1.6,5);
 
 \draw[edge, thick](-4.0,5.5)--(-1.6, 5.05)--(-1.8, 6.4);
 \draw[edge, thick](-6,3.2)--(-4.3, 3.08)--(-4.0, 5.5);
 
 \draw[edge, thick](-7,0.8)--(-5.25,-0.0)--(-6.5,-2);
 
 \draw[edge, thick](-6.5,-2)--(-4.25,-3.1)--(-5,-4.5);
 
 \draw[edge, thick] (-0.5,-6.6)--(-1.6,-5.0)--(-3.4,-5.8)--(-0.5,-6.6);
 
 \draw[edge, thick] (2,-6.6)--(1.6,-5.0)--(4,-5.2)--(4.25,-3.07)--(4.25,-3.07)--(5.5,-3.57); 
 
 \draw[edge, thick] (6.5,-0.5)--(5.25,0.0)--(6, 2)--(4.25,3.1)--(5.5,4);
 
 \draw[edge, thick](3.7, 5.5)--(1.6, 5.05)--(0.5, 6.5)--(3.7,5.5);
 
 \node[ver] () at (0,0){\scriptsize ${10}$};

 \node[ver] () at (-5.5,1.5){\scriptsize ${5}$};

 \node[ver] () at (-3.5, -4.5){\scriptsize ${k_{_4}}$};
 \node[ver] () at (0.5, -5.7 ){\scriptsize ${5}$};

 \node[ver] () at (6, -1.5){\scriptsize ${k_{_4}}$};

 \node[ver] () at (3.9, 4.6){\scriptsize $5$};
 \node[ver] () at (-0.1, 6.2){\scriptsize ${k_{_4}}$};
 
 \end{tikzpicture}
 \end{minipage}%
 \begin{minipage}{.20\textwidth} 
 \centering
 \begin{tikzpicture}[scale=0.15]
 
 \draw[edge, thick](-1.6,5)--(1.6,5)--(4.25,3.07)--(5.25,0.0)--(4.25,-3.1)--(1.6,-5.0)--(-1.6,-5.0)--(-4.25,-3.1)--(-5.25,-0.0)--(-4.25,3.07)--(-1.6,5);
 
 \draw[edge, thick](-4.0,5.5)--(-1.6, 5.05)--(-1.8, 6.4);
 \draw[edge, thick](-6,3.2)--(-4.3, 3.08)--(-4.0, 5.5);
 
 \draw[edge, thick](-7,0.8)--(-5.25,-0.0)--(-6.5,-2);
 
 \draw[edge, thick](-6.5,-2)--(-4.25,-3.1)--(-5,-4.5);
 
 \draw[edge, thick] (-0.5,-6.6)--(-1.6,-5.0)--(-3.4,-5.8)--(-0.5,-6.6);
 
 \draw[edge, thick] (2,-6.6)--(1.6,-5.0)--(4,-5.2)--(4.25,-3.07)--(4.25,-3.07)--(5.5,-3.57); 
 
 \draw[edge, thick] (6.5,-1.5)--(5.25,0.0)--(6.5, 1.5)--(6.5,-1.5);
 
 \draw[edge, thick](2.5, 6.2)--(1.6, 5.05)--(4, 5.2)--(4.25, 3.07)--(5.5, 3.5);
 
 \node[ver] () at (0,0){\scriptsize ${10}$};

 \node[ver] () at (-5.5,1.5){\scriptsize ${5}$};

 \node[ver] () at (-3.5, -4.5){\scriptsize ${k_{_4}}$};
 \node[ver] () at (0.5, -5.8 ){\scriptsize ${5}$};

 \node[ver] () at (6, -2){\scriptsize ${k_{_4}}$};

 \node[ver] () at (5.4, 2){\scriptsize $5$};
 \node[ver] () at (-0.2, 6.2){\scriptsize ${k_{_4}}$};
 
 \end{tikzpicture}

\end{minipage}%
 \begin{minipage}{.20\textwidth} 
 \centering
 \begin{tikzpicture}[scale=0.15]
 
 \draw[edge, thick](-1.6,5)--(1.6,5)--(4.25,3.07)--(5.25,0.0)--(4.25,-3.1)--(1.6,-5.0)--(-1.6,-5.0)--(-4.25,-3.1)--(-5.25,-0.0)--(-4.25,3.07)--(-1.6,5);
 
 \draw[edge, thick](-4.0,5.5)--(-1.6, 5.05)--(-1.8, 6.4);
 \draw[edge, thick](-6,3.2)--(-4.3, 3.08)--(-4.0, 5.5);
 
 \draw[edge, thick](-7,0.8)--(-5.25,-0.0)--(-6.5,-2);
 
 \draw[edge, thick](-6.5,-2)--(-4.25,-3.1)--(-5,-4.5);
 
 \draw[edge, thick] (-0.8,-6.8)--(-1.6,-5.0)--(-3.4,-5.8)--(-0.8,-6.8);
 
 \draw[edge, thick] (1,-6.8)--(1.6,-5.0)--(3.6,-5.5)-- (1,-6.8);

 \draw[edge, thick] (6.5, 0.5)--(5.25,0)--(6.25, -1.8)--(4.25,-3.1)--(5.25,-4);
 
 \draw[edge, thick](2.5, 6.2)--(1.6, 5.05)--(4, 5.2)--(4.25, 3.07)--(5.5, 3.5);
 
 \node[ver] () at (0,0){\scriptsize ${10}$};

 \node[ver] () at (-5.9,1.6){\scriptsize ${k_{_4}}$};

 \node[ver] () at (-3.6, -4.6){\scriptsize $5$};
 \node[ver] () at (0.2, -5.9 ){\scriptsize ${k_{_4}}$};

 \node[ver] () at (3.8, -4.5){\scriptsize $5$};

 \node[ver] () at (6, 2){\scriptsize ${k_{_4}}$};
 \node[ver] () at (0, 6){\scriptsize $5$};
 
 \end{tikzpicture}

\end{minipage}

\captionof{figure}{Neighborhoods of $10$-gons for type $[3, 5, 10, k_4]$}
\label{fig:3512k4} 
\end{figure}

\begin{figure}[ht!]
\tikzstyle{ver}=[]
\tikzstyle{vert}=[circle, draw, fill=black!100, inner sep=0pt, minimum width=4pt]
\tikzstyle{vertex}=[circle, draw, fill=black!00, inner sep=0pt, minimum width=4pt]
\tikzstyle{edge} = [draw,thick,-]
\centering
\begin{tikzpicture}[scale=0.15]

\draw[edge, thick](6,-0.2)--(3.1,-1)--(1,-3.1)--(0,-6);
\draw[edge, thick](0,-6)--(1,-8.9)--(3.1,-11)--(6,-11.8);
\draw[edge, thick](6,-0.2)--(8.9,-1)--(11,-3.1)--(12,-6);
\draw[edge, thick](6,-11.8)--(8.9,-11)--(11,-8.9)--(12,-6);
\draw[edge, thick](6.1,1)--(6,-0.2)--(4.2,0.7)--(3.1,-1)--(2.1,0);
\draw[edge, thick](1,-3.1)--(-0.9,-3.3)--(0,-1.4)--(1,-3.1);
\draw[edge, thick](-1.5,-5.7)--(0,-6)--(-1,-8)--(1,-8.9)--(-0.2,-10.2);
\draw[edge, thick](3.1,-11)--(1.1,-11.4)--(3.1,-12.8)--(3.1,-11);
\draw[edge, thick](6,-11.8)--(4.8,-13.4)--(7.4,-13.2)--(6,-11.8);
\draw[edge, thick](9.1,-12.4)--(8.9,-11)--(11.1,-11.1)--(11,-8.9)--(12.5,-9.5);
\draw[edge, thick](12,-6)--(13.4,-7.3)--(13.4,-4.7)--(12,-6);
\draw[edge, thick](9.8,0.3)--(8.9,-1)--(11.2,-0.8)--(11,-3.1)--(12.5, -2.5);

 \node[ver] () at (6,-6){\scriptsize ${12}$};
 \node[ver] () at (-0.7, -4.6){\scriptsize ${k_{_4}}$};

 \node[ver] () at (1.4, -1.2){\scriptsize $5$};

 \node[ver] () at (8, 0.3){\scriptsize ${k_{_4}}$};

 \node[ver] () at (12.3, -4.3){\scriptsize $5$};
 \node[ver] () at (12.7, -8){\scriptsize ${k_{_4}}$};
 \node[ver] () at (8, -12.3){\scriptsize $5$};
 \node[ver] () at (4.2, -12.2){\scriptsize ${k_{_4}}$};
 \node[ver] () at (1, -10.2){\scriptsize $5$};
 
\end{tikzpicture}

\captionof{figure}{A neighborhood of $12$-gons for type $[3, 5, 12, k_4]$}
\label{fig:35aroundk31} 
 \end{figure}

Each of these neighborhoods can also be viewed as a partial neighborhood of a $k_3$-gon (or the $k_4$-gon) for $k_3 \geq 13$ and extended to full neighborhoods consistent with Observations \ref{A1}--\ref{A2} by replacing $\nabla$- and $\Delta$-triangles in the manners shown in Fig.~\ref{fig:3510extend}.

 \begin{figure}[H] 
\centering
\begin{tikzpicture}[ x=0.30mm, y=0.30mm,
				line cap=round, 
				line join=round,
				thick
				]
				
				\begin{scope}[shift={(0,0)}]
					\draw (44, 0) -- (91, 0);
					
					\draw (55, 0) -- (55, -15);
					\draw (77, 0) -- (77, -16);
					
					\draw (66, 0) -- (59, 19) -- (76, 19) -- cycle;
					
					\node[font=\footnotesize] at (75, 6) {5};
					\node[font=\footnotesize] at (57, 6) {$k_3$};
					\node[below, font=\footnotesize] at (67, -2) {$k_4$};
				\end{scope}
				
				\draw[->, >=stealth] (97, 0) -- (114, 0);

				\begin{scope}[shift={(0,0)}]
					\draw (123, 0) -- (170, 0);
					
					\draw (127, 0) -- (127, -17);
					\draw (166, 0) -- (166, -16);
					
					\draw (153, 0) -- (144, 19) -- (134, 0) -- (134, 19);
					\draw (153, 0) -- (153, 19);
					
					\node[font=\footnotesize] at (158, 6) {5};
					\node[font=\footnotesize] at (128, 6) {$k_3$};
					\node[below, font=\footnotesize] at (143, -2) {$k_{4}\!+\!1$};
				\end{scope}
				
				\draw[->, >=stealth] (178, 0) -- (196, 0);

				\begin{scope}[shift={(0,0)}]
					\draw (202, 0) -- (281, 0);
					
					\draw (208, 0) -- (208, -16);
					\draw (277, 0) -- (277, -17);
					
					\draw (216, 0) -- (209, 18) -- (226, 18) -- cycle; 
					\draw (251, 0) -- (242, 18) -- (232, 0) -- (232, 18); 
					\draw (251, 0) -- (251, 18);
					
					\draw (267, 0) -- (259, 18) -- (274, 18) -- cycle; 
					
					\node[font=\footnotesize] at (224, 6) {5};
					\node[font=\footnotesize] at (274, 6) {5};
					
					\node[font=\footnotesize] at (207, 6) {$k_3$};
					\node[below, font=\footnotesize] at (241, -2) {$k_{4}\!+\!3$};
					\node[font=\footnotesize] at (258, 6) {$k_3$};
				\end{scope}
				
				\begin{scope}[shift={(0,0)}]
					\draw (326, 0) -- (377, 0);
					
					\draw (330, 0) -- (330, -15);
					\draw (368, 0) -- (368, -16);
					
					\draw (362, 0) -- (353, 19) -- (342, 0) -- (342, 19);
					\draw (362, 0) -- (362, 19);
					
					\node[font=\footnotesize] at (366, 6) {5};
					\node[font=\footnotesize] at (333, 6) {$k_3$};
					\node[below, font=\footnotesize] at (350, -2) {$k_4$};
				\end{scope}
				
				\draw[->, >=stealth] (382, 0) -- (402, 0);

				\begin{scope}[shift={(0,0)}]
					\draw (411, 0) -- (509, 0);
					
					\draw (418, 0) -- (418, -16);
					\draw (506, 0) -- (506, -16);

					\draw (446, 0) -- (436, 19) -- (425, 0) -- (425, 19); 
					\draw (446, 0) -- (446, 19);
					
					\draw (464, 0) -- (455, 20) -- (471, 20) -- cycle; 
					
					\draw (500, 0) -- (490, 21) -- (480, 0) -- (480, 21); 
					\draw (500, 0) -- (500, 21);
					
					\node[font=\footnotesize] at (450, 6) {5};
					\node[font=\footnotesize] at (505, 6) {5};
					
					\node[font=\footnotesize] at (418, 6) {$k_3$};
					\node[below, font=\footnotesize] at (448, -2) {$k_{4}\!+\!3$};
					\node[font=\footnotesize] at (472, 6) {$k_3$};
				\end{scope}
				
			\end{tikzpicture}

\captionof{figure}{Extension of partial neighborhoods} 
\label{fig:3510extend} 

\end{figure}
We claim that the existing partial neighborhood of each $k_3$-gon or $k_4$-gon, as well as the neighborhood forced upon it during the process of covering adjacent faces, can be extended to one of the neighborhoods shown in Fig.~\ref{fig:35aroundk3} for $k_3=10$, or in Fig~\ref{fig:3512k4} for $k_3=12$, and to the extensions of these neighborhood as shown in Fig~\ref{fig:3510extend} for $k_3>12$. We verify the claim only for two non-trivial cases, the remaining cases are straightforward.
 \par
For the sequence $\{5(3), k_3(6), 5(3)\}$, we have the following partial neighborhood of $k_3$-gon for $k_3 \neq 10, 13$ shown in Fig.~\ref{fig:35aroundk3} (a), and for $k_3=10, 13$ shown in Fig.~\ref{fig:35aroundk3} (b) (without the faces with dotted edges). For $k_3=10$, the partial neighborhood can be extended to the neighborhood shown in the second figure in Fig.~\ref{fig:3512k4} (also in Fig.~\ref{fig:35aroundk3} (b)). For $k_3=12$, it can be extended to the neighborhood shown in Fig.~\ref{fig:35aroundk31}. For larger values of $k_3$, we can use the extension procedure shown in Fig.~\ref{fig:3510extend}.

\begin{figure}[H]
\tikzstyle{ver}=[]
\tikzstyle{vert}=[circle, draw, fill=black!100, inner sep=0pt, minimum width=4pt]
\tikzstyle{vertex}=[circle, draw, fill=black!00, inner sep=0pt, minimum width=4pt]
\tikzstyle{edge} = [draw,thick,-]
\centering
\subfigure[For $k_3 > 10$]{\begin{tikzpicture}[scale=0.20]
\draw[edge, thick](10,20)--(24,20);
\draw[edge, thick](10,14)--(26,14);
\draw[edge, thick](12,20)--(11,17.5)--(12,14);
\draw[edge, thick](9.5, 18.5)--(11,17.5)--(9.8,16)--(12,14);
\draw[edge, thick](25,22)--(24,20)--(22.5,22)--(21,20)--(20.5,22);
\draw[edge, thick](24,14)--(25,17.5)--(24,20);
\draw[edge, thick](24,14)--(26.5,16.5)--(25,17.5)--(26.5,18.5);
\draw[edge, thick](12,20)--(13.5,22)--(15,20)--(16,22);
\node[ver] () at (11,19){\scriptsize ${5}$};
\node[ver] () at (25.5,19){\scriptsize ${5}$};
\node[ver] () at (17,17){\scriptsize ${k_3}$};
\node[ver] () at (20,21){\scriptsize ${k_4}$};
\node[ver] () at (16.5,21){\scriptsize ${k_4}$};
\node[ver] () at (8.5,20){\scriptsize ${\partial X_{k}}$};
\node[ver] () at (8.5,13.7){\scriptsize ${\partial X_{k-1}}$};
\end{tikzpicture}}
\subfigure[For $k_3=10$]{ \begin{tikzpicture}[scale=0.20]
\draw[edge, thick](10,20)--(27,20);
\draw[edge, thick](10,14)--(26,14);
\draw[edge, thick](12,20)--(11,17.5)--(12,14);
\draw[edge, thick](9.5, 18.5)--(11,17.5)--(9.8,16)--(12,14);
\draw[dotted, thick](22.5,22)--(22.5,20.1)--(21.5,22)--(20.5,20)--(20.5,22);
\draw[dotted, thick](16,22)--(17,20)--(18,22)--(16,22);
\draw[edge, thick](26.5,20)--(25,22)--(24,20);
\draw[edge, thick](24,14)--(25,17.5)--(24,20);
\draw[edge, thick](24,14)--(26.5,16.5)--(25,17.5)--(26.5,18.5);
\draw[edge, thick](12,20)--(13,22)--(14,20)--(14.5,22);
\node[ver] () at (11,19){\scriptsize ${5}$};
\node[ver] () at (25.5,19){\scriptsize ${5}$};
\node[ver] () at (19,21){\scriptsize ${5}$};
\node[ver] () at (17,17){\scriptsize ${k_3}$};
\node[ver] () at (23.5,21){\scriptsize ${k_4}$};
\node[ver] () at (15.5,21){\scriptsize ${k_4}$};
\node[ver] () at (8.5,20){\scriptsize ${\partial X_{k}}$};
\node[ver] () at (8.5,13.7){\scriptsize ${\partial X_{k-1}}$};
\end{tikzpicture}}

\captionof{figure}{Neighborhood of $k_3$-gon for $[3, 5, k_3, k_4]$}
\label{fig:35aroundk3}

\end{figure}

For the sequence $\{5(4), k_3(4), 5(4)\}$, we already have $\Delta$-triangles attached to two consecutive vertices of the $k_3(4)$-gon. Here, we note that the first and the last $5(4)$-gons are preceded and succeeded by a $k_4$-gon, respectively. Therefore, as prescribed above in Fig.~\ref{fig:357part1}(a), the triangles are attached to these $5(4)$-gons so that they are $\nabla$-triangles in the neighborhood of the $k_3(4)$-gon. Thus, we can extend the partial neighborhood of $k_3(4)$-gon to a neighborhood as shown in Fig.~\ref{fig:35aroundk3} for $k_3=10$. For $k_3> 11$, we use the same procedure of extension as before. 
\begin{figure}[H]
\centering
\begin{minipage}{.20\textwidth} 
\includegraphics[scale=0.15]{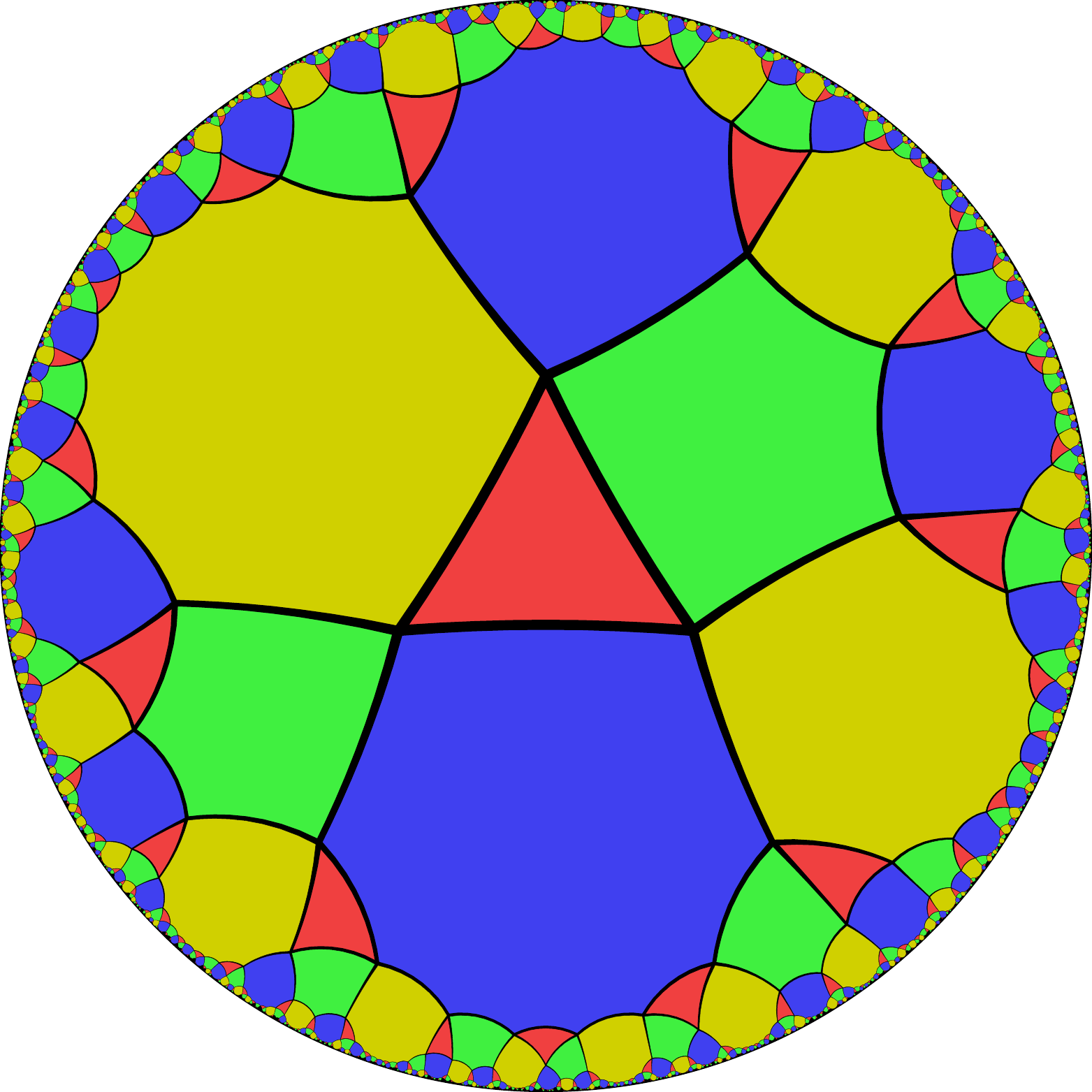}
\end{minipage}
\captionof{figure}{A tiling of type $[3, 5, 10, 12]$}
\label{351012}
\end{figure}
One can now verify that all the tuples have been covered for \enquote{the existence} part of the theorem. This completes the proof.
\end{proof}
For tuples of degree $3$, pseudo homogeneous and homogeneous tiling are the same. We state the following known result for future reference.
\begin{proposition}\label{dg3} \cite{DG18}
A cyclic tuple $k = [k_1, k_2, k_3]$ is the type of a pseudo-homogeneous tiling on $\mathbb{H}^2$ if and only if one of the following holds: \newline
 $\bullet$ $k=[p^3] $ where $p \geq 7$, or \newline
$\bullet$ $ k=[2n, 2n, q]$ where $2n \neq q$, and $\frac{1}{n} +\frac{1}{q}< \frac{1}{2}.$ \newline
$\bullet$ $ k=[2l, 2m, 2n]$ where $l, m, n$ are distinct $\frac{1}{l} +\frac{1}{m} +\frac{1}{n}< \frac{1}{2}.$

 \end{proposition}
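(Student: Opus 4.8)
The plan is to reduce the problem to a statement about homogeneous maps and then treat necessity and sufficiency separately. Since every vertex of a degree-$3$ map meets exactly three faces, its vertex-type is a cyclic-tuple on three symbols; but all cyclic orderings of three symbols coincide once rotations and mirror images are identified. Hence the cyclic-tuple of a vertex is already determined by the underlying unordered tuple, so a degree-$3$ map is pseudo-homogeneous exactly when it is homogeneous. It therefore suffices to decide, for each unordered tuple $\mathfrak{k}=[k_1,k_2,k_3]$ satisfying the hyperbolicity condition $\vartheta(\mathfrak{k})>2$, whether a homogeneous map of that type exists on $\mathbb{H}^2$.

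For necessity I would run an edge-labelling argument around a single face. Fix a face $F$ of size $k_i$ and traverse its boundary edges $e_1,\dots,e_{k_i}$ cyclically, labelling each $e_j$ by the size of the face glued to $F$ along it. At the vertex shared by $e_j$ and $e_{j+1}$ the three incident faces form the multiset $\{k_1,k_2,k_3\}$, one member being $F$ itself, so the two labels at that vertex are precisely the remaining two members. If $k_1,k_2,k_3$ are pairwise distinct, the two labels at every vertex of $F$ are the two sizes different from $k_i$; consecutive labels therefore differ, and since only two labels occur they must alternate, forcing $k_i$ even. Applying this to all three faces gives the form $[2l,2m,2n]$ with $l,m,n$ distinct. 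If the multiset is $\{a,a,c\}$ with $a\neq c$, the same traversal around an $a$-face produces labels in $\{a,c\}$ that alternate, so $a$ is even, while around a $c$-face every neighbour is an $a$-face and no constraint on $c$ appears; this gives $[2n,2n,q]$. The all-equal case $[p^3]$ imposes no parity constraint.

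For sufficiency I would realise each admissible tuple by a classical tiling whose existence is governed exactly by $\vartheta(\mathfrak{k})>2$. The tuple $[p^3]$ is the regular tiling $\{p,3\}$, which exists on $\mathbb{H}^2$ precisely when $p\geq 7$. The tuple $[2n,2n,q]$ is obtained by truncating the regular tiling $\{n,q\}$: truncation turns each $n$-gon into a $2n$-gon and each degree-$q$ vertex into a $q$-gon, yielding vertex-type $[2n,2n,q]$; the tiling $\{n,q\}$ exists on $\mathbb{H}^2$ exactly when $\frac{1}{n}+\frac{1}{q}<\frac{1}{2}$, which also forces $n\geq 3$ and covers odd $q$ uniformly. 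The all-even tuple $[2l,2m,2n]$ with $l,m,n$ distinct is the omnitruncated tiling of the $(l,m,n)$ triangle group, or equivalently one may invoke Theorem 1.3 of \cite{EEK2} to produce a homogeneous map of type $[2l,2m,2n]$ on a closed orientable surface and then pass to its universal cover $\mathbb{H}^2$; this exists exactly when the $(l,m,n)$ triangle group is hyperbolic, that is, when $\vartheta(\mathfrak{k})>2$.

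Matching the two directions finishes the proof: the parity patterns $[p^3]$, $[2n,2n,q]$, $[2l,2m,2n]$ are forced by necessity, and the explicit tilings realise each of them precisely when $\vartheta(\mathfrak{k})>2$. I do not expect a single hard step; rather the main work is the bookkeeping of the necessity argument across the three equality patterns of $(k_1,k_2,k_3)$ — especially checking that a repeated value must be even while the distinguished size $q$ in $[2n,2n,q]$ stays unconstrained — together with verifying that the truncation construction covers every admissible $[2n,2n,q]$, including those with odd $q$.
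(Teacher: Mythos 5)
Your argument is correct, and it is worth noting that the paper does not prove this proposition at all: it is stated for completeness and attributed to \cite{DG18}, so there is no in-paper proof to compare against. Your reduction (degree $3$ forces pseudo-homogeneous $=$ homogeneous, since a multiset of three face sizes has a unique cyclic order up to rotation and reflection) matches the paper's passing remark in \S 2 that for $3$-tuples the two notions coincide. Your necessity argument via edge-labelling around a face is sound: at a trivalent vertex of a face $F$ the two edges of $F$ are labelled by the multiset complement of $|F|$ in $\{k_1,k_2,k_3\}$, so the labels alternate whenever that complement has two distinct values, forcing $|F|$ even; this correctly yields the three parity patterns and correctly leaves $q$ unconstrained in $[2n,2n,q]$. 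The realizations by $\{p,3\}$, by truncation of $\{n,q\}$, and by the omnitruncated $(l,m,n)$ triangle tiling are all standard and valid. One point of friction: for the all-even case your existence condition is $\tfrac{1}{l}+\tfrac{1}{m}+\tfrac{1}{n}<1$ (equivalently $\vartheta([2l,2m,2n])>2$, i.e.\ $\tfrac{1}{2l}+\tfrac{1}{2m}+\tfrac{1}{2n}<\tfrac12$), whereas the proposition as printed demands $\tfrac{1}{l}+\tfrac{1}{m}+\tfrac{1}{n}<\tfrac12$. Your version is the right one --- for instance $[6,8,10]$ is realized by the omnitruncated $(3,4,5)$ tiling yet has $\tfrac13+\tfrac14+\tfrac15>\tfrac12$ --- so the printed inequality in the third bullet is evidently a normalization slip (the second bullet's $\tfrac1n+\tfrac1q<\tfrac12$ is the genuine Schl\"afli condition, and the third bullet should read $<1$ in the $l,m,n$ variables). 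You should flag that discrepancy explicitly rather than silently proving the corrected statement.
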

 
\begin{remark} [Uniqueness of tilings] \label{unique} 
By a careful study of the classification of tuples that admit tiling given above, we see that more than one non-equivalent ways of constructing neighborhoods around at least one of the faces for all the tuples except the following: 
 \newline
 a) $[k^r]$ with $r \geq 3$ b) $[3, 3, 3p, 3q]$ c) $[3, 4, 4p, 4q]$, d) all the tuples of degree $3$ that admits a tiling. 
 \newline
 This flexibility in constructing neighborhoods ensures the existence of uncountably many tilings for all the remaining types.

\end{remark}

 \subsection{Tilings of $\mathbb{H}^2$ by regular polygons} \label{regtile}
 Let us recall the hyperbolic cosine rule for a hyperbolic regular $n$-gon, $n<\infty$, with side length $a$ and inner angle $\theta$ 
 \begin{equation} \label{eq:sidelength}
 \cosh (a/2)= \frac{\cos (\pi/n)}{ \sin (\theta/2)} 
 \end{equation}
 Consequently, given a tuple $\mathfrak{k}=[k_1, k_2,$ $ \cdots, k_d]$ with $k_i< \infty, \vartheta(\mathfrak{k})>2$, there is a unique length, denoted by $\ell_{\mathfrak{k}}$, such that there exist hyperbolic regular $k_i$-gons with geodesic sides for all $i=1, \cdots, d$, which fits around a vertex, or equivalently, their inner angles $\theta_i$ sum to $2 \pi$, that is, $ \sum^d_{i=1} \theta_i=2 \pi$. Thus, a pseudo-homogeneous tiling produces a tiling of $\mathbb{H}^2$ by regular polygons of finite sizes. 
 \\
We are now ready to establish the final component of the classification process as stated in Theorem \ref{solv1}.
\begin{proof} 

Let $V_{\text{null}}$ denote the set of all such tuples satisfying the angle-sum condition that do not admit a pseudo-homogeneous tiling of $\mathbb{H}^2$. Based on the exhaustive classification established in the preceding sections, we observe that $V_{\text{null}}$ consists exclusively of tuples of degree $3$ and $4$. 
To complete the proof of the theorem, it is sufficient to demonstrate that no global tiling can exist if it contains two adjacent vertices of distinct types $\mathfrak{k}_1, \mathfrak{k}_2 \in V_{\text{null}}$. Recall that by the cosine rule \ref{eq:sidelength}, there exists a unique length $\ell_\mathfrak{k}$ of the sides of the regular polygons of finite sizes forming a fan of type $\mathfrak{k}$. We establish this impossibility by showing a combinatorial contradiction or a mismatch of associated lengths $\ell_{\mathfrak{k}_1} \neq \ell_{\mathfrak{k}_2}$.
 \\ 
 Let us denote by $\theta_{\ell}(r)$, the inner-angles of a hyperbolic regular polygon with side length $\ell$. The following can be verified using the cosine formula: 

 \begin{equation} \label{estimate}
 \cosh(\ell_{[3, 5, k_3, k_4]}/2) \in (1.1, 1.2)
\end{equation}
Since two adjacent vertices of distinct types must have two common polygons, the possibility that $\mathfrak{k}_1$ and $\mathfrak{k}_2$ are both of degree $3$ is ruled out. 
\par
 \textbf{Case A}. $\deg(\mathfrak{k}_1) =3$ and $\deg(\mathfrak{k}_2)=4$; we have the following subcases:.
\par
\[ \textbf{i)} [3, j, k], \mathfrak{k}_2=[3, i, j, l]; \textbf{ii)} \mathfrak{k}_1 =[i, j, k], \mathfrak{k}_2=[3, 5, j, k]; \textbf{iii)} \mathfrak{k}_1 =[i, j, k], \mathfrak{k}_2=[3, 4, j, k].\]
 \textbf{A. i)} Using the formula \ref{eq:sidelength}, one can verify that $\cosh(\ell_{\mathfrak{k}_1 } /2)<\cosh(\ell_{\mathfrak{k}_2}/2)$ for $i=4, 5$ and for all $j, l, k$. Hence, $\ell_{\mathfrak{k}_1} \neq \ell_{\mathfrak{k}_2}$. 
 \newline
 \textbf{A. ii)} We must have $\theta_{\ell}(3) + \theta_{\ell}(5)= \theta_{\ell}(i)$ for $\ell=\ell_{\mathfrak{k}_1}=\ell_{\mathfrak{k}_2}$. Using the side length estimate \ref{estimate}, we can verify that $\theta_{\ell}(3)>50^{\circ}$ and $\theta_{\ell}(5)>84^{\circ}$, but $\theta_{\ell}(i)<134^{\circ}$. Hence, $\ell_{\mathfrak{k}_1} \neq \ell_{\mathfrak{k}_2}$. 
\newline
 \textbf{A. iii)} Starting with two vertices, say of $v_1$ and $v_2$ of types $\mathfrak{k}_1$ and $\mathfrak{k}_2$, respectively, the only possible valid configuration around $i$-gon is shown in Fig.~\ref{fig:34deg3}. Consequently, $i$ must be even. Similarly, for a valid neighborhoods of the $j$ and $k$-gons, $j$ and $k$ must be even. This implies $\mathfrak{k}_1 \notin V_{not}$. A similar argument works for $i =j$.
 \begin{figure}[ht!]
\tikzstyle{ver}=[]
\tikzstyle{vert}=[circle, draw, fill=black!100, inner sep=0pt, minimum width=4pt]
\tikzstyle{vertex}=[circle, draw, fill=black!00, inner sep=0pt, minimum width=4pt]
\tikzstyle{edge} = [draw,thick,-]
\centering
\begin{tikzpicture}[scale=0.15]

\draw[edge, thick](16,12)--(11.5,9)--(11.5,4.8);
\draw[edge, thick](16,12)--(22,10.3)--(22,4.6);
\draw[edge, thick](11.5,4.8)--(14.4,1.7);
\draw[edge, thick](22,4.6)--(17.8,1.7)--(14.4,1.7);
\draw[edge, thick](16,12)--(22,14.5)--(22,10.3);
\draw[edge, thick](22,14.5)--(27.6,16.4)--(27.6,12.2)--(22,10.3);
\draw[edge, thick](27.6,16.4)--(32.5,15)--(27.6,12.2);
\draw[edge, thick](22,4.6)--(24.8,2)--(28.2,2.5)--(30.8,3.8)--(31.5,6.2)--(31.3,9.4)--(27.6,12.2);
\draw[edge, thick](17.8,1.7)--(19,-2.5)--(22.5,-2.5)--(24.7,-1)--(24.8,2);
\draw[edge, thick](16,12)--(11.8,14.5)--(7.4,12)--(11.5,9);
\draw[edge, thick](11.8,14.5)--(9.3,16.6)--(14,16.6)--(11.8,14.5);
\draw[edge, thick](7.4,12)--(5,14)--(5,10)--(7.4,12);
\draw[edge, thick](11.5,9)--(7.4,7)--(11.5,4.8);
\draw[edge, thick](7.4,7)--(5,8.6);
\draw[edge, thick](7.4,7)--(5,4.5);
\draw[edge, thick](11.5,4.8)--(8,2)--(5,4.5);
\node[ver] () at (11.8,15.6){\scriptsize ${3}$};
\node[ver] () at (6,12){\scriptsize ${3}$};
\node[ver] () at (10,7){\scriptsize ${3}$};
\node[ver] () at (19.7,12.1){\scriptsize ${3}$};
\node[ver] () at (29.3,14.5){\scriptsize ${3}$};
\node[ver] () at (11.6,11.5){\scriptsize ${4}$};
\node[ver] () at (8,4.5){\scriptsize ${4}$};
\node[ver] () at (24.5,13.2){\scriptsize ${4}$};
\node[ver] () at (8.7,14.3){\scriptsize ${i}$};
\node[ver] () at (5.6,6.8){\scriptsize ${i}$};
\node[ver] () at (16.5,6.6){\scriptsize ${i}$};
\node[ver] () at (30.2,12.2){\scriptsize ${i}$};
\node[ver] () at (14.8,14.2){\scriptsize ${j}$};
\node[ver] () at (25.5,7){\scriptsize ${j}$};
\node[ver] () at (11,2.6){\scriptsize ${j}$};
\node[ver] () at (7.8,9.3){\scriptsize ${j}$};
\node[ver] () at (20.5,0.3){\scriptsize ${k}$};
\node[ver] () at (22,2.8){\scriptsize ${ v_{1}}$};
\node[ver] () at (20.8,9.5){\scriptsize ${ v_{2}}$};

\end{tikzpicture}
\captionof{figure}{Configuration for mixed type $\{[i, j, k], [3, 4, i, j]\}$}
\label{fig:34deg3} 
\end{figure}
\par
 \textbf{Case B}. $\deg(\mathfrak{k}_1) =4$ and $\deg(\mathfrak{k}_2)=4$. We have the following subcases:
 \[
 \textbf{i)} \ \mathfrak{k}_1= [3, 4, i, j]; \textbf{ii)}\mathfrak{k}_1=[3, 3, j, l]; \textbf{iii)}\mathfrak{k}_1=[3, 4, i, j], \mathfrak{k}_2=[3, 5, i, k] \]
 \textbf{B. i)} We first note that fans around two adjacent vertices of types $\mathfrak{k}_1 =[3, 3, i, j]$ and $\mathfrak{k}_2=[3, 3, i', j']$ must share three polygons, consequently, the fourth as well. Hence our claim follows.
\\
\textbf{B. ii)} Starting with two such adjacent vertex types, the only possible configuration around the $l$-gon is shown in Fig.~\ref{fig:3334}. Here we use the easy to verify fact that a $4$-gon in such a tiling must be of type-2. Consequently, both $j$ and $l$ must be multiples of $3$. But this implies, by Theorem \ref{431prop}, Assertion 1, that $[3, 3, j, l] \notin V_{not}$.

\begin{figure}[H]
\tikzstyle{ver}=[]
\tikzstyle{vert}=[circle, draw, fill=black!100, inner sep=0pt, minimum width=4pt]
\tikzstyle{vertex}=[circle, draw, fill=black!00, inner sep=0pt, minimum width=4pt]
\tikzstyle{edge} = [draw,thick,-]
\centering
\begin{tikzpicture}[scale=0.20]

\draw[edge, thick](11,16.5)--(8.5,18)--(8.5,20)--(10.5,21.4)--(13,21.5)--(15,21)--(18,21)--(20,21.5)--(22,21.5)--(23,20.5)--(23,18.5)--(22,17);
\draw[edge, thick](11,16.5)--(15,17)--(18,17)--(22,17);
\draw[edge, thick](18,17)--(16.5,14)--(15,17);
\draw[edge, thick](11,16.5)--(9.5,15.5)--(9,14)--(10,11.5)--(12.5,10)--(15,10)--(16.5,14)--(18.5,10)--(19.5,10)--(21,10.5)--(22.5,12)--(23,14)--(22,17);
\draw[edge, thick](15,17)--(15,21);
\draw[edge, thick](18,17)--(18,21);
\draw[edge, thick](15,10)--(18.5,10);
\draw[edge, thick](25,13)--(26,12)--(24.5,10.5)--(22.5,12)--(25,13)--(23,14)--(24.5,16)--(22,17);
\draw[edge, thick](24.5,16)--(25.7,17.8)--(23,18.5)--(24.5,20)--(25.7,17.8);
\draw[edge, thick](24.5,16)--(26,15);

\node[ver] () at (12.5,19){\scriptsize ${i}$};
\node[ver] () at (16.5,19){\scriptsize ${4}$};
\node[ver] () at (19.5,19){\scriptsize ${k}$};
\node[ver] () at (19.5,14){\scriptsize ${l}$};
\node[ver] () at (13, 13.5){\scriptsize ${j}$};
\node[ver] () at (16.5, 16){\scriptsize ${3}$};
\node[ver] () at (16.5, 11.5){\scriptsize ${3}$};
\node[ver] () at (23.2, 15.5){\scriptsize ${3}$};
\node[ver] () at (23.5, 13.1){\scriptsize ${3}$};
\node[ver] () at (22.5,11){\scriptsize ${k}$};
\node[ver] () at (24.5,14.5){\scriptsize ${j}$};
\node[ver] () at (24.5,12){\scriptsize ${4}$};
\node[ver] () at (24.5, 19){\scriptsize ${3}$};
\node[ver] () at (24, 17.4){\scriptsize ${4}$};
\end{tikzpicture}
\captionof{figure}{Configuration for mixed type $\{[3, 3, j, k], [3, 4, i, j]\}$}
\label{fig:3334} 
\end{figure}
 \textbf{B. iii)} For tuples $\mathfrak{k}_1= [3, 4, i, j]$ and $\mathfrak{k}_2 =[3, 5, i, k]$, $i< j$, $i < k$, we claim that there can only be type-2 4-gons in such a tiling. First note that the only possible neighborhood of any triangle is of the form $[4, i, l, 5, k, j]$. For a type-3 $4$-gon, we cannot consistently form a neighborhood of the triangles adjacent to it. A type-4 4-gon leads to Fig.~\ref{fig:34deg4}. It is straightforward to see that the $l$- and $m$-gon both can be neither $4$-gons nor $5$-gons. If $m$-gon is a $5$-gon, we cannot form a valid neighborhood around it or around the triangles adjacent to it for any values of $i, j, k$. If $m$ is a $4$-gon then we must have $k=j$ and $l=i$. It follows, by a similar argument used earlier in the proof of Theorem \ref{431prop} for tuples $ [3, 4, i, j]$ (see also Fig.~\ref{fig:34k3k4}), that $i$ and $j$ must be even. This contradicts the fact that $\mathfrak{k}_1 \in V_{not}$, and thus proves our claim.
 \begin{figure}[ht!]
\tikzstyle{ver}=[]
\tikzstyle{vert}=[circle, draw, fill=black!100, inner sep=0pt, minimum width=4pt]
\tikzstyle{vertex}=[circle, draw, fill=black!00, inner sep=0pt, minimum width=4pt]
\tikzstyle{edge} = [draw,thick,-]
\centering
\begin{tikzpicture}[scale=0.15]

\draw[edge, thick](16,12)--(11.5,9)--(11.5,4.8);
\draw[edge, thick](16,12)--(22,10.3)--(22,4.6);
\draw[edge, thick](11.5,4.8)--(14.4,1.7);
\draw[edge, thick](22,4.6)--(17.8,1.7)--(14.4,1.7);
\draw[edge, thick](16,12)--(22,14.5)--(22,10.3);
\draw[edge, thick](22,14.5)--(25.6,16.1);
\draw[edge, thick](22,14.5)--(22.2,16.8);
\draw[edge, thick](27.6,12.2)--(22,10.3);
\draw[edge, thick](22,4.6)--(24.8,2)--(28.2,2.5)--(30.8,3.8)--(31.5,6.2)--(31.3,9.4)--(27.6,12.2);
\draw[edge, thick](16,12)--(11.8,14.5)--(7.4,12)--(11.5,9);
\draw[edge, thick](11.8,14.5)--(9.3,16.6)--(14,16.6)--(11.8,14.5);
\draw[edge, thick](7.4,12)--(5,14)--(5,10)--(7.4,12);
\draw[edge, thick](11.5,9)--(7.4,7)--(11.5,4.8);
\draw[edge, thick](7.4,7)--(5,8.6);
\draw[edge, thick](7.4,7)--(5,4.5);

\node[ver] () at (11.8,15.6){\scriptsize ${3}$};
\node[ver] () at (6,12){\scriptsize ${3}$};
\node[ver] () at (10,7){\scriptsize ${3}$};
\node[ver] () at (19.7,12.1){\scriptsize ${3}$};
\node[ver] () at (11.6,11.5){\scriptsize ${4}$};

\node[ver] () at (24.5,13.2){\scriptsize ${m}$};
\node[ver] () at (8.7,14.3){\scriptsize ${i}$};
\node[ver] () at (16.5,6.6){\scriptsize ${i}$};
\node[ver] () at (15.8,14.2){\scriptsize ${j}$};
\node[ver] () at (25.5,7){\scriptsize ${k}$};
\node[ver] () at (23,16){\scriptsize ${l}$};

\node[ver] () at (7.8,9.3){\scriptsize ${j}$};
\node[ver] () at (22,2.8){\scriptsize ${ v_{1}}$};
\node[ver] () at (20.8,9.5){\scriptsize ${ v_{2}}$};

\end{tikzpicture}
\captionof{figure}{Configuration for mixed type $\{[3, 4, i, j], [3, 5, i, k]\}$}
\label{fig:34deg4} 
\end{figure}
Starting with a type-2 $4$-gon, the only possible neighborhood of the triangle adjacent to it with all the vertex types belonging to $V_{not}$ is shown in Fig.~\ref{fig:3435mixed}. Consequently, there must be two adjacent vertices of types 
$[3, 5, j, m]$ and $[3, 4, 5, m]$ for some $m\geq 5$. Then using formula~\ref{eq:sidelength} one can show that $\cosh(\ell_{[3, 4, 5, k]}/2)<1.1$. Therefore, $\ell_{[3, 5, j, m]}> \ell_{[3, 4, 5, k]}$ for all $i, k, m$ such that $\mathfrak{k}_1, \mathfrak{k}_2 \in V_{not}$.

\begin{figure}[H]
\tikzstyle{ver}=[]
\tikzstyle{vert}=[circle, draw, fill=black!100, inner sep=0pt, minimum width=4pt]
\tikzstyle{vertex}=[circle, draw, fill=black!00, inner sep=0pt, minimum width=4pt]
\tikzstyle{edge} = [draw,thick,-]
\centering
\begin{tikzpicture}[scale=0.15]

\draw[edge, thick](11,17)--(9,18)--(8.5,20)--(10.5,21.5)--(13,21.5)--(15,21)--(18,21)--(20,21.5)--(22,21.5)--(23,20.5)--(23,18.5)--(22,17);
\draw[edge, thick](11,17)--(15,17)--(18,17)--(22,17);
\draw[edge, thick](18,17)--(16.5,14)--(15,17);
\draw[edge, thick](11,17)--(9,15.5)--(9,14)--(10,12.5)--(13,12)--(16.5,14)--(20.5,13)--(22.5,15)--(22,17);
\draw[edge, thick](15,17)--(15,21);
\draw[edge, thick](18,17)--(18,21);
\draw[edge, thick](13,12)--(15, 10)--(18,10)--(19.5,10.5)--(20,11)--(20.5,13);

\node[ver] () at (12,19){\scriptsize ${i}$};
\node[ver] () at (16.5,19){\scriptsize ${4}$};
\node[ver] () at (19.5,19){\scriptsize ${k}$};
\node[ver] () at (19,15){\scriptsize ${5}$};
\node[ver] () at (13, 14.5){\scriptsize ${j}$};
\node[ver] () at (16.5, 16){\scriptsize ${3}$};
\node[ver] () at (16.5, 12){\scriptsize ${m}$};
\end{tikzpicture}

\captionof{figure}{Configuration for mixed type $\{[3, 4, i, j], [3, 5, j, k]\}$}
\label{fig:3435mixed} 

\end{figure}

\end{proof}
\subsection{Summary of Classification of Regular Polygon Protosets}
The primary result of the preceding sections is the complete classification of finite sets of regular polygons (protosets) that admit a tiling of $\mathbb{H}^2$. As established in Theorem~\ref{solv1}, the admissibility of a global tiling for any protoset is equivalent to the existence of a pseudo-homogeneous tiling of a specific type. We summarize these results in Table~\ref{tab:classification_summary}, providing a complete characterization of admissible and forbidden configurations.

\begin{table}[H]
\centering
\small
\renewcommand{\arraystretch}{1.3}
\begin{tabular}{|c|l|l|}
\hline
\textbf{Degree} & \textbf{Admissible Tuples (Tiling Exists)} & \textbf{Forbidden Tuples (No Tiling)} \\ \hline
$d \geq 6$ & All tuples satisfying $\vartheta(\mathfrak{k}) \geq 2$ & None \\ \hline
$d = 5$ & All tuples except $[3^5]$ & $[3^5]$ (Spherical) \\ \hline
$d = 4$ & 1. All tuples with $k_i \geq 4$ & 1. $[3, 3, k_3, k_4]$ ($k_3, k_4 \not\equiv 0 \pmod 3$) \\
        & 2. $[3, 3, 3p, 3q]$ & 2. $[3, 4, 5, 5]$ and $[3, 4, 6, 8]$ \\
        & 3. $[3, 4, 2p, 2q]$ & 3. $[3, 4, k_3, k_4]$ ($k_3, k_4$ not both even) \\
        & 4. $[3, 5, p, p]$ ($p \geq 5$) & 4. $[3, 5, 10, 11]$  \\
        & 5. $[3, 5, 10, k_4]$ ($k_4 \geq 12$) & 5. $[3, 5, k_3, k_4]$ ($k_3 \leq 11, k_3 \neq 10$) \\
        & 6. $[3, 5, k_3, k_4]$ ($k_3 \geq 12, k_4 \geq k_3$) & \\
        & 7. $[3, k_2, k_3, k_4]$ ($k_i \geq 6$) & \\ \hline
$d = 3$ & 1. $[p^3]$ ($p \geq 7$) & All other degree 3 tuples \\
        & 2. $[2n, 2n, q]$ ($\frac{1}{n} + \frac{1}{q} < \frac{1}{2}$) & \\
        & 3. $[2l, 2m, 2n]$ ($\frac{1}{l} + \frac{1}{m} + \frac{1}{n} < \frac{1}{2}$) & \\ \hline
\end{tabular}
\caption{Classification of regular polygon protosets that tile the hyperbolic plane.}
\label{tab:classification_summary}
\end{table}

\subsection{Heesch numbers of protosets of regular polygons} \label{heeschregular}
 For a tuple $\mathfrak{k}$ with $\vartheta(\mathfrak{k})>2$, the \textit{Heesch number} is defined to be the maximal nonnegative integer $r$ such that the tuple admits a pseudo-homogeneous tiling of $r$ complete layers. By convention, if the tuple admits a tiling of the entire plane, its Heesch number is defined to be infinite \cite{BB15}. Similarly, for a protoset $\mathcal{F}$, the Heesch number is the maximal nonnegative integer $r$ such that each prototile from $\mathcal{F}$ can be surrounded by $r$ layers of tiling. The \textit{Heesch problem} asks which integers can occur as Heesch numbers for a given class of tiles. From our classification of tuples that do admit pseudo-homogeneous tilings $\mathbb{H}^2$, it becomes obvious that the Heesch numbers are bounded for all tuples that do not admit a tiling. 
 \par
The set of tuples with finite Heesch number is obviously the set $V_{not}$. The proof of non-existence of tiling for those tuples in essence tracks down the Heesch numbers which we list them below. 
 
\[ \mathfrak{H}([3, 3, k_3, k_4])= 0; \mathfrak{H}([3, 4, 5, 5])= 5; \mathfrak{H}([3, 4, 6, 8])= 3; \mathfrak{H}([3, 4, k_3, k_4])= 1 \]
\[ \mathfrak{H}([3, 5, 6, k_4]) = 1, \mathfrak{H}([3, 5, 9, k_4]) =1, \mathfrak{H}([3, 5, 7, k_4]) = 2, \mathfrak{H}([3, 5, 11, k_4]) = 3\]

To produce the Heesch numbers for any protosets of regular polygons, we need only to consider protosets of regular polygons whose size must form tuple of mixed types. We have shown in Theorem \ref{solv1} that for most of those mixed tuple, there is a length mismatch, that is, $\ell (\mathfrak{k}_1) \neq \ell(\mathfrak{k}_2)$ for any $\mathfrak{k}_1, \mathfrak{k}_2 \in V_{not}$, $\mathfrak{k}_1 \neq \mathfrak{k}_2$. It will be shown in a future article that the same holds for all pair of tuples in $V_{not}$.

\section{Aperiodic tiles} \label{aperiodic}
In the search for an aperiodic set of regular tiles, we focus on one of the protosets that poses the strongest constraint in constructing the tiling, namely, the tuple $[3, 5, k_3, k_4]$ presented above in Theorem \ref{431prop}. 
\\
Let us denote by $\mathcal{A}$ the set of regular polygons of size $[3, 5, k_3, k_4]$, $10 \leq k_3 <k_4$, $k_3 \neq 11$, such that they form a fan around a vertex.
 \par
\subsection{Weak aperiodicity of tile set $\mathcal{A}$} For a tiling of type $[3, 5, k_3, k_4], 10 \leq k_3 <k_4$, $k_3 \neq 11$, let $c_1(T, P)$ and $c_2(T, P)$ denote the number of (triangle, pentagon) pairs so that the intersection $T\cap P$ is a single vertex and an edge, respectively. 
\\
Suppose a strongly periodic tiling (tiling with compact quotient) of this type exist. The total incidence counts for $c_1(T, P)$ and $c_2(T, P)$ can be determined by evaluating the local constraints from two perspectives: 
\begin{itemize}
 \item From the perspective of the triangles: By Observation \ref{A1}, each triangle in such a tiling is attached to exactly one pentagon by a single vertex and exactly one pentagon by an edge, implying the ratio $c_1(T, P): c_2(T, P)$ is $1:1$.
 \item From the perspective of the pentagons: By Observations \ref{A2}, each pentagon is incident with three triangles at distinct vertices, which yields the ratio $c_1(T, P): c_2(T, P)$ is $1:3$.
\end{itemize}
This contradiction shows that no strongly periodic tiling such type exists. It follows that  the set $\mathcal{A}$ of regular polygons is weakly aperiodic.

\begin{proposition}
There exists a tiling of $\mathbb{H}^2$ of type $[3, 5, k_3, k_4]$ with infinite cyclic symmetry. Hence the tile set $\mathcal{A}$ is not strongly aperiodic. 
\end{proposition}
\begin{proof} We will construct an infinite strip of tiling between two ultraparallel piecewise hypercycles so that a tiling of $\mathbb{H}^2$ can be constructed by joining the copies of the strip using repeated application of an isometry of hyperbolic type. 
\\
Let $S$ denote the chain consisting of an infinite sequence (in the forward direction) of regular triangles and regular pentagons (see Figure \ref{chain}) from the tile set $\mathcal{A}$,  along with their neighborhoods satisfying the following adjacency rules: 
\\
(1) for every $i \in \mathbb{Z}$, the triangle $T_i$ and the pentagon $P_i$ intersect at exactly one shared geodesic edge, denoted $e_i$ and the pentagon $P_i$.
\\
 (2) the subsequent triangle $T_{i+1}$ intersect at exactly one shared vertex, denoted $v_i$ and the shared vertex $v_i$ is strictly the vertex of $P_i$ opposite the edge $e_i$.
 \\
 Note that for a tiling of type $[3, 5, k_3, k_4]$, the conditions Observations \ref{A1}--\ref{A2} forces every triangle and pentagon to lie in one such infinite chain.

\begin{figure}[H]
\centering

	\begin{tikzpicture}[scale=0.6, thick, transform shape] 
		
		\def\s{1.5} 
		\def\sBranch{0.85} 
		\def\sOut{1} 
		
		\def\dLabel{0.55} 
		
		\pgfmathsetmacro{\hTri}{\s*sqrt(3)/2} 
		\pgfmathsetmacro{\rPent}{\s/(2*sin(36))} 
		\pgfmathsetmacro{\aPent}{\s/(2*tan(36))} 
		\pgfmathsetmacro{\wPent}{\s/2 * cot(18)} 
		\pgfmathsetmacro{\xStep}{\wPent + \hTri} 
		
		\begin{scope}[shift={(-\xStep, 0)}]
			\coordinate (T-top) at (0, \s/2);
			\coordinate (T-bot) at (0, -\s/2);
			\coordinate (T-left) at (-\hTri, 0);
			
			\coordinate (P-center) at (\aPent, 0);
			\coordinate (P-v1) at ($(P-center) + (144:\rPent)$); 
			\coordinate (P-v2) at ($(P-center) + (72:\rPent)$); 
			\coordinate (P-v3) at ($(P-center) + (0:\rPent)$); 
			\coordinate (P-v4) at ($(P-center) + (288:\rPent)$); 
			\coordinate (P-v5) at ($(P-center) + (216:\rPent)$); 
			
			\coordinate (TopBranch-left) at ($(P-v2) + (102:\sBranch)$);
			\coordinate (TopBranch-right) at ($(P-v2) + (42:\sBranch)$);
			\coordinate (BotBranch-left) at ($(P-v4) + (258:\sBranch)$);
			\coordinate (BotBranch-right) at ($(P-v4) + (318:\sBranch)$);
			
			\coordinate (Out-top) at ($(T-top) + (114:\sOut)$); 
			\coordinate (Out-bot) at ($(T-bot) + (246:\sOut)$); 
			
			\draw (T-top) -- (T-bot) -- (T-left) -- cycle;
			\draw (P-v1) -- (P-v2) -- (P-v3) -- (P-v4) -- (P-v5) -- cycle;
			\draw (P-v2) -- (TopBranch-left) -- (TopBranch-right) -- cycle;
			\draw (P-v4) -- (BotBranch-left) -- (BotBranch-right) -- cycle;
			\draw (T-top) -- (Out-top);
			\draw (T-bot) -- (Out-bot);
			
			\node at (-\hTri/2+0.2, 0) { $T_{1}$};
			\node at (P-center) { $P_{1}$};
			
			\node at ($(T-top) + (155:\dLabel)$) {$k_3$};
			\node[xshift=0.4cm] at ($(T-top) + (80:\dLabel)$) {$k_4$};
			\node at ($(T-bot) + (205:\dLabel)$) {$k_4$};
			\node [xshift=0.4cm] at ($(T-bot) + (290:\dLabel)$) {$k_3$};
			\node[xshift=-1.6cm] at ($(T-bot) + (50:\dLabel)$) {$v_{1}$};
		\end{scope}
		
		\begin{scope}[shift={(0, 0)}]
			\coordinate (T-top) at (0, \s/2);
			\coordinate (T-bot) at (0, -\s/2);
			\coordinate (T-left) at (-\hTri, 0);
			
			\coordinate (P-center) at (\aPent, 0);
			\coordinate (P-v1) at ($(P-center) + (144:\rPent)$); 
			\coordinate (P-v2) at ($(P-center) + (72:\rPent)$); 
			\coordinate (P-v3) at ($(P-center) + (0:\rPent)$); 
			\coordinate (P-v4) at ($(P-center) + (288:\rPent)$); 
			\coordinate (P-v5) at ($(P-center) + (216:\rPent)$); 
			
			\coordinate (TopBranch-left) at ($(P-v2) + (102:\sBranch)$);
			\coordinate (TopBranch-right) at ($(P-v2) + (42:\sBranch)$);
			\coordinate (BotBranch-left) at ($(P-v4) + (258:\sBranch)$);
			\coordinate (BotBranch-right) at ($(P-v4) + (318:\sBranch)$);
			
			\coordinate (Out-top) at ($(T-top) + (114:\sOut)$); 
			\coordinate (Out-bot) at ($(T-bot) + (246:\sOut)$); 
			
			\draw (T-top) -- (T-bot) -- (T-left) -- cycle;
			\draw (P-v1) -- (P-v2) -- (P-v3) -- (P-v4) -- (P-v5) -- cycle;
			\draw (P-v2) -- (TopBranch-left) -- (TopBranch-right) -- cycle;
			\draw (P-v4) -- (BotBranch-left) -- (BotBranch-right) -- cycle;
			\draw (T-top) -- (Out-top);
			\draw (T-bot) -- (Out-bot);
			
			\node at (-\hTri/2, 0) { $T_{2}$};
			\node at (P-center) { $P_{2}$};
			
			\node[xshift=-0.4cm] at ($(T-top) + (155:\dLabel)$) {$k_3$};
			\node[xshift=0.2cm] at ($(T-top) + (70:\dLabel)$) {$k_4$};
			\node[xshift=-0.4cm]at ($(T-bot) + (205:\dLabel)$) {$k_4$};
			\node[xshift=0.2cm] at ($(T-bot) + (305:\dLabel)$) {$k_3$};
			\node[xshift=-1.5cm] at ($(T-bot) + (50:\dLabel)$) {$v_{2}$};
		\end{scope}
		
		\begin{scope}[shift={(\xStep, 0)}]
			\coordinate (T-top) at (0, \s/2);
			\coordinate (T-bot) at (0, -\s/2);
			\coordinate (T-left) at (-\hTri, 0);
			
			\coordinate (P-center) at (\aPent, 0);
			\coordinate (P-v1) at ($(P-center) + (144:\rPent)$); 
			\coordinate (P-v2) at ($(P-center) + (72:\rPent)$); 
			\coordinate (P-v3) at ($(P-center) + (0:\rPent)$); 
			\coordinate (P-v4) at ($(P-center) + (288:\rPent)$); 
			\coordinate (P-v5) at ($(P-center) + (216:\rPent)$); 
			
			\coordinate (TopBranch-left) at ($(P-v2) + (102:\sBranch)$);
			\coordinate (TopBranch-right) at ($(P-v2) + (42:\sBranch)$);
			\coordinate (BotBranch-left) at ($(P-v4) + (258:\sBranch)$);
			\coordinate (BotBranch-right) at ($(P-v4) + (318:\sBranch)$);
			
			\coordinate (Out-top) at ($(T-top) + (114:\sOut)$); 
			\coordinate (Out-bot) at ($(T-bot) + (246:\sOut)$); 
			
			\draw (T-top) -- (T-bot) -- (T-left) -- cycle;
			\draw (P-v1) -- (P-v2) -- (P-v3) -- (P-v4) -- (P-v5) -- cycle;
			\draw (P-v2) -- (TopBranch-left) -- (TopBranch-right) -- cycle;
			\draw (P-v4) -- (BotBranch-left) -- (BotBranch-right) -- cycle;
			\draw (T-top) -- (Out-top);
			\draw (T-bot) -- (Out-bot);
			
			\node at (-\hTri/2+0.2, 0) { $T_{3}$};
			\node at (P-center) {$P_{3}$};
			
			\node[xshift=-0.4cm] at ($(T-top) + (155:\dLabel)$) {$k_3$};
			\node[xshift=0.3cm] at ($(T-top) + (70:\dLabel)$) {$k_4$};
			\node[xshift=-0.4cm] at ($(T-bot) + (205:\dLabel)$) {$k_4$};
			\node[xshift=0.3cm] at ($(T-bot) + (290:\dLabel)$) {$k_3$};
			\node[xshift=-1.5cm] at ($(T-bot) + (45:\dLabel)$) {$v_3$};
		\end{scope}
		
		\node[scale=1.5] at (-\xStep*1.5, 0) {$\dots$};
		\node[scale=1.5] at (\xStep*1.8, 0) {$\dots$};
		
	\end{tikzpicture}
\captionof{figure}{Sequence of alternating triangle and type-2 pentagons along a hypercyle}
\label{chain}
\end{figure}

Recall that a path defined by segments of constant length and constant turning angles traces a curve of constant geodesic curvature. The piecewise geodesic path connecting the single common vertices of triangles and pentagons ( $v_1, v_2, v_3, \cdots$ ) in $S$ circumscribes an infinite hypercycle segment in $\mathbb{H}^2$. 
\\
Let us consider two pentagons $P_1$ and $P_2$ that are adjacent to a common triangle by a vertex and opposite edge, respectively, as illustrated in Figure \ref{strip}. Let $(C_1, C^{'}_1)$ and $(C_2, C_2^{'})$ be pairs of chains of type $S$ (described above) starting from the two vertices of the pentagons $P_1$ and $P_2$ respectively, as shown in the figure. Thus, we have two bi-infinite chains of polygons $[C_1, P_1, C^{'}_1]$ and $[C_2, P_2, C^{'}_2]$ 
\\
We can construct a strip of a tiling of type $[3, 5, k_3, k_4]$ between two chains of polygons $[C_1, P_1, C^{'}_1]$ and $[C_2, P_2, C^{'}_2]$ using the construction method described in Theorem \ref{431prop} as any tiling starting from the vertex $v_2$ is forced to have these chains along the boundary of the strip. 
\\
Recall that two hypercycles are called ultraparallel if their supporting geodesic axes are ultraparallel. Any two hypercycles with the same geodesic curvature are equidistant from their respective axes, and are therefore congruent via a translation of hyperbolic type \cite{B83, JA05}. Consequently, the chains $[C_1, P_1, C^{'}_1]$ and $[C_2, P_2, C^{'}_2]$ are congruent via a translation of hyperbolic type. Thus, an entire tiling of type $[3, 5, k_3, k_4]$ can be constructed by joining the copies of such strips through a translation of hyperbolic type. Hence we obtain a tiling with infinite cyclic symmetry.

\begin{figure}[H]
 \centering

	\begin{tikzpicture}[
		scale=0.6, 
		transform shape, 
		 thick,
		font=\sffamily\large,
		]
		
		\def\addshapes#1#2{
			\path let \p1=($(#2)-(#1)$), \n1={veclen(\x1,\y1)}, \n2={atan2(\y1,\x1)} in
			coordinate (p1) at (#1)
			coordinate (p2) at (#2)
			coordinate (p3) at ($ (p2) + (\n2-72:\n1) $)
			coordinate (p4) at ($ (p3) + (\n2-144:\n1) $) 
			coordinate (p5) at ($ (p4) + (\n2-216:\n1) $)
			
			coordinate (t2) at ($ (p4) + (\n2-120:\n1) $)
			coordinate (t3) at ($ (p4) + (\n2-60:\n1) $)
			
			coordinate (d1) at ($ (p4) + (\n2-90:1.3*\n1) $)
			coordinate (d2) at ($ (p4) + (\n2-90:1.6*\n1) $)
			coordinate (d3) at ($ (p4) + (\n2-90:1.9*\n1) $);
			
			\draw[line join=round] (p1) -- (p2) -- (p3) -- (p4) -- (p5) -- cycle;
			
			\draw[line join=round] (p4) -- (t2) -- (t3) -- cycle;
			
			\fill (d1) circle (1.5pt);
			\fill (d2) circle (1.5pt);
			\fill (d3) circle (1.5pt);
		}
		
		\coordinate (vi) at (0,0);
		
		\coordinate (P1_1) at (-1.2, -1.5);
		\coordinate (P1_2) at (-0.7, -3);
		\coordinate (P1_3) at (0.7, -3);
		\coordinate (P1_4) at (1.2, -1.5);
		
		\coordinate (T_top) at (-0.7, 1.2);
		\coordinate (T_bot) at (0.7, 1.2);
		
		\coordinate (P2_1) at (-1.2, 2.5);
		\coordinate (P2_2) at (0, 3.5);
		\coordinate (P2_3) at (1.2, 2.5);
		
		\coordinate (TL1) at (-2.2, -1.8);
		\coordinate (TL2) at (-1.9, -0.6);
		
		\coordinate (BL1) at (2.2, -1.8);
		\coordinate (BL2) at (1.9, -0.6);
		
		\coordinate (TR1) at (-2.2, 2.2);
		\coordinate (TR2) at (-1.9, 3.4);
		
		\coordinate (BR1) at (2.2, 2.2);
		\coordinate (BR2) at (1.9, 3.4);

		\addshapes{TL2}{TL1}
		\addshapes{BL1}{BL2}
		\addshapes{TR2}{TR1}
		\addshapes{BR1}{BR2}

		\draw (vi) -- (P1_1) -- (P1_2) -- (P1_3) -- (P1_4) -- cycle;
		\draw (T_top) -- (P2_1) -- (P2_2) -- (P2_3) -- (T_bot) -- cycle;
		
		\draw (vi) -- (T_top) -- (T_bot) -- cycle;
		
		\draw (P1_1) -- (TL1) -- (TL2) -- cycle;
		\draw (P1_4) -- (BL1) -- (BL2) -- cycle;
		\draw (P2_1) -- (TR1) -- (TR2) -- cycle;
		\draw (P2_3) -- (BR1) -- (BR2) -- cycle;
		
		\draw (T_top) -- (-1.8, 0.5);
		\draw (T_bot) -- (1.8, 0.5);

		\node at (-7.2, 0.0) {$C_1$};
		\node at (-7.2, 4) {$C_2$};
		\node at (7.2, 0.0) {$C_1^{'}$};
		\node at (7.2, 4) {$C_2^{'}$};
		
		\node at (0, -1.6) {$P_{1}$};
		\node at (0, 2.2) {$P_{2}$};
		\node at (0, 0.6) {$T_2$};
		\node at (0.4, 0) {$v_2$};
		
		\node at (-1.3, -2.0) {$k_4$};
		\node at (1.3, -2.0) {$k_3$};
		\node at (-1.0, 0.2) {$k_3$};
		\node at (1.0, 0.2) {$k_4$};
		\node at (-1.3, 1.5) {$k_4$};
		\node at (1.3, 1.5) {$k_3$};
		
	\end{tikzpicture}
	
\captionof{figure}{Strip of tiling between two hypercycles}
\label{strip}
\end{figure}

\end{proof}
\textbf{Symmetry of finite orders:} For $k_3$ and $k_4$ prime, none of the tiling of type $[3,5,k_3,k_4]$ admits a rotational symmetry. This is because neither the fans and neighborhoods of any of the polygons has a rotational symmetry for such type. The same can be said about reflectional symmetry. However, for $k_3$ or $k_4$ multiple of $4$, there exists a tiling with rotational symmetry. Such tiling can be constructed starting with a neighbourhood of a $k_3$-gon with rotational symmetry and forced chains of polygons as shown Figure \ref{chain}.

\subsection{Tiling Space} \label{tilingspace}
Let $\Omega([3, 5, k_3, k_4])$ be the space of all tilings of $\mathbb{H}^2$ of type $[3, 5, k_3, k_4]$ endowed with natural local topology \cite{S08}, see \S \ref{appen1} for more details. The tilings of type $[3, 5, k_3, k_4]$ exhibit the following kind of a strong extendability property that shed lights into the topology of the space $\Omega([3, 5, k_3, k_4])$.			
 Let \(\mathcal{B}\) be the set of all admissible (partial or full) neighborhoods of the faces in $[3, 5, k_3, k_4]$; that is, the neighborhoods satisfying the Observations \ref{A1}--\ref{A2}.

\begin{lemma} \label{extn}
Any locally consistent $n$-layer of tiling of type $[3, 5, k_3, k_4]$ for $k_3 \geq 12, k_4 \geq 14$ composed of elements from $\mathcal{B}$ can be extended to a full tiling of $\mathbb{H}^2$.
\end{lemma}

\begin{proof} We proceed by induction, mirroring the existence proof of tiling of type $[3, 5, k_3, k_4]$ in Theorem \ref{431prop} above. To establish extendability of $n$-layer to $n+1$-layer, it suffices to show that any valid placement of triangles around the $5$-gons in $C_k$ results in admissible partial neighbourhoods around the big-gons ($k_3$- and $k_4$-gon). Specifically, we must ensure that $\nabla$-triangles are never attached to three consecutive vertices of a big-gon.\\
In Figure \ref{around5}, we illustrate the neighborhoods of a $k_3$-gon induced by all possible admissible configurations around the $5$-gons. This exhaustive set includes all the possibilities previously illustrated in Fig. \ref{fig:357part1}. It is straightforward to verify that each case yields an admissible partial neighborhood around the $k_3$-gon. The same argument works for $k_4$-gon. 
\\
Finally, one can verify that any admissible neighborhood formation around the $k_3$-gons leads to an admissible partial neighborhood around adjacent the $k_4$, if any, and vice-versa. Because no local choice forces a violation of A1--A2 at the $n+1$ layer, the induction holds, thereby proving the global existence of the tiling. 

\begin{figure}[H]
 \centering
\subfigure[$k_3(6)$]{
\tikzstyle{ver}=[]
\tikzstyle{vert}=[circle, draw, fill=black!100, inner sep=0pt, minimum width=4pt]
\tikzstyle{vertex}=[circle, draw, fill=black!00, inner sep=0pt, minimum width=4pt]
\tikzstyle{edge} = [draw,thick,-]
\centering
\begin{tikzpicture}[scale=0.18]
\draw[edge, thick](30,1)--(48.5,1);
\draw[edge, thick](30,-5)--(48.5,-5);
\draw[edge, thick](34,1)--(35.5,1)--(37,-2.5)--(38,1)--(42,1);
\draw[edge, thick] (32,1)--(34,-2.5)--(35.5, -5)--(37,-2.5)--(34, -2.5);
\draw[edge, thick] (45,-2.5)--(43.5,-5)-- (42,-2.5)--(45,-2.5)--(46,1);
\draw[edge, thick] (43.5, 1)--(42,-2.5)--(41,1);
\node[ver] () at (38,-6){5};
\node[ver] () at (39.5,-1){\scriptsize ${k_3}$};
\node[ver] () at (28,-5){\scriptsize ${\partial X_k}$};
\end{tikzpicture} }
\centering
\subfigure[$k_3(6)$]{
\tikzstyle{ver}=[]
\tikzstyle{vert}=[circle, draw, fill=black!100, inner sep=0pt, minimum width=4pt]
\tikzstyle{vertex}=[circle, draw, fill=black!00, inner sep=0pt, minimum width=4pt]
\tikzstyle{edge} = [draw,thick,-]
\centering
\begin{tikzpicture}[scale=0.18]
\draw[edge, thick](30,1)--(48.5,1);
\draw[edge, thick](30,-5)--(48.5,-5);
\draw[edge, thick](34,1)--(35.5,1)--(37,-2.5)--(38,1)--(42,1);

\draw[edge, thick] (32,1)--(34,-2.5)--(35.5, -5)--(37,-2.5)--(34, -2.5);
\draw[edge, thick] (41.5,-5)--(43, -2.5)-- (45,-5);
\draw[edge, thick] (47, 1)--(43,-2.5)--(43,1);
\draw[edge, thick](41.5,-5)--(41.5,-7);
\node[ver] () at (38,-6){5};
\node[ver] () at (39.5,-2){\scriptsize ${k_3}$};
\end{tikzpicture} }
\centering
\subfigure[$k_3(5)$]{
\tikzstyle{ver}=[]
\tikzstyle{vert}=[circle, draw, fill=black!100, inner sep=0pt, minimum width=4pt]
\tikzstyle{vertex}=[circle, draw, fill=black!00, inner sep=0pt, minimum width=4pt]
\tikzstyle{edge} = [draw,thick,-]
\centering
\begin{tikzpicture}[scale=0.18]
\draw[edge, thick](30,1)--(48.5,1);
\draw[edge, thick](30,-5)--(48.5,-5);
\draw[edge, thick](34,1)--(35.5,1)--(37,-2.5)--(38,1)--(42,1);

\draw[edge, thick] (32,1)--(34,-2.5)--(35.5, -5)--(37,-2.5)--(34, -2.5);
\draw[edge, thick] (41.5,-5)--(43, -2.5)-- (45,-5);
\draw[edge, thick] (47, 1)--(43,-2.5)--(43,1);
\draw[edge, thick](41.5,-5)--(41,1);
\node[ver] () at (38,-6){5};
\node[ver] () at (39.5,-2){\scriptsize ${k_3}$};
\end{tikzpicture} }
\centering
\subfigure[$k_3(4)$]{
\tikzstyle{ver}=[]
\tikzstyle{vert}=[circle, draw, fill=black!100, inner sep=0pt, minimum width=4pt]
\tikzstyle{vertex}=[circle, draw, fill=black!00, inner sep=0pt, minimum width=4pt]
\tikzstyle{edge} = [draw,thick,-]
\centering
\begin{tikzpicture}[scale=0.18]
\draw[edge, thick](30,1)--(48.5,1);
\draw[edge, thick](30,-5)--(48.5,-5);

\draw[edge, thick](31,-5)--(31,-7);
\draw[edge, thick](47,-5)--(47,-7);
\draw[edge, thick](35,-2.5)--(36.5,1)--(42,1);

\draw[edge, thick] (33,1)--(35,-2.5)--(33, -5);
\draw[edge, thick] (35,-2.5)--(37, -5)--(38, 1);
\draw[edge, thick] (41.5,-5)--(43, -2.5)-- (45,-5);
\draw[edge, thick] (47, 1)--(43,-2.5)--(43,1);
\draw[edge, thick](41.5,-5)--(41,1);
\node[ver] () at (38,-6){5};
\node[ver] () at (28,-5){\scriptsize ${\partial X_k}$};
\node[ver] () at (39.5,-2){\scriptsize ${k_3}$};
\end{tikzpicture} }
\centering
\subfigure[$k_3(4)$]{
\tikzstyle{ver}=[]
\tikzstyle{vert}=[circle, draw, fill=black!100, inner sep=0pt, minimum width=4pt]
\tikzstyle{vertex}=[circle, draw, fill=black!00, inner sep=0pt, minimum width=4pt]
\tikzstyle{edge} = [draw,thick,-]
\centering
\begin{tikzpicture}[scale=0.18]
\draw[edge, thick](15,1)--(38,1);
\draw[edge, thick](15,-5)--(38,-5);
\draw[edge, thick](22,1)--(22,-2.5)--(27,-2.5)--(27,1)--(30,1);
\draw[edge, thick] (16,1)--(22,-2.5)--(24.5, -5)--(27,-2.5)--(32, 1);
\node[ver] () at (20.5,-0.5){\scriptsize $5$};
\node[ver] () at (25,-6){5};
\node[ver] () at (28.5,-0.5){\scriptsize ${ k_{4}}$};
\node[ver] () at (24.5,-1){\scriptsize ${k_3}$};
\end{tikzpicture}}
\centering
\subfigure[$k_3(3)$]{
\tikzstyle{ver}=[]
\tikzstyle{vert}=[circle, draw, fill=black!100, inner sep=0pt, minimum width=4pt]
\tikzstyle{vertex}=[circle, draw, fill=black!00, inner sep=0pt, minimum width=4pt]
\tikzstyle{edge} = [draw,thick,-]
\centering
\begin{tikzpicture}[scale=0.18]
\draw[edge, thick](30,1)--(48.5,1);
\draw[edge, thick](30,-5)--(48.5,-5);
\draw[edge, thick](34,1)--(35,1)--(37,-2.5)--(40,1)--(42,1);
\draw[edge, thick](35,1)--(36,2.5)--(37,1);
\draw[edge, thick] (32,1)--(34,-2.5)--(35.5, -5)--(37,-2.5)--(34, -2.5);
\draw[edge, thick] (41.5,-5)--(43, -2.5)-- (45,-5);
\draw[edge, thick] (47, 1)--(43,-2.5)--(43,1);
\node[ver] () at (38,-6){5};
\node[ver] () at (37.5,-0.5){\scriptsize ${k_3}$};
\node[ver] () at (34.5,-0.5){\scriptsize $5$};
\end{tikzpicture} }
\captionof{figure}{Partial eighborhoods of $k_3$-gons for type $[3, 5, k_3, k_4]$}
\label{around5}
\end{figure}

\end{proof}

\begin{corollary} \label{strongexp} The tiling Space $ \Omega=\Omega({3, 5, k_3, k_4})$ is homeomorphic to a Cantor set. 
\end{corollary}
\begin{proof}
We prove the standard characterization: a nonempty, compact, metrizable, perfect, totally disconnected space is homeomorphic to the Cantor set. 
\\
The space $\Omega$ is obviously a nonempty metric space. Since there are only a finite number of tiles meeting edge-to-edge in every tilings (ensures finite local complexity (FLC)), compactness and total disconnectedness follows using standard arguments \cite{S08}. It follows from Lemma \ref{strongexp} that no finite patch of tiling determines an entire tiling, i.e, the space has no isolated points. Thus $\Omega$ is a Cantor set.\\

\end{proof}

The computation and implications of the cohomology of the zeroth Anderson-Putnam approximant of the tiling space associated to the particular type $[3, 5, 12, 14]$ is presented in Appendix \ref{appen}.

\section{Concluding remarks}\label{conclu}
Our results suggest several new directions for further research. We discuss a few of them in this section. 
\par
\textbf{1}. We expect a plethora of weakly aperiodic protosets of regular polygons to exist, in addition to those presented here. In the context of strong aperiodicity, we propose the following conjecture:
\begin{conjecture}
 There does not exist strongly aperiodic protoset of regular polygons in $\mathbb{H}^2$. 
 \end{conjecture}
It can be shown that if a set of six or more regular polygons admit (pseudo-homogeneous) tiling then it admits a tiling with translation symmetry. This can also be shown to be true for many of the $4$- and $5$-tuples. For example, there exist a tiling of type $[3, 5, k_3, k_4]$ with translational symmetry.
\par
\textbf{2}. The domino problem for homogeneous tilings remains an open problem \cite{DG18}. A recent article (to appear soon) by the present author establishes the unboundedness of the Heesch problem in that setting.

\section*{Acknowledgement} The author would like to thank Marek Čtrnáct for some valuable suggestions. The author also thanks Zeno, the creator of HyperRouge, for technical assistance in rendering Fig.~\ref{351012}. The research was partially supported by Science and Engineering Research Board (SERB) grant (CRG/2019/007028) for the year 2020-2021, and Seed Grant DoRDC/730 of TIET for the year 2024-25. 
\bibliographystyle{amsplain}
\bibliography{bibtile}

\appendix \label{appen}
\section{Tiling Space for vertex type $[3, 5, 12, 14]$} \label{appen1}

In this section we study a few topological properties of the tiling space of the aperiodic tile set associated to the tuple $[3, 5, 12, 14]$) \S \ref{aperiodic}. 
\\
\begin{definition}
	Let $\Omega_{\mathcal P}$ denote the set of all full tilings $T$ of $\mathbb H^2$ by $\mathcal P$ for which the distinguished point $o$ lies in some tile of $T$. \end{definition}
	
Note that we keep the location $o$ fixed and do not quotient by global isometries. This restriction defines a canonical transversal of the continuous tiling space.

\paragraph{Metric:} For two tilings $T,S\in\Omega_{\mathcal P}$, define
	\[
	R(T,S):=\sup\{\, R\ge 0:\ \text{the radius-$R$ centered patches of $T$ and $S$ are identical}\,, \]
	\[
	\qquad d(T,S):=2^{-R(T,S)}.
	\} \]
	By FLC the supremum is a nonnegative integer (possibly $\infty$) and $d$ is a metric inducing the cylinder topology \cite{ B13, S08}. For a radius-$R$ centered patch $P$ define the \emph{cylinder set}
		\[
		C(P) \;=\; \{\, T\in\Omega_{\mathcal P}:\ \text{the radius-$R$ centered patch of }T\text{ equals }P \,\}.
		\]
		These cylinder sets (for varying $R$ and $P\in\mathcal P_R$) form a basis for the \emph{local topology} on $\Omega_{\mathcal P}$.

\subsection{Transition Matrix.} We first establish the combinatorial constraints on the neighborhood of $12$-gon. In any tiling of type $[3, 5, 12, 14]$, the $5$- and $14$-gons incident (share edge) to a common $12$-gon must appear in alternating sequence (potentially separated by a triangle). Consequently, their count within the neighborhood must be equal. This implies that an even number of edges of the $12$-gon are shared with $5$- and $14$-gons, which further necessitates that the number of $\Delta$-triangles in the neighborhood is even. On the other hand, counting the common vertices with the triangles in the neighborhood, the total count of $\Delta$-triangles and $\nabla$-triangles must be $12$. Consequently, the number of $\Delta$- and $\nabla$- triangles in the neighborhood both must also be even. A similar argument works for the $14$-gon. 
\\
These parity observations limits the number of admissible neighborhoods around the $12$-gon and $14$-gon to the following configurations:
\begin{figure}[H]
\centering
\resizebox{.8\linewidth}{!}{
\subfigure[$D_1$]{\begin{tikzpicture}[line width=0.75pt, scale=1.33]
\tikzstyle{vert}=[circle, draw, fill=black!100, inner sep=0pt, minimum width=1pt]

\node[regular polygon, regular polygon sides=12, minimum size=3cm, draw] (a) {};
\coordinate (C) at (a.center);
\node at (C) {12};

\foreach \i in {1,...,12}{
 \path (a.corner \i) coordinate (P\i);
}

\draw (P1) -- ($ (P1)!1!{-60}:(P2) $) -- (P2);
\draw (P3) -- ($ (P3)!1!{-60}:(P4) $) -- (P4);
\draw (P5) -- ($ (P5)!1!{-60}:(P6) $) -- (P6);
\draw (P7) -- ($ (P7)!1!{-60}:(P8) $) -- (P8);
\draw (P9) -- ($ (P9)!1!{-60}:(P10) $) -- (P10);
\draw (P11) -- ($ (P11)!1!{-60}:(P12) $) -- (P12);

 \pgfmathsetmacro{\polyR}{3/2} 
 \pgfmathsetmacro{\radius}{\polyR*1.07} 

 \path (C) ++(73:\radius) coordinate (Q1);
 \path (C) ++(105:\radius) coordinate (Q2);
 \path (C) ++(135:\radius) coordinate (Q3);
 \path (C) ++(170:\radius) coordinate (Q4);
 \path (C) ++(195:\radius) coordinate (Q5);
 \path (C) ++(225:\radius) coordinate (Q6);
 \path (C) ++(250:\radius) coordinate (Q7);
 \path (C) ++(285:\radius) coordinate (Q8);
 \path (C) ++(315:\radius) coordinate (Q9);
 \path (C) ++(345:\radius) coordinate (Q10);
 \path (C) ++(15:\radius) coordinate (Q11);
 \path (C) ++(45:\radius) coordinate (Q12);

 \draw (P1) -- (Q1);
 \draw (P2) -- (Q2);
 \draw (P3) -- (Q3);
 \draw (P4) -- (Q4);
 \draw (P5) -- (Q5);
 \draw (P6) -- (Q6);
 \draw (P7) -- (Q7);
 \draw (P8) -- (Q8);
 \draw (P9) -- (Q9);
 \draw (P10) -- (Q10);
 \draw (P11) -- (Q11);
 \draw (P12) -- (Q12);

\pgfmathsetmacro{\polyR}{3/2}
\pgfmathsetmacro{\radius}{\polyR*0.85}

\node[scale=0.75] at ($(C)+(0:\radius)$) {5};
\node[scale=0.75] at ($(C)+(60:\radius)$) {14};

\node[scale=0.75] at ($(C)+(120:\radius)$) {5};
\node[scale=0.75] at ($(C)+(180:\radius)$) {14};
\node[scale=0.75] at ($(C)+(240:\radius)$) {5};
\node[scale=0.75] at ($(C)+(300:\radius)$) {14};

\end{tikzpicture}}
\subfigure[$D_2$]{\begin{tikzpicture}[line width=0.75pt, scale=1.3]

 \node[regular polygon, regular polygon sides=12, minimum size=3cm, draw] (a) {};
 \coordinate (C) at (a.center);
 \node at (C) {12};

 \path (a.corner 1) coordinate (P1);
 \path (a.corner 2) coordinate (P2);
 \path (a.corner 3) coordinate (P3);
 \path (a.corner 4) coordinate (P4);
 \path (a.corner 5) coordinate (P5);
 \path (a.corner 6) coordinate (P6);
 \path (a.corner 7) coordinate (P7);
 \path (a.corner 8) coordinate (P8);
 \path (a.corner 9) coordinate (P9);
 \path (a.corner 10) coordinate (P10);
 \path (a.corner 11) coordinate (P11);
 \path (a.corner 12) coordinate (P12);

 \draw (P1) -- ($ (P1) !1! {-60}:(P2) $) -- (P2);
 \draw (P5) -- ($ (P5) !1! {-60}:(P6) $) -- (P6);

 \draw (P10) -- ($ (P10)!1! {-60}:(P11)$) -- (P11);
 \draw (P8) -- ($ (P8)!1!{-60}:(P9) $) -- (P9);

 \pgfmathsetmacro{\polyR}{3/2} 
 \pgfmathsetmacro{\radius}{\polyR*1.08} 

 \path (C) ++(73:\radius) coordinate (Q1);
 
 \path (C) ++(105:\radius) coordinate (Q2);
 \path (C) ++(125:\radius) coordinate (Q3);
 \path (C) ++(145:\radius) coordinate (Q3a);
 \path (C) ++(155:\radius) coordinate (Q4);
 \path (C) ++(175:\radius) coordinate (Q4a);
 \path (C) ++(195:\radius) coordinate (Q5);
 \path (C) ++(225:\radius) coordinate (Q6);
 \path (C) ++(242:\radius) coordinate (Q7);
 \path (C) ++(264:\radius) coordinate (Q7a);
 \path (C) ++(285:\radius) coordinate (Q8);
 \path (C) ++(315:\radius) coordinate (Q9);

 \path (C) ++(345:\radius) coordinate (Q10);

 \path (C) ++(15:\radius) coordinate (Q11);
 \path (C) ++(35:\radius) coordinate (Q12);
 \path (C) ++(55:\radius) coordinate (Q12a);

 \draw (P1) -- (Q1);
 \draw (P2) -- (Q2);
 \draw (P3) -- (Q3);
 \draw (P3) -- (Q3a);
 \draw (Q3) -- (Q3a);
 \draw (P4) -- (Q4);
 \draw (P4) -- (Q4a); 
 \draw (Q4) -- (Q4a); 
 \draw (P5) -- (Q5);
 \draw (P6) -- (Q6);
 \draw (P6) -- (Q6);
 \draw (P7) -- (Q7);
 \draw (P7) -- (Q7a);
 \draw (Q7) -- (Q7a);
 \draw (P9) -- (Q9);
 \draw (P8) -- (Q8);

 \draw (P10) -- (Q10);
 
 \draw (P11) -- (Q11);
 \draw (P12) -- (Q12);
 \draw (P12) -- (Q12a);
 \draw (Q12) -- (Q12a);

\pgfmathsetmacro{\polyR}{3/2}
\pgfmathsetmacro{\radius}{\polyR*0.85}

\node[scale=0.75] at ($(C)+(30:\radius)$) {5};
\node[scale=0.75] at ($(C)+(60:\radius)$) {14};
\node[scale=0.75] at ($(C)+(150:\radius)$) {14};
\node[scale=0.75] at ($(C)+(120:\radius)$) {5};
\node[scale=0.75] at ($(C)+(180:\radius)$) {5};
\node[scale=0.75] at ($(C)+(240:\radius)$) {14};
\node[scale=0.75] at ($(C)+(270:\radius)$) {5};
\node[scale=0.75] at ($(C)+(330:\radius)$) {14};

\end{tikzpicture}}
\subfigure[$D_3$]{\begin{tikzpicture}[line width=0.75pt, scale=1.3]

 \node[regular polygon, regular polygon sides=12, minimum size=3cm, draw] (a) {};
 \coordinate (C) at (a.center);
 \node at (C) {12};

 \path (a.corner 1) coordinate (P1);
 \path (a.corner 2) coordinate (P2);
 \path (a.corner 3) coordinate (P3);
 \path (a.corner 4) coordinate (P4);
 \path (a.corner 5) coordinate (P5);
 \path (a.corner 6) coordinate (P6);
 \path (a.corner 7) coordinate (P7);
 \path (a.corner 8) coordinate (P8);
 \path (a.corner 9) coordinate (P9);
 \path (a.corner 10) coordinate (P10);
 \path (a.corner 11) coordinate (P11);
 \path (a.corner 12) coordinate (P12);

 \draw (P1) -- ($ (P1) !1! {-60}:(P2) $) -- (P2);
 \draw (P5) -- ($ (P5) !1! {-60}:(P6) $) -- (P6);
 \draw (P7) -- ($ (P7) !1! {-60}:(P8) $) -- (P8);
 \draw (P11) -- ($ (P11)!1! {-60}:(P12)$) -- (P12);

 \pgfmathsetmacro{\polyR}{3/2} 
 \pgfmathsetmacro{\radius}{\polyR*1.08} 

 \path (C) ++(73:\radius) coordinate (Q1);
 
 \path (C) ++(105:\radius) coordinate (Q2);
 \path (C) ++(125:\radius) coordinate (Q3);
 \path (C) ++(145:\radius) coordinate (Q3a);
 \path (C) ++(155:\radius) coordinate (Q4);
 \path (C) ++(175:\radius) coordinate (Q4a);
 \path (C) ++(195:\radius) coordinate (Q5);
 \path (C) ++(225:\radius) coordinate (Q6);
 \path (C) ++(250:\radius) coordinate (Q7);
 \path (C) ++(285:\radius) coordinate (Q8);
 \path (C) ++(305:\radius) coordinate (Q9);
 \path (C) ++(325:\radius) coordinate (Q9a);
 \path (C) ++(335:\radius) coordinate (Q10);
 \path (C) ++(355:\radius) coordinate (Q10a);
 \path (C) ++(15:\radius) coordinate (Q11);
 \path (C) ++(45:\radius) coordinate (Q12);

 \draw (P1) -- (Q1);
 \draw (P2) -- (Q2);
 \draw (P3) -- (Q3);
 \draw (P3) -- (Q3a);
 \draw (Q3) -- (Q3a);
 \draw (P4) -- (Q4);
 \draw (P4) -- (Q4a); 
 \draw (Q4) -- (Q4a); 
 \draw (P5) -- (Q5);
 \draw (P6) -- (Q6);
 \draw (P7) -- (Q7);
 \draw (P10) -- (Q10);
 \draw (P11) -- (Q11);
 \draw (P12) -- (Q12);
 \draw (P9) -- (Q9);
 \draw (P9) -- (Q9a);
 \draw (Q9) -- (Q9a);
 \draw (P10) -- (Q10a);
 \draw (Q10) -- (Q10a);
 \draw (P8) -- (Q8);
 
\pgfmathsetmacro{\polyR}{3/2}
\pgfmathsetmacro{\radius}{\polyR*0.85}

\node[scale=0.75] at ($(C)+(0:\radius)$) {5};
\node[scale=0.75] at ($(C)+(60:\radius)$) {14};
\node[scale=0.75] at ($(C)+(150:\radius)$) {14};
\node[scale=0.75] at ($(C)+(120:\radius)$) {5};
\node[scale=0.75] at ($(C)+(180:\radius)$) {5};
\node[scale=0.75] at ($(C)+(240:\radius)$) {14};
\node[scale=0.75] at ($(C)+(300:\radius)$) {5};
\node[scale=0.75] at ($(C)+(330:\radius)$) {14};

\end{tikzpicture}}

\subfigure[$D_4$]{\begin{tikzpicture}[line width=0.75pt, scale=1.3]

 \node[regular polygon, regular polygon sides=12, minimum size=3cm, draw] (a) {};
 \coordinate (C) at (a.center);
 \node at (C) {12};

 \path (a.corner 1) coordinate (P1);
 \path (a.corner 2) coordinate (P2);
 \path (a.corner 3) coordinate (P3);
 \path (a.corner 4) coordinate (P4);
 \path (a.corner 5) coordinate (P5);
 \path (a.corner 6) coordinate (P6);
 \path (a.corner 7) coordinate (P7);
 \path (a.corner 8) coordinate (P8);
 \path (a.corner 9) coordinate (P9);
 \path (a.corner 10) coordinate (P10);
 \path (a.corner 11) coordinate (P11);
 \path (a.corner 12) coordinate (P12);

 \draw (P1) -- ($ (P1) !1! {-60}:(P2) $) -- (P2); 
 \draw (P4) -- ($ (P4) !1! {-60}:(P5) $) -- (P5); 
 \draw (P7) -- ($ (P7) !1! {-60}:(P8) $) -- (P8); 
 \draw (P10) -- ($ (P10) !1! {-60}:(P11)$) -- (P11); 

 \pgfmathsetmacro{\polyR}{3/2}
 \pgfmathsetmacro{\factor}{1.4} 
 \pgfmathsetmacro{\sf}{1/sqrt(3)}

 \path ($(C)!\factor!(P1)$) coordinate (Q1);
 \path ($(C)!\factor!(P2)$) coordinate (Q2);
 \path ($(C)!\factor!(P3)$) coordinate (Q3);
 \path ($(C)!\factor!(P4)$) coordinate (Q4);
 \path ($(C)!\factor!(P5)$) coordinate (Q5);
 \path ($(C)!\factor!(P6)$) coordinate (Q6);
 \path ($(C)!\factor!(P7)$) coordinate (Q7);
 \path ($(C)!\factor!(P8)$) coordinate (Q8);
 \path ($(C)!\factor!(P9)$) coordinate (Q9);
 \path ($(C)!\factor!(P10)$) coordinate (Q10);
 \path ($(C)!\factor!(P11)$) coordinate (Q11);
 \path ($(C)!\factor!(P12)$) coordinate (Q12);

 \path ($(Q3)!\sf!90:(P3)$) coordinate (B3a);
 \path ($(Q3)!\sf!-90:(P3)$) coordinate (B3b);
 \draw (P3) -- (B3a) -- (B3b) -- cycle;

 \path ($(Q6)!\sf!90:(P6)$) coordinate (B6a);
 \path ($(Q6)!\sf!-90:(P6)$) coordinate (B6b);
 \draw (P6) -- (B6a) -- (B6b) -- cycle;

 \path ($(Q9)!\sf!90:(P9)$) coordinate (B9a);
 \path ($(Q9)!\sf!-90:(P9)$) coordinate (B9b);
 \draw (P9) -- (B9a) -- (B9b) -- cycle;

 \path ($(Q12)!\sf!90:(P12)$) coordinate (B12a);
 \path ($(Q12)!\sf!-90:(P12)$) coordinate (B12b);
 \draw (P12) -- (B12a) -- (B12b) -- cycle;

 \pgfmathsetmacro{\polyR}{3/2} 
 \pgfmathsetmacro{\radius}{\polyR*1.08} 

 \path (C) ++(73:\radius) coordinate (Q1);
 \path (C) ++(105:\radius) coordinate (Q2);
 \path (C) ++(135:\radius) coordinate (Q3);
 \path (C) ++(165:\radius) coordinate (Q4);
 \path (C) ++(195:\radius) coordinate (Q5);
 \path (C) ++(225:\radius) coordinate (Q6);
 \path (C) ++(250:\radius) coordinate (Q7);
 \path (C) ++(285:\radius) coordinate (Q8);
 \path (C) ++(315:\radius) coordinate (Q9);
 \path (C) ++(345:\radius) coordinate (Q10);
 \path (C) ++(15:\radius) coordinate (Q11);
 \path (C) ++(45:\radius) coordinate (Q12);

 \draw (P1) -- (Q1);
 \draw (P2) -- (Q2);
 \draw (P5) -- (Q5);
 \draw (P4) -- (Q4);
 \draw (P7) -- (Q7);
 \draw (P8) -- (Q8);
 \draw (P10) -- (Q10);
 \draw (P11) -- (Q11);

\pgfmathsetmacro{\polyR}{3/2}
\pgfmathsetmacro{\radius}{\polyR*0.85}

\node[scale=0.75] at ($(C)+(60:\radius)$) {14};
\node[scale=0.75] at ($(C)+(150:\radius)$) {14};
\node[scale=0.75] at ($(C)+(120:\radius)$) {5};

\node[scale=0.75] at ($(C)+(240:\radius)$) {14};
\node[scale=0.75] at ($(C)+(300:\radius)$) {5};
\node[scale=0.75] at ($(C)+(330:\radius)$) {14};
\node[scale=0.75] at ($(C)+(210:\radius)$) {5};
\node[scale=0.75] at ($(C)+(30:\radius)$) {5};
\end{tikzpicture}}}
\captionof{figure}{Neighborhoods of $12$-gon}
\end{figure}

\begin{figure}[H] 
\centering
\resizebox{.8\linewidth}{!}{

\subfigure[$H_1$]{\begin{tikzpicture}[line width=0.75pt, scale=1]
\tikzstyle{ver}=[]
\tikzstyle{vert}=[circle, draw, fill=black!100, inner sep=0pt, minimum width=1pt]
\tikzstyle{vertex}=[circle, draw, fill=black!00, inner sep=0pt, minimum width=4pt]

 \node[regular polygon, regular polygon sides=14, minimum size=3cm, draw] (a) {};
 \coordinate (C) at (a.center);
 \node at (C) {14};

 \path (a.corner 1) coordinate (P1);
 \path (a.corner 2) coordinate (P2);
 \path (a.corner 3) coordinate (P3);
 \path (a.corner 4) coordinate (P4);
 \path (a.corner 5) coordinate (P5);
 \path (a.corner 6) coordinate (P6);
 \path (a.corner 7) coordinate (P7);
 \path (a.corner 8) coordinate (P8);
 \path (a.corner 9) coordinate (P9);
 \path (a.corner 10) coordinate (P10);
 \path (a.corner 11) coordinate (P11);
 \path (a.corner 12) coordinate (P12);
 \path (a.corner 13) coordinate (P13);
 \path (a.corner 14) coordinate (P14);

 \draw (P1) -- ($ (P1)!1!{-60}:(P2) $) -- (P2);
 \draw (P3) -- ($ (P3)!1!{-60}:(P4) $) -- (P4);
 \draw (P7) -- ($ (P7)!1!{-60}:(P8) $) -- (P8);
 \draw (P9) -- ($ (P9)!1!{-60}:(P10) $) -- (P10);
 \draw (P11) -- ($ (P11)!1!{-60}:(P12) $) -- (P12);
 \draw (P13) -- ($ (P13)!1!{-60}:(P14) $) -- (P14);

 \def\radius{2cm} 
\path (C) ++(25:\radius) coordinate (Q2);

 \path (C) ++(0:\radius) coordinate (Q1);
\path (C) ++(25:\radius) coordinate (Q2);
 \path (C) ++(50:\radius) coordinate (Q3);
 \path (C) ++(76:\radius) coordinate (Q4);
 \path (C) ++(103:\radius) coordinate (Q5);
 \path (C) ++(130:\radius) coordinate (Q6);
 \path (C) ++(155:\radius) coordinate (Q7);
 \path (C) ++(172:\radius) coordinate (Q8);
 \path (C) ++(190:\radius) coordinate (Q8a);
 \path (C) ++(215:\radius) coordinate (Q9);
 \path (C) ++(197:\radius) coordinate (Q9a);
 \path (C) ++(228:\radius) coordinate (Q10);
 \path (C) ++(257:\radius) coordinate (Q11);
 \path (C) ++(282:\radius) coordinate (Q12);
 \path (C) ++(309:\radius) coordinate (Q13);
 \path (C) ++(335:\radius) coordinate (Q14);
 \path (C) ++(360:\radius) coordinate (Q15);

 \draw (P13) --(Q2);
 \draw (P14) --(Q3);
 \draw (P1) --(Q4);
 \draw (P2) --(Q5);
 \draw (P3) --(Q6);
 \draw (P4) --(Q7);
 \draw (P5) --(Q8);
 \draw (P5) --(Q8a);
 \draw (Q8) --(Q8a);
 \draw (P6) --(Q9);
 \draw (P6) --(Q9a);

\draw (Q9) --(Q9a);
 \draw (P7) --(Q10);
 \draw (P8) --(Q11);
 \draw (P9) --(Q12);
 \draw (P10) --(Q13);
 \draw (P11) --(Q14);
 \draw (P12) --(Q15);
 
 \pgfmathsetmacro{\polyR}{4.5/2}
 \pgfmathsetmacro{\radius}{\polyR*0.75} 

\node[scale=0.75] at ($(C)+(10:\radius)$) {12};

\node[scale=0.75] at ($(C)+(62:\radius)$) {5};
\node[scale=0.75] at ($(C)+(118:\radius)$) {12};
\node[scale=0.75] at ($(C)+(165:\radius)$) {5};
\node[scale=0.75] at ($(C)+(192:\radius)$) {12};
\node[scale=0.75] at ($(C)+(220:\radius)$) {5};

\node[scale=0.75] at ($(C)+(270:\radius)$) {12};
 
\node[scale=0.75] at ($(C)+(325:\radius)$) {5};

\end{tikzpicture}}

\label{fig:nbd14} 
\subfigure[$H_2$]{

\begin{tikzpicture}[line width=0.75pt, scale=1]
\tikzstyle{ver}=[]
\tikzstyle{vert}=[circle, draw, fill=black!100, inner sep=0pt, minimum width=1pt]
\tikzstyle{vertex}=[circle, draw, fill=black!00, inner sep=0pt, minimum width=4pt]

 \node[regular polygon, regular polygon sides=14, minimum size=3cm, draw] (a) {};
 \coordinate (C) at (a.center);
 \node at (C) {14};

 \path (a.corner 1) coordinate (P1);
 \path (a.corner 2) coordinate (P2);
 \path (a.corner 3) coordinate (P3);
 \path (a.corner 4) coordinate (P4);
 \path (a.corner 5) coordinate (P5);
 \path (a.corner 6) coordinate (P6);
 \path (a.corner 7) coordinate (P7);
 \path (a.corner 8) coordinate (P8);
 \path (a.corner 9) coordinate (P9);
 \path (a.corner 10) coordinate (P10);
 \path (a.corner 11) coordinate (P11);
 \path (a.corner 12) coordinate (P12);
 \path (a.corner 13) coordinate (P13);
 \path (a.corner 14) coordinate (P14);

 \draw (P1) -- ($ (P1)!1!{-60}:(P2) $) -- (P2);

 \draw (P4) -- ($ (P4)!1!{-60}:(P5) $) -- (P5);
 
 \draw (P7) -- ($ (P7)!1!{-60}:(P8) $) -- (P8);
 \draw (P9) -- ($ (P9)!1!{-60}:(P10) $) -- (P10);
 \draw (P11) -- ($ (P11)!1!{-60}:(P12) $) -- (P12);
 \draw (P13) -- ($ (P13)!1!{-60}:(P14) $) -- (P14);

 \def\radius{2 cm} 
\path (C) ++(25:\radius) coordinate (Q2);

 \path (C) ++(0:\radius) coordinate (Q1);
\path (C) ++(25:\radius) coordinate (Q2);
 \path (C) ++(50:\radius) coordinate (Q3);
 \path (C) ++(76:\radius) coordinate (Q4);
 \path (C) ++(103:\radius) coordinate (Q5);
 \path (C) ++(121:\radius) coordinate (Q6);
 \path (C) ++(140:\radius) coordinate (Q7);
 \path (C) ++(156:\radius) coordinate (Q8b);
 \path (C) ++(180:\radius) coordinate (Q8);
 \path (C) ++(190:\radius) coordinate (Q8a);
 \path (C) ++(215:\radius) coordinate (Q9);
 \path (C) ++(197:\radius) coordinate (Q9a);
 \path (C) ++(228:\radius) coordinate (Q10);
 \path (C) ++(257:\radius) coordinate (Q11);
 \path (C) ++(282:\radius) coordinate (Q12);
 \path (C) ++(309:\radius) coordinate (Q13);
 \path (C) ++(335:\radius) coordinate (Q14);
 \path (C) ++(360:\radius) coordinate (Q15);

 \draw (P13) --(Q2);
 \draw (P14) --(Q3);
 
 \draw (P1) --(Q4);
 \draw (P2) --(Q5);
 \draw (P3) --(Q6);
 \draw (P3) --(Q7);
 \draw (Q6) --(Q7);
 \draw (P4) --(Q8b);
 
 \draw (P5) --(Q8);
 \draw (P6) --(Q9);
 \draw (P6) --(Q9a);

\draw (Q9) --(Q9a);
 \draw (P7) --(Q10);
 \draw (P8) --(Q11);
 \draw (P9) --(Q12);
 \draw (P10) --(Q13);
 \draw (P11) --(Q14);
 \draw (P12) --(Q15);
 
 \pgfmathsetmacro{\polyR}{4.5/2}
 \pgfmathsetmacro{\radius}{\polyR*0.75} 

\node[scale=0.75] at ($(C)+(10:\radius)$) {12};

\node[scale=0.75] at ($(C)+(62:\radius)$) {5};

\node[scale=0.75] at ($(C)+(115:\radius)$) {12};
\node[scale=0.75] at ($(C)+(146:\radius)$) {5};

\node[scale=0.75] at ($(C)+(192:\radius)$) {12};
\node[scale=0.75] at ($(C)+(220:\radius)$) {5};

\node[scale=0.75] at ($(C)+(270:\radius)$) {12};
 
\node[scale=0.75] at ($(C)+(325:\radius)$) {5};
 
\end{tikzpicture}}
\label{nbd142}
\subfigure[$H_3$]{
\begin{tikzpicture}[line width=0.75pt, scale=1]
\tikzstyle{ver}=[]
\tikzstyle{vert}=[circle, draw, fill=black!100, inner sep=0pt, minimum width=1pt]
\tikzstyle{vertex}=[circle, draw, fill=black!00, inner sep=0pt, minimum width=4pt]

 \node[regular polygon, regular polygon sides=14, minimum size=3cm, draw] (a) {};
 \coordinate (C) at (a.center);
 \node at (C) {14};

 \path (a.corner 1) coordinate (P1);
 \path (a.corner 2) coordinate (P2);
 \path (a.corner 3) coordinate (P3);
 \path (a.corner 4) coordinate (P4);
 \path (a.corner 5) coordinate (P5);
 \path (a.corner 6) coordinate (P6);
 \path (a.corner 7) coordinate (P7);
 \path (a.corner 8) coordinate (P8);
 \path (a.corner 9) coordinate (P9);
 \path (a.corner 10) coordinate (P10);
 \path (a.corner 11) coordinate (P11);
 \path (a.corner 12) coordinate (P12);
 \path (a.corner 13) coordinate (P13);
 \path (a.corner 14) coordinate (P14);


 \draw (P4) -- ($ (P4)!1!{-60}:(P5) $) -- (P5);
 
 \draw (P7) -- ($ (P7)!1!{-60}:(P8) $) -- (P8);
 \draw (P9) -- ($ (P9)!1!{-60}:(P10) $) -- (P10);
 \draw (P11) -- ($ (P11)!1!{-60}:(P12) $) -- (P12);
 \draw (P13) -- ($ (P13)!1!{-60}:(P14) $) -- (P14);
 
 \draw (P2) -- ($ (P2)!1!{-60}:(P3) $) -- (P3);

 \def\radius{2 cm} 
\path (C) ++(25:\radius) coordinate (Q2);

 \path (C) ++(0:\radius) coordinate (Q1);
\path (C) ++(25:\radius) coordinate (Q2);
 \path (C) ++(50:\radius) coordinate (Q3);
 \path (C) ++(66:\radius) coordinate (Q4);
 \path (C) ++(86:\radius) coordinate (Q4a);
 \path (C) ++(103:\radius) coordinate (Q5);
 \path (C) ++(121:\radius) coordinate (Q6);
 \path (C) ++(132:\radius) coordinate (Q7);
 \path (C) ++(156:\radius) coordinate (Q8b);
 \path (C) ++(180:\radius) coordinate (Q8);
 \path (C) ++(190:\radius) coordinate (Q8a);
 \path (C) ++(215:\radius) coordinate (Q9);
 \path (C) ++(197:\radius) coordinate (Q9a);
 \path (C) ++(228:\radius) coordinate (Q10);
 \path (C) ++(257:\radius) coordinate (Q11);
 \path (C) ++(282:\radius) coordinate (Q12);
 \path (C) ++(309:\radius) coordinate (Q13);
 \path (C) ++(335:\radius) coordinate (Q14);
 \path (C) ++(360:\radius) coordinate (Q15);

 \draw (P13) --(Q2);
 \draw (P14) --(Q3);
 \draw (P1) --(Q4a);
 \draw (P1) --(Q4);
 \draw (Q4) --(Q4a);
 \draw (P2) --(Q5);

 \draw (P3) --(Q7);

 \draw (P4) --(Q8b);
 
 \draw (P5) --(Q8);
 \draw (P6) --(Q9);
 \draw (P6) --(Q9a);

\draw (Q9) --(Q9a);
 \draw (P7) --(Q10);
 \draw (P8) --(Q11);
 \draw (P9) --(Q12);
 \draw (P10) --(Q13);
 \draw (P11) --(Q14);
 \draw (P12) --(Q15);
 
 \pgfmathsetmacro{\polyR}{4.5/2}
 \pgfmathsetmacro{\radius}{\polyR*0.75} 

\node[scale=0.75] at ($(C)+(11:\radius)$) {5};

\node[scale=0.75] at ($(C)+(60:\radius)$) {12};
\node[scale=0.75] at ($(C)+(91:\radius)$) {5};

\node[scale=0.75] at ($(C)+(145:\radius)$) {12};

\node[scale=0.75] at ($(C)+(192:\radius)$) {5};
\node[scale=0.75] at ($(C)+(220:\radius)$) {12};

\node[scale=0.75] at ($(C)+(270:\radius)$) {5};
 
\node[scale=0.75] at ($(C)+(325:\radius)$) {12};
 
\end{tikzpicture}}
\subfigure[$H_4$]{\begin{tikzpicture}[line width=0.75pt, scale=1]
\tikzstyle{ver}=[]
\tikzstyle{vert}=[circle, draw, fill=black!100, inner sep=0pt, minimum width=1pt]
\tikzstyle{vertex}=[circle, draw, fill=black!00, inner sep=0pt, minimum width=4pt]

 \node[regular polygon, regular polygon sides=14, minimum size=3cm, draw] (a) {};
 \coordinate (C) at (a.center);
 \node at (C) {14};

 \path (a.corner 1) coordinate (P1);
 \path (a.corner 2) coordinate (P2);
 \path (a.corner 3) coordinate (P3);
 \path (a.corner 4) coordinate (P4);
 \path (a.corner 5) coordinate (P5);
 \path (a.corner 6) coordinate (P6);
 \path (a.corner 7) coordinate (P7);
 \path (a.corner 8) coordinate (P8);
 \path (a.corner 9) coordinate (P9);
 \path (a.corner 10) coordinate (P10);
 \path (a.corner 11) coordinate (P11);
 \path (a.corner 12) coordinate (P12);
 \path (a.corner 13) coordinate (P13);
 \path (a.corner 14) coordinate (P14);


 \draw (P4) -- ($ (P4)!1!{-60}:(P5) $) -- (P5);
 
 \draw (P7) -- ($ (P7)!1!{-60}:(P8) $) -- (P8);
 \draw (P9) -- ($ (P9)!1!{-60}:(P10) $) -- (P10);
 \draw (P11) -- ($ (P11)!1!{-60}:(P12) $) -- (P12);

 \draw (P14) -- ($ (P14)!1!{-60}:(P1) $) -- (P1);
 
 \draw (P2) -- ($ (P2)!1!{-60}:(P3) $) -- (P3);

 \def\radius{2 cm} 
\path (C) ++(25:\radius) coordinate (Q2);

 \path (C) ++(0:\radius) coordinate (Q1);
\path (C) ++(18:\radius) coordinate (Q2);
\path (C) ++(35:\radius) coordinate (Q2a);
 \path (C) ++(50:\radius) coordinate (Q3);
 \path (C) ++(75:\radius) coordinate (Q4);
 
 \path (C) ++(103:\radius) coordinate (Q5);
 \path (C) ++(121:\radius) coordinate (Q6);
 \path (C) ++(132:\radius) coordinate (Q7);
 \path (C) ++(156:\radius) coordinate (Q8b);
 \path (C) ++(180:\radius) coordinate (Q8);
 \path (C) ++(190:\radius) coordinate (Q8a);
 \path (C) ++(215:\radius) coordinate (Q9);
 \path (C) ++(197:\radius) coordinate (Q9a);
 \path (C) ++(228:\radius) coordinate (Q10);
 \path (C) ++(257:\radius) coordinate (Q11);
 \path (C) ++(282:\radius) coordinate (Q12);
 \path (C) ++(309:\radius) coordinate (Q13);
 \path (C) ++(335:\radius) coordinate (Q14);
 \path (C) ++(360:\radius) coordinate (Q15);

 \draw (P13) --(Q2a);
 \draw (P13) --(Q2);
 \draw (Q2) --(Q2a);
 
 \draw (P14) --(Q3);
 
 \draw (P1) --(Q4);

 \draw (P2) --(Q5);

 \draw (P3) --(Q7);

 \draw (P4) --(Q8b);
 
 \draw (P5) --(Q8);
 \draw (P6) --(Q9);
 \draw (P6) --(Q9a);

\draw (Q9) --(Q9a);
 \draw (P7) --(Q10);
 \draw (P8) --(Q11);
 \draw (P9) --(Q12);
 \draw (P10) --(Q13);
 \draw (P11) --(Q14);
 \draw (P12) --(Q15);

 \pgfmathsetmacro{\polyR}{4.5/2}
 \pgfmathsetmacro{\radius}{\polyR*0.75} 

\node[scale=0.75] at ($(C)+(11:\radius)$) {12};
\node[scale=0.75] at ($(C)+(40:\radius)$) {5};
\node[scale=0.75] at ($(C)+(91:\radius)$) {12};

\node[scale=0.75] at ($(C)+(145:\radius)$) {5};

\node[scale=0.75] at ($(C)+(192:\radius)$) {12};
\node[scale=0.75] at ($(C)+(220:\radius)$) {5};

\node[scale=0.75] at ($(C)+(270:\radius)$) {12};
 
\node[scale=0.75] at ($(C)+(325:\radius)$) {5};
 
\end{tikzpicture}}}

\captionof{figure}{$\mathcal{N}$: Neighborhoods of $14$-gon}
\end{figure}

Let $\mathcal{N}$ denote the set of all neighborhood of $14$-gon.

\begin{definition}[Transition Matrix / Adjacency Matrix]
To quantify the connectivity of the tiling space, we treat the distinct face neighborhood types as states in a symbolic dynamical system. We define the transition matrix
\[
M = (M_{ij})_{1 \le i,j \le m},
\]
where $m$ is the number of distinct admissible neighborhood types. An entry $M_{ij}$ represents the number of distinct ways a neighborhood of type $N_i$ can be joined along a common edge to a neighborhood of type $N_j$ such that the resulting configuration satisfies the local consistency conditions A1--A2.
\end{definition}

By a case-by-case verification of all possible neighborhood-adjacencies, constrained by Observations \ref{A1}--\ref{A2}, yields the following transition matrix $M$:

\[M = \quad 
\begin{array}{c|cccccccccc}
 & \mathrm{T} & \mathrm{P} & \mathrm{D1} & \mathrm{D2} & \mathrm{D3} & \mathrm{D4} & \mathrm{H1} & \mathrm{H2} & \mathrm{H3} & \mathrm{H4} \\
\hline
\mathrm{T} & 0 & 1 & 1 & 2 & 1 & 1 & 3 & 6 & 3 & 3 \\
\mathrm{P} & 1 & 0 & 1 & 2 & 1 & 1 & 2 & 4 & 2 & 2 \\
\mathrm{D1} & 1 & 1 & 0 & 0 & 0 & 0 & 1 & 0 & 0 & 0 \\
\mathrm{D2} & 2 & 2 & 0 & 0 & 0 & 0 & 3 & 2 & 2 & 2 \\
\mathrm{D3} & 1 & 1 & 0 & 0 & 0 & 0 & 3 & 1 & 1 & 1 \\
\mathrm{D4} & 1 & 1 & 0 & 0 & 0 & 0 & 0 & 1 & 1 & 1 \\
\mathrm{H1} & 3 & 2 & 1 & 3 & 3 & 0 & 0 & 0 & 0 & 0 \\
\mathrm{H2} & 6 & 4 & 0& 2 & 1 & 1 & 0 & 0 & 0 & 0\\
\mathrm{H3} & 3 & 2 & 0& 2 & 1 & 1 & 0 & 0 & 0 & 0 \\
\mathrm{H4} & 3 & 2 & 0& 2 & 1 & 1 & 0 & 0 & 0 & 0 \\
\end{array}
\]

While an entry $M_{ij} > 1$ indicates multiple local orientations for joining neighborhood $N_i$ to $N_j$, our Extension Lemma (Lemma \ref{extn}) establishes that for every such transition, there exists at least one choice of orientation that is globally consistent. Consequently, while arbitrary choices may lead to configurations that cannot be completed (dead ends), the transition matrix $M$ robustly describes the skeleton of a non-empty, infinite tiling space.
\subsubsection{Cohomology of the Anderson-Putnam Approximant for Type $[3, 5, 12, 14]$}
Let $\Omega (3, 5, 12, 14)$ denote the tiling space of regular polygons of sizes $\{3, 5, 12, 14\}$ so that they form a fan around a vertex.
The transition matrix allows us to compute the cohomology of the zeroth Anderson-Putnam approximant, $\Gamma_0$, associated with the tiling space $\Omega (3, 5, 12, 14)$. This approximant $\Gamma_0$  is constructed as a 2-dimensional CW-complex as follows (\cite{KP00, S08}):
\begin{itemize}
\item \textbf{0-cells}: The set $\mathcal{N}$ consisting of the 10 dual vertex-stars, forming the basis $\langle e_i : i=1,\dots,10\rangle$ of $C_0 = \mathbb{Z}^{10}$.
\item \textbf{1-cells}: Directed edges defined by the transition matrix $M_{\text{red}}$, $ \langle e_{i\to j}^{(k)} : 1\le i,j\le 10,\ 1\le k\le M_{ij}\rangle $ forming a basis of $C_1 = \mathbb{Z}^{110}$.
\item \textbf{2-cells}: Quadrilateral cycles corresponding to dual tiles (primal vertex-stars), grouped into sets. The set of matrix-admissible 2-cell cycles 
\[
\begin{aligned}
\mathcal{S}_1 &= \{(T, P, D_i, H_j) \mid i \in \{2, 3, 4\}, j \in \{1, 2, 3, 4\}, (i=4 \Rightarrow j \neq 1)\}, \\
\mathcal{S}_2 &= \{(T, P, H_j, D_i) \mid i \in \{1, 2, 3, 4\}, j \in \{1, 2, 3, 4\}, (i=1 \Rightarrow j=1), (i=4 \Rightarrow j \neq 1)\}, \\
\mathcal{S}_3 &= \{(T, D_i, P, H_j) \mid i, j \in \{1, 2, 3, 4\}\},
\end{aligned}
\]
One can verify that all the cycles in $\mathcal{S}_1$, $\mathcal{S}_2$ and $\mathcal{S}_3$ except $5$ sequences in $\mathcal{S}_3$ are realizable as valid cycles. With this \[
|\mathcal{S}_1| = 99, \quad |\mathcal{S}_2| = 68, \quad |\mathcal{S}_3| = 279, \quad \text{Total }= 446.
\]

\[C_2=\langle f_\alpha : \alpha=1,\dots,446\rangle,
\]
where each $f_\alpha$ is a labelled oriented $4$-cycle belonging to one of the sets $\mathcal{S}_1, \mathcal{S}_2$ and $\mathcal{S}_3$. 
 
\end{itemize}
The cochain complex is 
\[
0 \to C^0 \xrightarrow{\delta^0} C^1 \xrightarrow{\delta^1} C^2 \to 0, \quad C^0 = \mathbb{Z}^{10}, \quad C^1 = \mathbb{Z}^{110}, \quad C^2 = \mathbb{Z}^{446},
\]
where $\delta^i = (\partial_{i+1})^*$ described below. \\

On basis elements,
\[
\boxed{\ \delta_1\big(e_{i\to j}^{(k)}\big)=e_j-e_i\ }.
\]
Each generator $f_\alpha$ corresponds to a labelled oriented 4-cycle
\[
(i_1\overset{(k_1)}\to i_2\overset{(k_2)}\to i_3\overset{(k_3)}\to i_4\overset{(k_4)}\to i_1),
\]
where the superscript $(k_s)$ picks a specific parallel edge copy among
the $M_{i_s,i_{s+1}}$ choices. With coherent orientation the boundary is
the (signed) sum of the four oriented edges in order:
\[
\boxed{\ \delta_2(f_\alpha)
= e_{i_1\to i_2}^{(k_1)} + e_{i_2\to i_3}^{(k_2)} + e_{i_3\to i_4}^{(k_3)}
 + e_{i_4\to i_1}^{(k_4)}\ }.
\]
A computer assisted computation yields the cohomology $ H^i(\Gamma_0) = \operatorname{ker} \delta^i / \operatorname{im} \delta^{i-1}$ :
 \[ H^0(\Gamma_0) = \mathbb{Z}^1 \quad H^1(\Gamma_0) = \mathbb{Z}^{36}, \quad H^2(\Gamma_0)= \mathbb{Z}^{381} \quad H^k(\Gamma_0) = 0 \quad \text{for} \; k > 2 \]

\begin{remark}[Cohomology of the Anderson-Putnam Approximant]
The complex $\Gamma_0$ constructed above represents the zeroth approximant in the Anderson-Putnam inverse limit sequence for the tiling space $\Omega([3, 5, 12, 14])$. The ranks of the resulting cohomology groups, $H^1(\Gamma_0) = \mathbb{Z}^{36}$ and $H^2(\Gamma_0) = \mathbb{Z}^{381}$, provide explicit combinatorial bounds on the \v{C}ech cohomology of the continuous hull. Geometrically, the generators of $H^1(\Gamma_0)$ correspond to the independent degrees of freedom for shifting the hypercyclic $(T, P)$ chains (as constructed in Proposition 3.2) without violating local consistency. The rank of $H^2(\Gamma_0)$ formally enumerates the fundamental 2-cycles in the local transition graph, reflecting the heavily branched nature of the matching rules. Under the inverse limit $\Omega \cong \varprojlim \Gamma_n$, these values serve as the initial topological invariants for the dynamical system associated with this weakly aperiodic protoset.
\end{remark}

\end{document}